\documentclass[a4paper,onecolumn,11pt,unpublished,allowtoday]{quantumarticle}
\pdfoutput=1

\makeatletter
\usepackage[utf8]{inputenc}
\usepackage[english]{babel}
\usepackage[T1]{fontenc}
\usepackage{amsmath}
\usepackage{amsthm}
\usepackage{amssymb}
\usepackage{enumitem}
\usepackage{array}
\usepackage{xcolor}
\usepackage{tikz}
\usepackage{tikz-cd}
\usepackage[skins,breakable]{tcolorbox}
\usepackage{colortbl}
\usepackage{float}
\usepackage[mathcal]{euscript}

\definecolor{deepblue}{RGB}{47,62,85}
\definecolor{midblue}{RGB}{65,99,137}
\definecolor{urlpurple}{RGB}{98,74,131}
\definecolor{palepurple}{RGB}{236,230,246}
\colorlet{lightblue}{palepurple!45!white}
\colorlet{keyframe}{urlpurple!65!white}
\colorlet{keyfill}{palepurple!28!white}

\usepackage{hyperref}
\hypersetup{colorlinks=true, linkcolor=deepblue, citecolor=midblue, urlcolor=urlpurple,
  pdftitle={Classification of bipartite observables I: stable and uniformly stable observables},
  pdfauthor={Eduardo Due\~nez and Jos\'e Iovino}}

\RequirePackage{amssymb}
\RequirePackage{mathrsfs}

\newcommand{\CC}{\mathbb{C}}
\newcommand{\EE}{\mathbb{E}}

\newcommand{\NN}{\mathbb{N}}

\newcommand{\RR}{\mathbb{R}}

\newcommand{\cC}{\mathcal{C}}

\newcommand{\cH}{\mathcal{H}}

\newcommand{\cM}{\mathcal{M}}

\newcommand{\cO}{\mathcal{O}}

\newcommand{\cS}{\mathcal{S}}
\newcommand{\cT}{\mathcal{T}}
\newcommand{\cU}{\mathcal{U}}
\newcommand{\cV}{\mathcal{V}}
\newcommand{\cW}{\mathcal{W}}

\newcommand{\fD}{\mathfrak{D}}

\makeatletter
\RequirePackage{amsmath}
\RequirePackage{amssymb}
\RequirePackage{mathtools}

\DeclareMathOperator{\Tr}{Tr}

\newcommand{\ee}{\mathrm{e}}
\newcommand{\oA}{\otimes_{\mathsf{A}}}
\newcommand{\oB}{\otimes_{\mathsf{B}}}

\providecommand{\cH}{\mathcal{H}}

\makeatletter
\RequirePackage{amsmath}

\makeatletter
\def\di@Ind#1#2{#1\setbox0=\hbox{$#1x$}\kern\wd0\hbox to 0pt{\hss$#1\mid$\hss}%
  \smash{\lower.9\ht0\hbox to 0pt{\hss$#1\smile$\hss}}\kern\wd0}
\newcommand{\ind}{\mathrel{\mathpalette\di@Ind{}}}
\makeatother

\makeatletter
\RequirePackage[nosort]{cleveref}

\newcommand{\di@crefnames}[3]{\crefname{#1}{#2}{#3}\Crefname{#1}{#2}{#3}}
\di@crefnames{section}{Section}{Sections}
\di@crefnames{subsection}{Section}{Sections}
\di@crefnames{subsubsection}{Section}{Sections}
\di@crefnames{chapter}{Chapter}{Chapters}
\di@crefnames{part}{Part}{Parts}
\di@crefnames{appendix}{Appendix}{Appendices}
\di@crefnames{table}{Table}{Tables}
\di@crefnames{figure}{Figure}{Figures}
\di@crefnames{theorem}{Theorem}{Theorems}
\di@crefnames{keytheorem}{Theorem}{Theorems}
\di@crefnames{proposition}{Proposition}{Propositions}
\di@crefnames{corollary}{Corollary}{Corollaries}
\di@crefnames{lemma}{Lemma}{Lemmas}
\di@crefnames{fact}{Fact}{Facts}
\di@crefnames{conjecture}{Conjecture}{Conjectures}
\di@crefnames{definition}{Definition}{Definitions}
\di@crefnames{example}{Example}{Examples}
\di@crefnames{remark}{Remark}{Remarks}
\di@crefnames{question}{Question}{Questions}
\di@crefnames{axiom}{Axiom}{Axioms}
\di@crefnames{notation}{Notation}{Notations}
\di@crefnames{exercise}{Exercise}{Exercises}

\AtBeginDocument{%
}
\makeatletter
\def\@currname{di-theorem-rudin}\def\@curroptions{}
\RequirePackage{amsthm}
\RequirePackage{aliascnt}

\def\di@rudin@parent{section}
\relax

\newtheoremstyle{rudin}%
  {\topsep}{\topsep}{\itshape}{}{\bfseries}{.}{ }%
  {\thmname{#1}\thmnumber{ #2}\thmnote{ (#3)}}
\newtheoremstyle{rudindef}%
  {\topsep}{\topsep}{\upshape}{}{\bfseries}{.}{ }%
  {\thmname{#1}\thmnumber{ #2}\thmnote{ (#3)}}

\theoremstyle{rudin}
\ifx\di@rudin@parent\@empty
  \newtheorem{theorem}{Theorem}
\else
  \newtheorem{theorem}{Theorem}[\di@rudin@parent]
\fi
\newcommand{\DiSharedTheorem}[2]{%
  \newaliascnt{#1}{theorem}%
  \newtheorem{#1}[#1]{#2}%
  \aliascntresetthe{#1}%
}
\DiSharedTheorem{proposition}{Proposition}
\DiSharedTheorem{corollary}{Corollary}
\DiSharedTheorem{lemma}{Lemma}
\DiSharedTheorem{fact}{Fact}
\theoremstyle{rudindef}
\DiSharedTheorem{definition}{Definition}
\DiSharedTheorem{example}{Example}
\DiSharedTheorem{remark}{Remark}
\DiSharedTheorem{question}{Question}
\makeatletter

\theoremstyle{rudin}
\DiSharedTheorem{keytheorem}{Theorem}
\tcolorboxenvironment{keytheorem}{%
  colback=keyfill, colframe=keyframe, boxrule=0.4pt, arc=1pt,
  left=6pt, right=6pt, top=2pt, bottom=2pt,
  before skip=10pt, after skip=10pt}

\theoremstyle{rudindef}

\newtcolorbox{goalbox}[1]{breakable, colback=keyfill, colframe=keyframe, boxrule=0.4pt,
  arc=1pt, left=6pt, right=6pt, top=4pt, bottom=4pt,
  before skip=10pt, after skip=10pt,
  before upper={\textbf{#1.}\ }}

\numberwithin{figure}{section}

\crefname{enumi}{Question}{Questions}
\Crefname{enumi}{Question}{Questions}

\newcommand{\cat}[2]{#1{}^\frown #2}
\newcommand{\seg}{\sqsubseteq}
\newcommand{\ulimit}[2]{\lim_{#1,#2}}
\newcommand{\pos}{\operatorname{pos}}
\newcommand{\mist}{\operatorname{mist}}
\newcommand{\gr}{\operatorname{gr}}
\makeatother

\title{Classification of bipartite observables I: stable and uniformly stable observables}
\author{Eduardo Due\~nez}
\email{Eduardo.Duenez@utsa.edu}
\affiliation{Department of Mathematics, The University of Texas at San Antonio, One UTSA Circle, San Antonio, TX 78249, USA}
\author{Jos\'e Iovino}
\email{jose.iovino@utsa.edu}
\affiliation{Department of Mathematics, The University of Texas at San Antonio, One UTSA Circle, San Antonio, TX 78249, USA}
\keywords{Order of idealization, bipartite observables,
tensor products of states, swap test, Hong-Ou-Mandel interference,
simulation by mixtures, independent product, double limit property, weak compactness, Arens regularity, finitely additive measures, Harrington’s lemma, model-theoretic stability, decision trees, Littlestone dimension, ultrapowers}

\begin{document}
{\renewcommand{\thefootnote}{}\footnotetext{This version: \today. The most recent version of this paper is available at \url{https://utsa-quantum-math.github.io/resources/bipartite-observables-1-stability/}; it may supersede the version deposited on arXiv.}}

\begin{abstract}
  This is the first of three papers that read the dividing lines of Shelah's classification theory as properties of bipartite quantum observables, through their expectation functions on product states.
  Here we study stability: the independence of the expectation from the order in which the preparations of the two systems are idealized.
  We show that stability is characterized by weak compactness, by factorization through a reflexive Banach space, and by the uniform simulation of idealized preparations by finite mixtures of actual ones.
  A uniform version of stability, which bounds at every margin the finite staircases of responses, governs adaptive discrimination and the online prediction of thresholded expectations;
  for the swap test, Hong--Ou--Mandel coincidences and discrete equality tests, it yields bounds independent of dimension.
\end{abstract}

\maketitle

{\small
  \noindent\textbf{Mathematics Subject Classification (2020).}
  Primary 81P15, 81P16, 47B07, 03C45;
  Secondary 03C20, 03C66, 03C98, 28A35, 46B08, 46B10, 46B50, 46L06, 46L30, 46N50, 68Q11, 68Q32, 81P10, 81P18, 81P45.

  \smallskip
  \noindent\textbf{Keywords.}
  Order of idealization, bipartite observables, tensor products of states,
  swap test, Hong--Ou--Mandel interference, simulation by mixtures,
  independent product, double limit property, weak compactness, Arens regularity, finitely additive measures, Harrington’s lemma, model-theoretic stability, decision trees, Littlestone dimension, ultrapowers.
  \par}
\medskip

\setcounter{tocdepth}{2}
\tableofcontents

%#
%## SECTION: Introduction: one property, many languages
%#
\section{Introduction: one property, many languages}\label{sec:intro}

\noindent Idealized limits, such as infinite energy, vanishing bandwidth or the thermodynamic limit, in which particle number and volume grow without bound at fixed density, are familiar in physics, and their order can matter, as it does for the thermodynamic and vanishing-field limits in spontaneous symmetry breaking.
We study when two independently prepared quantum systems can be idealized in either order without changing the expectation of an observable of the joint system, regarded as a function of the two preparations.
We show that this order independence is a form of model-theoretic stability, characterized analytically by weak compactness, geometrically by factorization through a reflexive Banach space, and operationally by the simulation of idealized preparations with finite mixtures of actual ones.
A uniform version of stability governs, in turn, how many preparations adaptive queries of expectation values can tell apart, and how many mistakes are made in predicting on which side of a threshold an expectation lies.

Let $H_1$ and $H_2$ be complex separable Hilbert spaces describing the two systems, which we call Alice’s and Bob’s.
We write $\fD(H_1)$ and $\fD(H_2)$ for their sets of density operators, the positive operators of trace one that describe their actual states, or preparations.
An independent preparation of the joint system is represented by $\rho\otimes\sigma$, with $\rho\in\fD(H_1)$ and $\sigma\in\fD(H_2)$.
For a bounded operator $C$ on the Hilbert-space tensor product $H_1\otimes H_2$, we study the expectation function
\[
  \varphi_C(\rho,\sigma)=\Tr\bigl((\rho\otimes\sigma)C\bigr).
\]
An \emph{observable} is a self-adjoint bounded operator and represents a bounded measurable quantity;
for an observable $C$, the expectation $\varphi_C$ is real-valued and lies in $[-\|C\|,\|C\|]$.
We also admit arbitrary bounded operators, whose two \emph{hermitian parts} $\tfrac12(C+C^*)$ and $\tfrac1{2i}(C-C^*)$ are observables with expectations equal to the real and the imaginary part of $\varphi_C$, respectively.
When $C$ is an orthogonal projection, it represents a yes/no test, and $\varphi_C(\rho,\sigma)$ is the probability of a yes answer.

Asymptotic regimes are described by sequences of preparations:
states of increasing energy, outputs of circuits of increasing depth, single-photon states of increasingly narrow bandwidth.
Such sequences need not converge to a density operator:
if Alice’s system is an oscillator, for instance, the sequence of its energy eigenstates has no limit among density operators.
In the weak$^*$ topology, in which states converge when all their expectations converge, the limit points of this sequence are states of the algebra
$B(H_1)$ of bounded operators on $H_1$, that is, positive linear functionals on $B(H_1)$ that take the value one at the identity, and none of them is given by a density operator.
We call a state that is not given by a density operator \emph{idealized} (see \cref{sec:setting});
idealized states play the role that plane waves and position eigenstates play in textbook quantum mechanics.

Limits of sequences of preparations can always be taken along an ultrafilter.
Recall that, for a nonprincipal ultrafilter $\cU$ on $\NN$, every bounded sequence $(a_k)_{k\in\NN}$ of complex numbers has a unique ultralimit $\ulimit{k}{\cU}a_k$, the complex number $a$ with $\{k:|a_k-a|<\varepsilon\}\in\cU$ for every $\varepsilon>0$;
it is the ordinary limit whenever the latter exists.
For a sequence $(\rho_k)_{k\in\NN}$ in $\fD(H_1)$, the state $A\mapsto\ulimit{k}{\cU}\Tr(\rho_kA)$ of $B(H_1)$ is the \emph{ultralimit state} $\ulimit{k}{\cU}\rho_k$;
for the energy eigenstates of an oscillator, it is an idealized state that assigns probability zero to every finite set of energy levels.

Idealizing both systems requires two limits, which can be taken in either order.
We call an observable $C$ \emph{stable} if for all sequences $(\rho_k)_{k\in\NN}$ in $\fD(H_1)$ and $(\sigma_l)_{l\in\NN}$ in $\fD(H_2)$ we have
\begin{equation}\label{eq:double-limit-intro}
\lim_{l\to\infty}\lim_{k\to\infty}\varphi_C(\rho_k,\sigma_l)
=\lim_{k\to\infty}\lim_{l\to\infty}\varphi_C(\rho_k,\sigma_l)
\end{equation}
whenever both iterated limits exist;
a bounded function of two variables is called stable when it satisfies the same condition.
The term comes from model theory and Banach space theory, where versions of this condition, imposed on formulas and on norms, respectively, define stability~\cite{Shelah:1971b,Krivine-Maurey:1981};
it is unrelated to dynamical stability or to stability of matter.

An instability occurs already for an observable that is diagonal in a product basis, and hence classical.
Let $(|k\rangle)_{k\in\NN}$ and $(|l\rangle)_{l\in\NN}$ be orthonormal sequences in
$H_1$ and $H_2$, for instance the energy eigenstates of two identical oscillators, and let the \emph{order projection} $P$ be the orthogonal projection onto the closed span of the vectors $|k\rangle\otimes |l\rangle$ with $k<l$.
For the density operators $\rho_k=|k\rangle\langle k|$ and $\sigma_l=|l\rangle\langle l|$, the value $\varphi_P(\rho_k,\sigma_l)$ is $1$ if $k<l$ and $0$ if $k\ge l$, so the left side of~\eqref{eq:double-limit-intro} is $0$ and the right side is $1$:
the event “Bob’s system has the higher energy” has no well-defined probability once both energies are idealized to infinity.

Fix an observable $C$.
We regard Alice’s system as the system under study and a preparation $\sigma$ of Bob’s system as a \emph{probe} of it.
For each $\rho\in\fD(H_1)$, the \emph{response} of $\rho$ is the function $r_\rho\colon\fD(H_2)\to[-\|C\|,\|C\|]$ given by $r_\rho(\sigma)=\varphi_C(\rho,\sigma)$;
it records the expectation of the joint measurement of $C$ in the product state $\rho\otimes\sigma$ as the probe varies.
The roles of the two systems are symmetric in principle, and the choice implies no order in time between the preparations: it fixes which preparation indexes the response and which is its variable.
Dually, Alice’s preparation $\rho$ leaves on Bob’s system the \emph{effective observable} $\Tr_1\bigl((\rho\otimes I)C\bigr)$, obtained by taking the partial trace over Alice’s system, whose expectation in every preparation $\sigma$ of Bob’s system is $\varphi_C(\rho,\sigma)$.
The order of idealization bears on three operational tasks:
\begin{enumerate}[label=\textup{(\roman*)},leftmargin=*]
\item
  \emph{Simulation.}
  An idealized preparation is not available in the laboratory,
  but actual preparations, those given by density operators, are available, and so are their finite mixtures, in which Alice chooses one of finitely many actual preparations at random with prescribed probabilities.
  Once an order of idealization is fixed, the task is to reproduce the responses of the idealized preparation, to any prescribed accuracy, by those of a single finite mixture, which must serve for every probe, idealized probes included.
\item
  \emph{Independent combination.}
  Given an idealized preparation of each system, one seeks an extension of $\varphi_C$ to such pairs that is weak$^*$ continuous in each preparation separately and symmetric in the two systems.
\item
  \emph{Discrimination and learning.}
  Bob learns about an unknown preparation of Alice’s system online, probe by probe:
  he chooses each probe according to the previous answers and finds out whether the response, estimated by repeating the measurement, lies above or below a fixed threshold.
  For a positive integer $m$, a protocol with $m$ probes has $2^m$ possible answer sequences, so in principle $m$ probes can tell apart exponentially many preparations.
  What matters is whether the largest number of preparations that $m$ probes can tell apart grows exponentially or only polynomially with $m$, and how many mistakes Bob makes if he predicts each answer before he receives it.
  These numbers count queries about expectations;
  estimating the expectations from repeated preparations is a separate statistical task.
\end{enumerate}

\subsection*{Main results}
Theorem~A below shows that tasks~(i) and~(ii) can be carried out exactly when the observable is stable, task~(i) with the probing system idealized first, and that stability can be read in seven mathematical languages.
Three notions from functional analysis enter its statement.
A subset of a Banach space is \emph{relatively weakly compact} if its closure in the weak topology, in which a net converges exactly when its values under every bounded linear functional converge, is compact in the same topology.
A Banach space is \emph{reflexive} if the canonical embedding identifies it with its second dual.
A \emph{mean} on a set is a finitely additive probability measure defined on all of its subsets.

\begin{goalbox}{Theorem A (stability in seven languages)}
  Let $C$ be an observable on $H_1\otimes H_2$.
  Then the following are equivalent:
  \begin{enumerate}[label=\textup{(A\arabic*)},leftmargin=*]
  \item
    \emph{Analysis: order of idealization.}
    The observable $C$ is stable.
  \item
    \emph{Topology: idealized responses.}
    The effective observables $\Tr_1((\rho\otimes I)C)$, for $\rho\in\fD(H_1)$, form a relatively weakly compact subset of the Banach space $B(H_2)$.
  \item
    \emph{Geometry: factorization.}
    There are a reflexive Banach space $E$ and bounded affine maps $u\colon\fD(H_1)\to E$ and $v\colon\fD(H_2)\to E^*$ such that $\varphi_C(\rho,\sigma)=\langle u(\rho),v(\sigma)\rangle$ for all $\rho\in\fD(H_1)$ and $\sigma\in\fD(H_2)$.
  \item
    \emph{Algebra: idealized states.}
    The two orders of idealization assign the same expectation to $C$ on every pair of states, one of each system.
  \item
    \emph{Convexity: simulation.}
    With Bob’s system idealized first, every idealized preparation of Alice’s system can be simulated to arbitrary accuracy by finite mixtures of actual preparations, uniformly over all probes, idealized ones included.
  \item
    \emph{Probability: randomization.}
    For every mean on $\fD(H_1)$ and every mean on $\fD(H_2)$, the two iterated integrals of $\varphi_C$ against them agree.
  \item
    \emph{Independence: independent combination.}
    The expectation $\varphi_C$ extends to the pairs of states, one of each system, by a function that is weak$^*$ continuous in each state separately.
  \end{enumerate}
\end{goalbox}

The theorem holds, with the same proofs, for every bounded operator $C$ on $H_1\otimes H_2$.
The equivalences are proved in \cref{thm:weakcompact} and \cref{sec:geometry} for (A1)--(A4) and~(A7), in \cref{thm:polyapprox} for~(A5), and in \cref{thm:fubini} for~(A6).
Six of the seven conditions have classical ancestors.
Condition~(A1) is the double limit condition of Krivine and Maurey~\cite{Krivine-Maurey:1981} and of Shelah~\cite{Shelah:1971b}, and (A2) is Grothendieck’s criterion for weak compactness~\cite{Grothendieck:1952}.
Condition~(A3) is the affine form of a factorization theorem of Iovino~\cite{Iovino:1999c}, which rests on that of Davis, Figiel, Johnson and Pełczyński.
Condition~(A4) is Arens regularity in the sense of Arens, Pym and Ülger~\cite{Arens:1951,Pym:1965,Ulger:1987} and, in the model-theoretic reading below, Harrington’s lemma~\cite{Harnik-Harrington:1984};
(A5) comes from Mazur’s theorem, and (A6) is the Fubini property of Pták and Simons~\cite{Ptak:1963b,Simons:1968}.
Condition~(A7) states task~(ii) directly: it is the separate continuity that makes the independent combination of two states well defined.
All seven express a single extension property.
Weak compactness places the limiting responses in the weak closure of the actual ones, and convexity turns this inclusion into uniform approximation by mixtures.
The two orders of idealization extend the expectation to pairs of states, and their agreement yields the extension in~(A7).
When the conditions hold, this extension is unique and symmetric in the two systems:
on the complex space of stable bounded operators, it is the \emph{independent product} of the two states (see \cref{thm:independent-product}).

Task~(iii) requires more.
Fix a threshold $\gamma$ and a \emph{margin} $\delta>0$, and count a response only when it lies at least $\delta/2$ away from $\gamma$.
A \emph{$\delta$-ladder} of length $N$ is a finite staircase like that of the order projection:
preparations $\rho_1,\dots,\rho_N$ of Alice’s system and probes $\sigma_1,\dots,\sigma_N$ such that the response of $\rho_i$ to $\sigma_j$ lies on one side of the band when $i\le j$ and on the other side when $i>j$.
The observable $C$ is \emph{uniformly stable} if, at every margin, its ladders have bounded length.
A \emph{$\delta$-tree} of height $n$ is the corresponding adaptive protocol:
at each step Bob probes and moves to the left or to the right according to the side of the threshold on which the response falls, and after $n$ probes he has told apart $2^n$ preparations, any two of which differ by at least $\delta$ at the probe where their branches separate (see \cref{def:trees}).
Finally, for a nonprincipal ultrafilter $\cU$ on $\NN$, we regard sequences $x=(x_k)_{k\in\NN}$ in $\fD(H_1)$ and $y=(y_k)_{k\in\NN}$ in $\fD(H_2)$ as families of experiments indexed by $k$, and we call
$\Phi^\cU_C(x,y)=\ulimit{k}{\cU}\varphi_C(x_k,y_k)$ the \emph{ultrapower expectation} of $C$.

\begin{goalbox}{Theorem B (uniform stability in five languages)}
  Let $C$ be an observable on $H_1\otimes H_2$.
  Then the following are equivalent:
  \begin{enumerate}[label=\textup{(B\arabic*)},leftmargin=*]
  \item
    \emph{Analysis: uniform order of idealization.}
    The observable $C$ is uniformly stable.
  \item
    \emph{Combinatorics: decision trees.}
    At every margin, the heights of the decision trees of $C$ are bounded.
  \item
    \emph{Ultraproducts: families of experiments.}
    For every nonprincipal ultrafilter $\cU$ on $\NN$, the ultrapower expectation $\Phi^\cU_C$ is stable.
  \item
    \emph{Growth: discrimination.}
    At every margin, the largest number of preparations that $m$ adaptive probes can tell apart is bounded by a polynomial in $m$.
  \item
    \emph{Learning: threshold prediction.}
    At every margin, the side of a fixed threshold on which the response of an unknown preparation lies can be predicted online with a number of mistakes bounded independently of the threshold.
  \end{enumerate}
\end{goalbox}

Conditions (B1)--(B3) are equivalent by \cref{thm:uniform}, which also shows that “every” may be replaced by “some” in~(B3);
(B4) is equivalent to them by \cref{thm:growth}, and (B5), for the threshold game of \cref{sec:learn}, by \cref{prop:littlestone}.
Every uniformly stable observable is stable, but not conversely (see \cref{exa:sector}).
The decision trees, which describe the adaptive protocols of task~(iii), connect uniform stability with the counts in~(B4) and~(B5):
uniform stability bounds their height at margin $\delta$ by an integer $d$, a counting argument of the kind that proves the Sauer--Shelah lemma turns this bound into the polynomial $\binom m0+\binom m1+\dots+\binom md$ of degree $d$, and the same integer $d$ bounds the number of mistakes;
if uniform stability fails, then at some margin protocols with $m$ probes tell apart all $2^m$ preparations of a tree, for every positive integer $m$.
The polynomial bound is a finite and adaptive counterpart of Rowbottom’s original definition of stability as a bound on the number of types (see \cref{rem:rowbottom}).

\subsection*{Examples}
In \cref{sec:physics} we show that stability holds for several basic observables of quantum optics and quantum information:
\begin{itemize}
\item
  \emph{Product and local measurements} $A\otimes B$
  and $A\otimes I+I\otimes B$, respectively, for bounded self-adjoint operators $A$ on $H_1$ and $B$ on $H_2$;
  the self-adjoint members of the minimal tensor product, the operator-norm closure of the finite sums of products of bounded operators;
  all compact (for instance, finite-rank) observables, such as the projections onto Bell states, the maximally entangled states of two qubits (quantum two-level systems);
  and all observables when one of the systems is finite-dimensional (see \cref{thm:stable} and \cref{cor:compact-and-findim}).
\item
  \emph{The swap test}~\cite{Buhrman-Cleve-Watrous-deWolf:2001}, for two systems with the same Hilbert space:
  its acceptance probability $\tfrac12(1+\Tr\rho\sigma)$
  determines the overlap $\Tr\rho\sigma$ of two unknown states, and its observable, the projection onto the symmetric subspace, is stable, as are the projection onto the antisymmetric subspace and the swap operator, which exchanges the two systems
  (see \cref{sec:swap}).
\item
  \emph{Hong--Ou--Mandel coincidences}~\cite{Hong-Ou-Mandel:1987}:
  a balanced ($50{:}50$) beam splitter, such as a half-silvered mirror, transmits each single incoming photon with probability $\tfrac12$ and reflects it otherwise;
  when two single photons whose internal (spectral and polarization) states are $\rho$ and $\sigma$ enter its two ports, a coincidence, a detection at both output ports, occurs with probability $\tfrac12(1-\Tr\rho\sigma)$ (see \cref{sec:swap}).
\item
  \emph{Discrete equality tests}, the projections onto the event that two quantities with pure point spectra,
  that is, with an orthonormal basis of eigenvectors, take equal values, such as “equal photon numbers” in two modes of light or “equal energies” of two particles confined in identical boxes (see \cref{prop:comparison}).
\end{itemize}
For infinite-dimensional systems, the swap operator, the projections of the swap test and of Hong--Ou--Mandel coincidences, and the equality tests between quantities with infinitely many common values are not norm limits of finite sums of products
(see \cref{prop:notmin}, \cref{sec:swap} and \cref{prop:comparison}).
They are nevertheless stable for a simple reason:
the expectation of each has the form $\varphi_C(\rho,\sigma)=\langle u(\rho),v(\sigma)\rangle$
for bounded maps $u$ from $\fD(H_1)$ and $v$ from $\fD(H_2)$ into a common Hilbert space.
We call such observables \emph{Hilbert-factorable}, and in \cref{prop:hilbertfactor} we show that every Hilbert-factorable observable is stable;
for the swap operator, the expectation $\Tr\rho\sigma$ is the Hilbert--Schmidt inner product of the two density operators.
Condition~(A3) of Theorem~A says that, with Hilbert space replaced by a reflexive Banach space, this form characterizes stability.
Hilbert-factorable observables are moreover uniformly stable, with explicit bounds:
if the vectors $u(\rho)$ and $v(\sigma)$ have norm at most one, then at every margin $\delta>0$ and in every dimension, the polynomial in~(B4) has degree at most $16/\delta^2$, and the mistake bound in~(B5) is at most $16/\delta^2$ (see \cref{thm:perceptron}).

By contrast, the comparison $[A<B]$, the projection onto the event that Bob’s quantity
$B$ takes a larger value than Alice’s quantity $A$, is unstable whenever $A$ and $B$ are self-adjoint operators on $H_1$ and $H_2$ with pure point spectra that are unbounded above, as we prove in \cref{prop:comparison};
examples are “Bob’s system has more photons” and “Bob’s system has the higher energy”, the event of the order projection when the vectors $|k\rangle$ and $|l\rangle$ are the energy eigenstates of two identical oscillators.
This instability, like that of the order projection, has nothing to do with noncommutativity:
both projections are diagonal in a product basis.

\begin{goalbox}{Consequences for preparations and probes}
  \begin{enumerate}[label=\textup{(\arabic*)},leftmargin=*]
  \item
    \textbf{Finite simulation with bounds independent of dimension.}
    For the swap test, Hong--Ou--Mandel coincidences and equality tests, every ultralimit of a sequence of actual preparations is reproduced within
    $\varepsilon$, uniformly over all probes, by an equal-weight mixture of $\lceil4/\varepsilon^2\rceil$ terms of the sequence, for every $\varepsilon>0$ and in every dimension (see \cref{thm:montecarlo,cor:montecarlo}).
    The order of idealization in~(A5) cannot be reversed:
    with Alice’s system idealized first, simulation by arbitrary actual preparations can succeed even for an unstable observable, and stability is characterized instead by the simulation of each ultralimit of a sequence of actual preparations by mixtures from every tail of that same sequence (see \cref{thm:polyapprox,exa:order-matters}).
  \item
    \textbf{Randomization and the detection of instability.}
    Pure preparations and pure probes, given by rank-one orthogonal projections, suffice to detect instability (see \cref{prop:pure}).
    Countably additive Borel randomization of the preparations of either system, for the trace-norm topology, nevertheless always makes the two iterated averages agree, so that the means in~(A6) must be genuinely finitely additive (see \cref{prop:sigmaadditive}).
  \item
    \textbf{Uniform bounds and their boundary.}
    For the swap test and Hong--Ou--Mandel coincidences, decision trees at a margin $\delta>0$ have height at most $4/\delta^2$, and for equality tests at most $16/\delta^2$, by Novikoff’s perceptron bound (see \cref{sec:trees}).
    Stability alone gives no such bound:
    a comparison of levels within spin multiplets, the irreducible representations of the group $\mathrm{SU}(2)$ through which rotations act in quantum mechanics, is stable but not uniformly stable (see \cref{exa:sector}).
  \item
    \textbf{Algebraic and topological closure.}
    Stable bounded operators are closed under complex linear combinations, adjoints and norm limits, and include the minimal tensor product and all operators whose expectations factor through a Hilbert space.
    Stable yes/no tests are closed under negation and under combining mutually exclusive tests.
    Stability is not preserved by the quantum-logical “and”, the projection onto the intersection of two ranges, even for two uniformly stable projections, nor by limits in the strong operator topology, that is, by pointwise limits on vectors
    (see \cref{sec:lattice}).
  \end{enumerate}
\end{goalbox}

The simulation in~(A5) is approximation of degree one: the approximating responses are convex combinations of actual responses, which are linear polynomials in those responses.
This is possible because the expectation is affine in each preparation and the admissible preparations form a convex set:
for a positive integer $m$, preparations $\rho_1,\dots,\rho_m\in\fD(H_1)$ and probabilities $p_1,\dots,p_m\ge0$ with $p_1+\dots+p_m=1$, we have, for every $\sigma\in\fD(H_2)$,
\[
  \sum_{j=1}^m p_j\varphi_C(\rho_j,\sigma)
  =\varphi_C\Bigl(\sum_{j=1}^m p_j\rho_j,\sigma\Bigr).
\]
This degree-one form of approximation connects the present setting with our work on deep computations,
idealized limits of computations of unbounded depth, where approximation by polynomials in finitely many realized values is one expression of tame behavior~\cite{Alva-Duenez-Iovino-Walton:2024,Duenez-Iovino-MatosWiederhold-Salvetti-Tall:2026}.
For Hilbert-factorable observables, Maurey’s empirical method bounds the number of terms of a simulating mixture, and Novikoff’s perceptron bound the heights of decision trees;
the two numbers measure different features of the expectation, the second belonging to the stronger classification by uniform stability.

\subsection*{The model-theoretic reading}
In real-valued model theory~\cite{Keisler:2023}, the expectation
$\varphi_C$ of an observable $C$ is the interpretation of a formula $\varphi(x,y)$, a real-valued predicate whose variables $x$ and $y$ range over the preparations of Alice’s and of Bob’s system, respectively.
Each of the physical notions above has a standard model-theoretic counterpart, which we describe in \cref{sec:independence} and in the historical remarks at the end of the sections;
the local stability theory on which we draw was developed by Ben Yaacov and Usvyatsov~\cite{BenYaacov-Usvyatsov:2010} in the continuous logic of Ben Yaacov, Berenstein, Henson and Usvyatsov~\cite{BenYaacov-Berenstein-Henson-Usvyatsov:2008}.
We take as the model $\cM_C$ the structure formed by the preparations of the two systems with the predicate $\varphi$.
An idealized preparation of Alice’s system determines a $\varphi$-type over the model, that is, an assignment of a value to $\varphi(x,\sigma)$ for every preparation $\sigma$ of Bob’s system.
This type is realized in an ultrapower of the model, in the sense that its values are the values $\varphi(a,\sigma)$ at some element $a$ of the ultrapower;
for an actual preparation the type is realized in the model itself, and it is given by the effective observable that the preparation leaves on Bob’s system.
Relative weak compactness of the set of effective observables corresponds to two properties of types together:
their definability, that is, the fact that the values of each type,
as a function of the parameters, are given up to uniform approximation by formulas with parameters from the model,
and the fact that every extension of a type to idealized parameters that is approximately finitely satisfiable in the model still takes the values given by the definition of the type.
Here an extension of a type is \emph{approximately finitely satisfiable} in the model if, for any finitely many of its parameters and every
$\varepsilon>0$, some element of the model has, at each of these parameters, a value of $\varphi$ within $\varepsilon$ of the value that the extension assigns.
Definability alone does not suffice, since for the order projection every type determined by an idealized preparation of Alice’s system is definable.

The other notions of Theorems~A and~B have counterparts of the same kind.
The two orders of idealization correspond to the two orders of realizing coheirs,
the extensions of types to larger sets that are approximately finitely satisfiable in the model, and the agreement of the two orders corresponds to Harrington’s lemma, by which the definitions of two types commute.
Means on the sets of preparations correspond to Keisler measures, the finitely additive probability measures on definable sets;
ladders and decision trees correspond to Shelah’s order property and to his binary trees of formulas, respectively;
and families of experiments correspond to elements of ultrapowers.
Under this correspondence, the double limit condition~\eqref{eq:double-limit-intro} becomes the stability of the formula~$\varphi$ in the model, the absence there of infinite ladders with a positive gap, which is strictly weaker than its stability in the theory of the model;
the latter corresponds to uniform stability (see \cref{thm:uniform} and \cref{exa:sector}).
The correspondence is one of form, and the proofs below are analytic;
it guides the arrangement of \cref{tab:rosetta,tab:uniform-rosetta}, in which the model-theoretic reading of each row is printed in italics after the semicolon in the first column.

\begin{table}[tp]
  \centering
  \footnotesize
  \renewcommand{\arraystretch}{1.18}
  \setlength{\arrayrulewidth}{0.4pt}
  \begin{tabular}{>{\columncolor{palepurple}\raggedright\arraybackslash}p{3.8cm}>{\raggedright\arraybackslash}p{4.9cm}>{\raggedright\arraybackslash}p{4.9cm}}
    \multicolumn{3}{c}{\large\textsc{\textcolor{deepblue}{Stability}}} \\[2pt]
    \arrayrulecolor{urlpurple!55!white}\hline
    \rowcolor{palepurple}
    \textbf{Mathematical structure}\newline\emph{physical interpretation; model theory} (section) & \textbf{Equivalent condition} & \textbf{Obstruction} \\
    \hline
    \rowcolor{lightblue}\multicolumn{3}{l}{\textsc{Stability of the expectation on the given preparation spaces} (Theorem~A)} \\
    \hline
    \textbf{Double limits}\newline\emph{order of idealization; stability in the model} (\S\ref{sec:stability}) & the two iterated limits agree whenever both exist & sequences with distinct iterated limits; an infinite ladder with a positive gap \\
    \textbf{Weak compactness}\newline\emph{idealized responses; definability and coheirs} (\S\ref{sec:topology}) & the slice map is weakly compact; Bob-first responses of every Alice state are represented on all probes by an operator on Bob’s space & the slice map is not weakly compact; some Bob-first response has no such operator representation on all probes \\
    \textbf{Reflexive factorization}\newline\emph{expectation pairing; reflexive representation} (\S\ref{sec:geometry}) & $\varphi_C(\rho,\sigma)=\langle u(\rho),v(\sigma)\rangle$, with bounded affine $u$ into a reflexive space $E$ and $v$ into $E^*$ & no bounded factorization of the expectation through a reflexive space and its dual \\
    \textbf{Arens regularity}\newline\emph{idealized states; coheirs in two orders} (\S\ref{sec:algebra}) & the two extensions agree at $C$ on every pair of states, one of each system & the two extensions disagree at $C$ on some pair of states, one of each system \\
    \textbf{Convex approximation}\newline\emph{simulation; definitions by averages} (\S\ref{sec:simulation}) & every idealized response is approximated by finite mixtures, uniformly on all probes, with Bob idealized first & some Bob-first response has positive uniform distance from all actual responses \\
    \textbf{Fubini for means}\newline\emph{randomization; Keisler measures} (\S\ref{sec:probability}) & the two iterated averages agree for every pair of finitely additive probability measures & some pair gives unequal averages; countably additive randomization never detects this obstruction \\
    \textbf{Symmetric independent product}\newline\emph{independent combination; Harrington’s lemma} (\S\ref{sec:independence}) & the common values define a state on the stable operators; for each such $C$, its expectation has a unique separately weak$^*$ continuous extension & at an unstable $C$, no separately weak$^*$ continuous extension to all pairs of states, one of each system, exists \\
    \hline
  \end{tabular}
  \caption{Stability through analysis, geometry, probability and model theory: equivalent conditions on a bounded observable $C$ (middle column) and their obstructions (right column).}
  \label{tab:rosetta}
\end{table}

\begin{table}[tp]
  \centering
  \footnotesize
  \renewcommand{\arraystretch}{1.18}
  \setlength{\arrayrulewidth}{0.4pt}
  \begin{tabular}{>{\columncolor{palepurple}\raggedright\arraybackslash}p{3.8cm}>{\raggedright\arraybackslash}p{4.9cm}>{\raggedright\arraybackslash}p{4.9cm}}
    \multicolumn{3}{c}{\large\textsc{\textcolor{deepblue}{Uniform stability}}} \\[2pt]
    \arrayrulecolor{urlpurple!55!white}\hline
    \rowcolor{palepurple}
    \textbf{Mathematical structure}\newline\emph{physical interpretation; model theory} (section) & \textbf{Equivalent condition} & \textbf{Obstruction} \\
    \hline
    \textbf{Ladder bounds}\newline\emph{uniform order of idealization; order property at a margin} (\S\ref{sec:ladders-trees}) & at every positive margin, ladder lengths are bounded & at some positive margin, ladders have every finite length \\
    \textbf{Tree bounds}\newline\emph{adaptive response queries; Shelah’s trees} (\S\ref{sec:ladders-trees}) & at every positive margin, tree heights are bounded & at some positive margin, trees have every finite height \\
    \textbf{Ultrapower stability}\newline\emph{families of experiments; stability in the theory} (\S\ref{sec:ultra}) & the ultrapower expectation is stable for every nonprincipal ultrafilter on $\NN$, equivalently for some such ultrafilter & at some positive margin, every such ultrapower encodes an infinite decision tree \\
    \textbf{Polynomial growth}\newline\emph{discrimination; finite counting counterpart of $\lambda$-stability} (\S\ref{sec:growth}) & at every margin, $m$ adaptive expectation queries realize at most $\sum_{i=0}^{d}\binom mi$ transcripts, for an integer $d\ge0$ independent of $m$ and the threshold & at some margin, protocols realize all $2^m$ transcripts for every positive integer $m$ \\
    \textbf{Finite mistake bounds}\newline\emph{threshold prediction; Littlestone dimension} (\S\ref{sec:learn}) & at every margin, a finite mistake bound in the fixed-threshold response game, uniform in the threshold; for the swap test, at most $4/\delta^2$ at margin $\delta$ & at some margin, no finite mistake bound is uniform in the threshold \\
    \hline
  \end{tabular}
  \caption{Uniform stability through finite configurations, ultrapowers and learning: equivalent conditions on a bounded self-adjoint observable (middle column) and their obstructions (right column).}
  \label{tab:uniform-rosetta}
\end{table}

\subsection*{History and context}
As the ancestors of the conditions of Theorem~A show, the double limit condition~\eqref{eq:double-limit-intro}, for bounded functions on a product of two sets, has been found, often independently, in several parts of mathematics, from functional analysis and the theory of bilinear forms to Banach space theory and model theory;
Pták’s combinatorial lemma on convex means~\cite{Ptak:1963b} belongs to the same circle.
Its uniform form appears, through Shelah’s trees, as finite Littlestone dimension, a bound on the height of decision trees, in learning theory~\cite{Littlestone:1988,Chase-Freitag:2019}.
Iovino observed that, for two-valued functions, the failure of the double limit condition is Shelah’s order property,
the existence of an infinite staircase of values like that of the order projection, and, building on Krivine and Maurey, he developed stability for real-valued structures~\cite{Iovino:1994a,Iovino:1999a,Iovino:1999c}.

The failure of stability leaves open how much regularity the limiting responses retain.
In Part~II~\cite{Duenez-Iovino:2026b} we classify the topology of limiting responses under the weaker requirement that each is a pointwise limit of a sequence of continuous functions;
convexity, supplied by probabilistic mixtures, restricts the possible response spaces.
In Part~III~\cite{Duenez-Iovino:2026c} we turn to forking, the dependence of a type on additional parameters.
We study it in the algebraic label structures of quantum error-correcting stabilizer codes, whose theory is simple and unstable, and in the associated mixed response structures, where the passage to mixtures can change the classification of the complete theory.

\paragraph{Organization of the paper.}
The proofs proceed from the two orders of idealization to their analytic and combinatorial consequences.
We define the states and the two orders of idealization in \cref{sec:setting}, establish the double limit criterion in \cref{sec:stability}, and classify the example observables in \cref{sec:physics}.
For Theorem~A, weak compactness is the central step:
in \cref{sec:char} we relate it to reflexive factorization and to the agreement of the two orders of idealization,
and we then derive simulation by mixtures in \cref{sec:simulation}, the Fubini identity in \cref{sec:probability}, and the independent product in \cref{sec:independence}.
For Theorem~B, we turn in \cref{sec:trees} to the finite configurations encoded by the expectation function, which yield the ultrapower criterion and the bounds on growth and prediction.
With the two classifications established, we examine in \cref{sec:lattice} which linear, topological and lattice operations preserve stability:
linear combinations, adjoints, norm limits and orthogonal sums do, whereas meets and strong operator limits need not.
We conclude with the outlook in \cref{sec:outlook} and the questions in \cref{sec:questions}.

%#
%## SECTION: Physics: two orders of idealization
%#
\section{States and the two orders of idealization}\label{sec:setting}

\noindent The expectation on pairs of density operators determines two extensions to arbitrary states, according to which variable is extended first.
We construct these extensions through the duality between trace-class and bounded operators.
We then compare them:
stability will be the condition under which they agree.

%#
%### SUBSECTION: Actual states and effective observables
%#
\subsection{Actual states and effective observables}
As in the Introduction, the complex separable Hilbert spaces $H_1$ and $H_2$ describe Alice’s and Bob’s systems;
throughout, the letter $H$ denotes a complex separable Hilbert space, and we write $\fD(H)$ for the set of density operators on $H$, the positive operators of trace one.
We use inner products that are linear in the second slot.
For vectors $\psi$ and $\chi$ of $H$ and a bounded operator $A$ on $H$, we write $\langle\chi|A|\psi\rangle=\langle\chi,A\psi\rangle$ and $|\psi\rangle\langle\chi|$ for the rank-one operator $\xi\mapsto\psi\langle\chi,\xi\rangle$.
For a Banach space $E$, the dual pairing $\langle x,\ell\rangle=\ell(x)$ for $x\in E$ and $\ell\in E^*$ remains bilinear.
The space $\cT(H)$ of trace-class operators on $H$ consists of the operators $\tau$ whose trace norm $\|\tau\|_1=\Tr|\tau|$, where $|\tau|=(\tau^*\tau)^{1/2}$, is finite.
With this norm, the space $\cT(H)$ has as its dual the algebra $B(H)$ of bounded operators on $H$, each operator $A\in B(H)$ acting by $\tau\mapsto\Tr(\tau A)$.
For a unit vector $\psi$, the pure state $|\psi\rangle\langle\psi|$, the orthogonal projection onto the line through $\psi$, is often written simply $\psi$.

\begin{definition}[Bilinear form and slice maps]\label{def:slice-and-bilinear}
  Let $C\in B(H_1\otimes H_2)$ be given.
  \begin{enumerate}[label=\textup{(\arabic*)}]
  \item
    The \emph{bilinear form} of $C$ is the map
    $m_C(\tau,\tau')=\Tr\bigl((\tau\otimes\tau')C\bigr)$ on $\cT(H_1)\times\cT(H_2)$.
  \item
    The \emph{slice maps} of $C$ are the operators
    $T_C\colon\cT(H_1)\to B(H_2)$ and $S_C\colon\cT(H_2)\to B(H_1)$ determined by
    \[
      \Tr\bigl(\tau'\,T_C(\tau)\bigr)=m_C(\tau,\tau')=\Tr\bigl(\tau\,S_C(\tau')\bigr)
      \qquad\text{for all }\tau\in\cT(H_1)\text{ and }\tau'\in\cT(H_2).
    \]
  \end{enumerate}
\end{definition}

The restriction of $m_C$ to $\fD(H_1)\times\fD(H_2)$ is the expectation $\varphi_C(\rho,\sigma)=\Tr\bigl((\rho\otimes\sigma)C\bigr)$ of the Introduction, and both slice maps have norm at most $\|C\|$.
For a self-adjoint $C$ and a density operator $\rho$ on
$H_1$, the operator $T_C(\rho)=\Tr_1\bigl((\rho\otimes I)C\bigr)$
is obtained from $(\rho\otimes I)C$ by the partial trace over Alice’s system, the linear operation that sends $\tau\otimes B$ to $\Tr(\tau)\,B$ for trace-class $\tau$ on $H_1$ and bounded $B$ on $H_2$:
for every preparation of Bob’s system, $T_C(\rho)$ has the same expectation as $C$ on the corresponding product state.
This identity of expectations defines the \emph{effective observable};
it does not specify a local implementation of the joint measurement.
For bounded operators $A$ on $H_1$ and $B$ on $H_2$, the product $C=A\otimes B$ yields $T_C(\rho)=\Tr(\rho A)\,B$:
every preparation of Alice’s system leaves a multiple of $B$ on Bob’s side, and the slice map $T_C$ has rank at most one.

%#
%### SUBSECTION: Idealized states
%#
\subsection{Idealized states}
A \emph{state} of $B(H)$ is a positive linear functional $\omega$ with $\omega(I)=1$.
The state space $\cS(B(H))\subseteq B(H)^*$ carries the weak$^*$ topology, the topology of pointwise convergence on $B(H)$, in which it is compact.
Density operators give the \emph{normal}, or actual, states $A\mapsto\Tr(\rho A)$;
we call the remaining states \emph{idealized}.
For instance, let $(|k\rangle)_{k\in\NN}$ be an orthonormal sequence in $H$ (the energy eigenstates of an oscillator, say), and let $\cU$ be a nonprincipal ultrafilter on $\NN$.
With the ultralimits along $\cU$ recalled in the Introduction, the state $\omega$ of $B(H)$ defined by
\[
  \omega(A)=\ulimit{k}{\cU}\langle k|A|k\rangle
\]
vanishes on every compact operator, and it is therefore idealized, since every normal state $A\mapsto\Tr(\rho A)$ takes a positive value at the projection onto an eigenvector of $\rho$ with a positive eigenvalue.
When the vectors $|k\rangle$ are the energy eigenstates of an oscillator, the state $\omega$ assigns probability zero to every finite set of energy levels and thus represents escape beyond every finite energy cutoff.
More generally, for a sequence $(\rho_k)_{k\in\NN}$ in $\fD(H)$, the \emph{ultralimit state} $\ulimit{k}{\cU}\rho_k$ is the state $A\mapsto\ulimit{k}{\cU}\Tr(\rho_kA)$.

\begin{lemma}\label{lem:density}
  Let $H$ be a complex separable Hilbert space.
  The normal states are weak$^*$ dense in the state space $\cS(B(H))$.
  Every $\omega\in B(H)^*$ is a combination $\omega=c_1\omega_1-c_2\omega_2+ic_3\omega_3-ic_4\omega_4$ of four states $\omega_1,\dots,\omega_4$ with $0\le c_j\le\|\omega\|$ for $j=1,\dots,4$.
\end{lemma}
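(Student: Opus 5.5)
For the density statement, I would argue by Hahn--Banach separation in the locally convex space $B(H)^*$ with the weak$^*$ topology, whose continuous linear functionals are exactly the evaluations $\omega\mapsto\omega(A)$, $A\in B(H)$. Let $K$ be the weak$^*$ closure of the normal states. The normal states form a convex set, so $K$ is a weak$^*$ closed convex subset of the compact set $\cS(B(H))$. Suppose some state $\omega_0$ lies outside $K$. Then there are $A\in B(H)$ and $t\in\RR$ with
\[
  \operatorname{Re}\omega_0(A)>t\ge\operatorname{Re}\Tr(\rho A)
  \qquad\text{for all }\rho\in\fD(H).
\]
Replacing $A$ by its hermitian part $\tfrac12(A+A^*)$ leaves both sides unchanged, because states are hermitian functionals. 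So we may take $A$ self-adjoint. Testing the inequality on the pure states $\rho=|\psi\rangle\langle\psi|$ gives $\langle\psi,A\psi\rangle\le t$ for every unit vector $\psi$, that is, $tI-A\ge0$. Positivity of $\omega_0$ together with $\omega_0(I)=1$ then yields $\omega_0(A)\le t$, a contradiction. Hence $K=\cS(B(H))$. The only care needed is with real versus complex functionals in the separation theorem; passing to the real part handles it. Separability of $H$ is not used.

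For the decomposition, I would begin with the hermitian splitting. Set $\omega^*(A)=\overline{\omega(A^*)}$, and define $\omega_{\mathrm r}=\tfrac12(\omega+\omega^*)$ and $\omega_{\mathrm i}=\tfrac1{2i}(\omega-\omega^*)$. These are hermitian functionals with $\omega=\omega_{\mathrm r}+i\omega_{\mathrm i}$. Since the adjoint is an isometry, $\|\omega^*\|=\|\omega\|$, and so both parts have norm at most $\|\omega\|$.

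Each hermitian part is then decomposed as a difference of positive functionals with controlled norm. This is the one place where a real tool is needed: the Jordan decomposition of a bounded hermitian functional on a $C^*$-algebra, $h=h_+-h_-$ with $h_\pm\ge0$ and $\|h_+\|+\|h_-\|=\|h\|$. The constructive version I would cite or sketch goes as follows. Restrict $h$ to the self-adjoint part of $B(H)$, where it is a real functional of norm $\|h\|$. Extend it by Hahn--Banach, with the same norm, to the real space $C(\cS(B(H)))$ of continuous functions on the state space, into which the self-adjoint part embeds isometrically via $A\mapsto(\varphi\mapsto\varphi(A))$. Then apply the Riesz--Jordan decomposition of the resulting signed measure. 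Pulling back, the positive and negative parts give positive functionals $h_\pm$ with $\|h_+\|+\|h_-\|\le\|h\|$. Positivity is preserved under the pull-back because a positive operator corresponds to a nonnegative function.

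Finally, for a positive functional $p\ne0$ we have $\|p\|=p(I)$, so $p=c\,\omega'$ with $c=p(I)$ and $\omega'=p/p(I)$ a state. If $p=0$, take $c=0$ and any state. Each of the four coefficients is bounded by the norm of the corresponding hermitian part, and hence by $\|\omega\|$. I expect the Jordan decomposition with norm control to be the main obstacle; the rest is bookkeeping.
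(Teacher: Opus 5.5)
Your proposal is correct and follows the paper's route. For density, you separate by a self-adjoint operator via Hahn--Banach, filling in the real-part and pure-state details that the paper compresses into $\sup_\rho\Tr(\rho A)=\sup\operatorname{spec}A$. For the four-state representation, you use the Jordan decomposition of the two hermitian parts: the paper simply cites it from Kadison--Ringrose, whereas you sketch it through a norm-preserving extension to $C(\cS(B(H)))$ followed by the Riesz--Jordan decomposition.
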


\begin{proof}
  For the first statement, suppose that a state $\omega$ is not in the weak$^*$ closure of the convex set $\fD(H)$.
  By the Hahn--Banach theorem, there is a self-adjoint $A\in B(H)$ with $\omega(A)>\sup_{\rho\in\fD(H)}\Tr(\rho A)=\sup\operatorname{spec}A$, which is impossible for a state.
  The second statement is the Jordan decomposition of the hermitian parts of $\omega$ (see~\cite[Section~4.3]{Kadison-Ringrose:1983b}).
\end{proof}

%#
%### SUBSECTION: The two orders of idealization
%#
\subsection{The two orders of idealization}
Suppose that Alice’s preparation has a limiting state $\omega_1$ of $B(H_1)$ and Bob’s has a limiting state $\omega_2$ of $B(H_2)$.
These states may be normal or idealized.
Fix an observable $C\in B(H_1\otimes H_2)$.
There are two natural ways of assigning it an expectation at these states.
The same constructions apply to arbitrary bounded operators and will be defined on all of $B(H_1\otimes H_2)$.

\emph{Alice idealized first.}
Each actual state $\sigma$ of Bob’s system leaves on Alice’s side the effective observable $S_C(\sigma)$, to which Alice’s idealized preparation assigns the expectation $\omega_1(S_C\sigma)$.
The map $\tau'\mapsto\omega_1(S_C\tau')$ is a bounded linear functional on $\cT(H_2)$, so there is a unique operator $Z_{\omega_1}\in B(H_2)$ with
\[
  \Tr(\tau' Z_{\omega_1})=\omega_1(S_C\tau')\qquad\text{for all }\tau'\in\cT(H_2).
\]
In other words, $Z_{\omega_1}$ is the image $S_C'(\omega_1)$ of $\omega_1$ under the transpose $S_C'\colon B(H_1)^*\to B(H_2)$ of $S_C$, the map that composes each functional on $B(H_1)$ with $S_C$.
The operator $Z_{\omega_1}$ is the effective observable that Alice’s idealized preparation leaves on Bob’s side.
Bob then evaluates his state on it.

\emph{Bob idealized first.}
Symmetrically, Bob’s preparation leaves on Alice’s side the operator $W_{\omega_2}=T_C'(\omega_2)$ in $B(H_1)$.
It satisfies $\Tr(\tau W_{\omega_2})=\omega_2(T_C\tau)$ for all $\tau\in\cT(H_1)$, and Alice evaluates her state on it.

\begin{definition}[The two products]\label{def:products}
  For states $\omega_1$ of $B(H_1)$ and $\omega_2$ of $B(H_2)$, and for $C\in B(H_1\otimes H_2)$, we define
  \[
    (\omega_1\oA\omega_2)(C)=\omega_2\bigl(Z_{\omega_1}\bigr),
    \qquad
    (\omega_1\oB\omega_2)(C)=\omega_1\bigl(W_{\omega_2}\bigr),
  \]
  where $Z_{\omega_1}=S_C'(\omega_1)\in B(H_2)$ and
  $W_{\omega_2}=T_C'(\omega_2)\in B(H_1)$ are the images of $\omega_1$ and $\omega_2$ under the transposes of the slice maps of $C$.
\end{definition}

The subscript names the system that is idealized first.
When $C$ is self-adjoint, the operators $Z_{\omega_1}$ and $W_{\omega_2}$ are the effective observables that the two states leave on Bob’s and on Alice’s side, respectively, and both product values lie in $[-\|C\|,\|C\|]$.
The next lemma explains the names of the products:
each product is an iterated limit, taken in one of the two orders.
It also expresses the products through the bitransposes
$T_C''\colon\allowbreak B(H_1)^*\to B(H_2)^{**}$ and $S_C''\colon\allowbreak B(H_2)^*\to B(H_1)^{**}$ of the slice maps, that is, the transposes of their transposes $T_C'$ and $S_C'$.

\begin{lemma}\label{lem:products}
  \emergencystretch=1em
  Let $C\in B(H_1\otimes H_2)$, and let $\omega_1$ and $\omega_2$ be states of $B(H_1)$ and of $B(H_2)$.
  \begin{enumerate}[label=\textup{(\alph*)}]
  \item
    If $(\rho_i)_{i\in\mathcal{I}}$ and $(\sigma_j)_{j\in\mathcal{J}}$ are nets of normal states of $B(H_1)$ and of $B(H_2)$, indexed by directed sets $\mathcal{I}$ and $\mathcal{J}$, with weak$^*$ limits $\omega_1$ and $\omega_2$, then
    \[
      (\omega_1\oA\omega_2)(C)=\lim_j\lim_i\varphi_C(\rho_i,\sigma_j),
      \qquad
      (\omega_1\oB\omega_2)(C)=\lim_i\lim_j\varphi_C(\rho_i,\sigma_j).
    \]
  \item
    If $\omega_1=\ulimit{k}{\cU}\rho_k$ and $\omega_2=\ulimit{l}{\cV}\sigma_l$ are the ultralimit states of sequences of normal states along nonprincipal ultrafilters $\cU$ and $\cV$ on $\NN$, then
    \[
      (\omega_1\oA\omega_2)(C)=\ulimit{l}{\cV}\ulimit{k}{\cU}\varphi_C(\rho_k,\sigma_l),
      \qquad
      (\omega_1\oB\omega_2)(C)=\ulimit{k}{\cU}\ulimit{l}{\cV}\varphi_C(\rho_k,\sigma_l).
    \]
  \item
    The two products satisfy
    \[
      (\omega_1\oB\omega_2)(C)=\langle T_C''(\omega_1),\omega_2\rangle,
      \qquad
      (\omega_1\oA\omega_2)(C)=\langle S_C''(\omega_2),\omega_1\rangle .
    \]
  \item
    The map $\omega_2\mapsto(\omega_1\oA\omega_2)(C)$, defined on all states $\omega_2$ of $B(H_2)$, is weak$^*$ continuous.
  \end{enumerate}
\end{lemma}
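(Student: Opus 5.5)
The plan is to first establish the weak$^*$-continuity facts underlying all four parts, then read off (a) and (b) as instances of continuity, and (c) and (d) as unwindings of the definitions of transposes.

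\emph{Basic continuity facts.}
For fixed $\tau'\in\cT(H_2)$, the map $\omega_1\mapsto\omega_1(S_C\tau')=\Tr(\tau' Z_{\omega_1})$ is weak$^*$ continuous on $B(H_1)^*$, since it is evaluation at the fixed operator $S_C\tau'\in B(H_1)$. Symmetrically, $\omega_2\mapsto\Tr(\tau W_{\omega_2})$ is weak$^*$ continuous for fixed $\tau\in\cT(H_1)$.
For a normal state $\omega_1=\Tr(\rho\,\cdot)$ we have $Z_{\omega_1}=T_C(\rho)$, because $\Tr(\tau'Z_\rho)=\Tr(\rho S_C\tau')=m_C(\rho,\tau')=\Tr(\tau'T_C\rho)$. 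Likewise $W_\sigma=S_C(\sigma)$ for a normal state $\sigma$.

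\emph{Parts (a) and (b).}
Fix $j$. Then $\varphi_C(\rho_i,\sigma_j)=\Tr(\rho_iS_C\sigma_j)$ converges in $i$ to $\omega_1(S_C\sigma_j)=\Tr(\sigma_jZ_{\omega_1})$, since $S_C\sigma_j$ is a fixed bounded operator. Then $\Tr(\sigma_jZ_{\omega_1})=\sigma_j(Z_{\omega_1})$ converges in $j$ to $\omega_2(Z_{\omega_1})=(\omega_1\oA\omega_2)(C)$, because $Z_{\omega_1}$ is a fixed bounded operator and $\sigma_j\to\omega_2$ weak$^*$. The other order is symmetric, using $\varphi_C(\rho_i,\sigma_j)=\Tr(\sigma_jT_C\rho_i)$ and $W_{\omega_2}$.
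Part (b) is the same argument with ultralimits in place of net limits. Here $\ulimit{k}{\cU}\Tr(\rho_kA)=\omega_1(A)$ holds for each fixed $A$ by definition of the ultralimit state, and similarly for $\cV$.

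\emph{Part (c).}
This is a computation with transposes. The map $T_C'\colon B(H_2)^*\to\cT(H_1)^*=B(H_1)$ is given by $\langle \tau,T_C'\omega_2\rangle=\langle T_C\tau,\omega_2\rangle$, so $T_C'\omega_2=W_{\omega_2}$. For $\omega_1\in B(H_1)^*$, the bitranspose $T_C''\omega_1\in B(H_2)^{**}$ acts by $\langle T_C''\omega_1,\omega_2\rangle=\langle\omega_1,T_C'\omega_2\rangle=\omega_1(W_{\omega_2})$, which is $(\omega_1\oB\omega_2)(C)$. The identity for $S_C''$ and $\oA$ is symmetric, using $S_C'\omega_1=Z_{\omega_1}$.
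The only care needed is to fix the convention for the pairing $\langle\,\cdot,\cdot\,\rangle$ between a space and its dual (which slot holds the functional) consistently with the paper's bilinear pairing $\langle x,\ell\rangle=\ell(x)$. I expect this bookkeeping, rather than any analytic difficulty, to be the main obstacle, and I will state the pairing explicitly to avoid it.

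\emph{Part (d).}
The map $\omega_2\mapsto(\omega_1\oA\omega_2)(C)=\omega_2(Z_{\omega_1})$ is evaluation at the fixed operator $Z_{\omega_1}\in B(H_2)$. It is therefore weak$^*$ continuous by the definition of the weak$^*$ topology.
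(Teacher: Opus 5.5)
Your proposal is correct and follows the paper's proof essentially step for step. Parts (a) and (b) evaluate the inner limit at the fixed operator $S_C\sigma_j$ (respectively $T_C\rho_i$) and the outer one at $Z_{\omega_1}$ (respectively $W_{\omega_2}$), (c) unwinds the bitranspose using $T_C'\omega_2=W_{\omega_2}$, and (d) is evaluation at $Z_{\omega_1}$. Your extra observation that $Z_{\omega_1}=T_C(\rho)$ when $\omega_1$ is normal is not needed for this lemma, though the paper does use it in the corollary that follows.
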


\begin{proof}
  (a) For a normal state $\sigma$, we have
  \[
    \Tr(\sigma Z_{\omega_1})=\omega_1(S_C\sigma)
    =\lim_i\Tr(\rho_iS_C\sigma)=\lim_i\varphi_C(\rho_i,\sigma),
  \]
  and then $\omega_2(Z_{\omega_1})=\lim_j\Tr(\sigma_jZ_{\omega_1})$.
  The formula for $\oB$ is symmetric.
  (b) The same computation applies, with the limits over $i$ and $j$ replaced by the ultralimits along $\cU$ and $\cV$.
  (c) The pairing $\langle T_C''(\omega_1),\omega_2\rangle$ equals $\omega_1(T_C'\omega_2)=\omega_1(W_{\omega_2})$ by the definition of the bitranspose, and symmetrically for $S_C''$.
  (d) The map is evaluation at the operator $Z_{\omega_1}\in B(H_2)$ of \cref{def:products}.
\end{proof}

The bitranspose formulas of \cref{lem:products} make sense for all $\omega_1\in B(H_1)^*$ and $\omega_2\in B(H_2)^*$ and are bilinear in them;
so extended, the two products are the two \emph{Arens extensions} of the bilinear form $m_C$ to the biduals $B(H_1)^*$ of $\cT(H_1)$ and $B(H_2)^*$ of $\cT(H_2)$~\cite{Arens:1951}, and \emph{Arens regularity} of $m_C$ is the statement that they agree.
In \cref{cor:normal-factor} below, we show that when one of the two states $\omega_1$ and $\omega_2$ is actual, the two products always agree.

\begin{corollary}[Instability requires two idealized factors]\label{cor:normal-factor}
  If $\omega_1$ and $\omega_2$ are states of $B(H_1)$ and of $B(H_2)$, respectively, and one of them is normal, then $(\omega_1\oA\omega_2)(C)=(\omega_1\oB\omega_2)(C)$ for every $C\in B(H_1\otimes H_2)$.
\end{corollary}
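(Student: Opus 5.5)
The plan is to compute both products directly from \cref{def:products}, using that a normal state is evaluation against a trace-class operator. This lets us unwind the transposes of the slice maps into the defining identity of \cref{def:slice-and-bilinear}. By the symmetry of the two definitions it suffices to treat each case separately, and each reduces to a one-line chain of identities.

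\emph{Case 1: $\omega_1$ normal.} Write $\omega_1(A)=\Tr(\rho A)$ with $\rho\in\fD(H_1)$. I will first identify $Z_{\omega_1}=S_C'(\omega_1)$. For every $\tau'\in\cT(H_2)$,
\[
  \Tr(\tau' Z_{\omega_1})=\omega_1(S_C\tau')=\Tr(\rho\,S_C(\tau'))=m_C(\rho,\tau')=\Tr(\tau'\,T_C(\rho)).
\]
Hence $Z_{\omega_1}=T_C(\rho)$, since $B(H_2)=\cT(H_2)^*$. Then $(\omega_1\oA\omega_2)(C)=\omega_2(T_C\rho)$. On the other side, $(\omega_1\oB\omega_2)(C)=\omega_1(W_{\omega_2})=\Tr(\rho W_{\omega_2})$. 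By the defining identity of $W_{\omega_2}=T_C'(\omega_2)$, namely $\Tr(\tau W_{\omega_2})=\omega_2(T_C\tau)$ applied with $\tau=\rho$, this equals $\omega_2(T_C\rho)$. The two values agree.

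\emph{Case 2: $\omega_2$ normal.} Write $\omega_2(B)=\Tr(\sigma B)$. The same computation with the roles exchanged gives $W_{\omega_2}=S_C(\sigma)$, so that $(\omega_1\oB\omega_2)(C)=\omega_1(S_C\sigma)$. Also $(\omega_1\oA\omega_2)(C)=\Tr(\sigma Z_{\omega_1})$, which equals $\omega_1(S_C\sigma)$ by the defining identity of $Z_{\omega_1}$.

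There is no real obstacle here. The only points needing care are two. First, the identification of $Z_{\omega_1}$ (resp.\ $W_{\omega_2}$) with a slice-map image uses uniqueness of the representing operator in $B(H_2)=\cT(H_2)^*$. Second, in each case one of the two products does not even require this identification and follows straight from the defining property of $Z$ or $W$. As an alternative check, one could use \cref{lem:products}(a): choose a net of normal states converging to the idealized factor, and a constant net at the normal factor. Then one iterated limit is trivially equal to the other, because the inner limit over a constant net is just evaluation. I will present the direct computation above as the proof.
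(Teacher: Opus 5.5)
Your proof is correct and is essentially the paper's argument. In each case you identify the operator left by the normal factor with a slice-map image, $Z_{\omega_1}=T_C(\rho)$ and $W_{\omega_2}=S_C(\sigma)$ respectively, and check that both products reduce to the common value $\omega_2(T_C\rho)$ or $\omega_1(S_C\sigma)$, spelling out the duality $B(H_2)=\cT(H_2)^*$ that the paper leaves implicit.
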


\begin{proof}
  If $\omega_1$ is given by a density operator $\rho$, the operator $Z_{\omega_1}$ is $T_C(\rho)$, and the product $(\omega_1\oB\omega_2)(C)$ is $\Tr(\rho\,T_C'\omega_2)=\omega_2(T_C\rho)$;
  both products therefore equal $\omega_2(T_C\rho)$.
  If $\omega_2$ is given by a density operator $\sigma$, the operator $T_C'(\sigma)$ coincides with $S_C(\sigma)$, and both products equal $\omega_1(S_C\sigma)$.
\end{proof}

%#
%## SECTION: Analysis: stability is the double limit property
%#
\section{Analysis: the double limit property and ladders}\label{sec:stability}

\noindent Whether two idealizations can be exchanged depends only on the expectation function.
We therefore study the double limit property for bounded functions on arbitrary products: its formulation through ultrafilters, the ladders that witness its failure, and its preservation under the operations used later.
The same arguments then apply to observables, to their restrictions and to their ultrapowers.

\begin{definition}[Double limit property]\label{def:stable}
  Let $X$ and $Y$ be sets.
  A bounded function $f\colon X\times Y\to\CC$ is \emph{stable} if for all sequences $(x_k)_{k\in\NN}$ in $X$ and $(y_l)_{l\in\NN}$ in $Y$, we have
  \[
    \lim_{l\to\infty}\lim_{k\to\infty}f(x_k,y_l)=\lim_{k\to\infty}\lim_{l\to\infty}f(x_k,y_l)
  \]
  whenever both iterated limits exist.
  A bounded operator $C\in B(H_1\otimes H_2)$, where $H_1$
  and $H_2$ are the Hilbert spaces of Alice’s and Bob’s systems, is \emph{stable} if its expectation
  $\varphi_C(\rho,\sigma)=\Tr\bigl((\rho\otimes\sigma)C\bigr)$ is stable on the product $\fD(H_1)\times\fD(H_2)$ of the sets of density operators on $H_1$ and on $H_2$.
\end{definition}

Two elementary facts are used throughout.
First, every bounded array $(a_{kl})_{k,l\in\NN}$ of complex numbers, with rows indexed by
$k$ and columns by $l$, has subsequences of rows and columns along which both iterated limits exist, and for a finite family of such arrays these subsequences can be chosen common to all arrays of the family:
pass to a subsequence of columns along which every row converges (a diagonal argument), then to a subsequence of rows along which every remaining column converges, and then to further subsequences along which the two outer limits exist.
Second, passing to subsequences does not change iterated limits that already exist.

The basic tool is the following extraction lemma, which turns a gap between two iterated ultrafilter limits into a \emph{ladder}:
an array that is close to one limit on and above the diagonal and close to the other below it.
The limits are the ultralimits along nonprincipal ultrafilters on $\NN$ recalled in the Introduction.

\begin{lemma}[Ladder extraction]\label{lem:ladder}
  Let $(a_{kl})_{k,l\in\NN}$ be a bounded complex array, let $\cU$ and $\cV$ be nonprincipal ultrafilters on $\NN$, and let
  \[
    L_1=\ulimit{k}{\cU}\ulimit{l}{\cV}a_{kl},\qquad L_2=\ulimit{l}{\cV}\ulimit{k}{\cU}a_{kl}
  \]
  be the two iterated limits.
  For every $\varepsilon>0$ there are indices $k_1<k_2<\cdots$ and $l_1<l_2<\cdots$ such that $|a_{k_il_m}-L_1|\le\varepsilon$ for $i\le m$ and $|a_{k_il_m}-L_2|\le\varepsilon$ for $i>m$.
\end{lemma}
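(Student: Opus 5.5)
We build the two index sequences together, by induction, picking $k_1,l_1,k_2,l_2,\dots$ in alternation. Put $b_k=\ulimit{l}{\cV}a_{kl}$ and $c_l=\ulimit{k}{\cU}a_{kl}$, so that $L_1=\ulimit{k}{\cU}b_k$ and $L_2=\ulimit{l}{\cV}c_l$. Each step uses only two facts: a finite intersection of members of an ultrafilter is again a member, and every member of a nonprincipal ultrafilter is infinite, so it contains indices larger than any prescribed bound.

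First we fix the sets we will draw from. The set $A=\{k:|b_k-L_1|\le\varepsilon/2\}$ belongs to $\cU$, and the set $B=\{l:|c_l-L_2|\le\varepsilon/2\}$ belongs to $\cV$. We will choose every $k_i$ in $A$ and every $l_m$ in $B$.

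Next comes the inductive step. Suppose $k_1<\dots<k_{m}$ and $l_1<\dots<l_{m-1}$ have been chosen. To choose $l_m$, we need $|a_{k_il_m}-L_1|\le\varepsilon$ for all $i\le m$. Since $k_i\in A$, each set $\{l:|a_{k_il}-b_{k_i}|\le\varepsilon/2\}$ belongs to $\cV$. We intersect these $m$ sets with $B$; the result is still in $\cV$, hence infinite, so we can take $l_m>l_{m-1}$ in it. For $i\le m$ this gives $|a_{k_il_m}-L_1|\le|a_{k_il_m}-b_{k_i}|+|b_{k_i}-L_1|\le\varepsilon$, and it also keeps $l_m\in B$.

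Then we choose $k_{m+1}$, which must satisfy $|a_{k_{m+1}l_j}-L_2|\le\varepsilon$ for all $j\le m$. Since $l_j\in B$, each set $\{k:|a_{kl_j}-c_{l_j}|\le\varepsilon/2\}$ belongs to $\cU$. We intersect these $m$ sets with $A$ and take $k_{m+1}>k_m$ in the intersection. This gives $|a_{k_{m+1}l_j}-L_2|\le\varepsilon$ for $j\le m$, and it keeps $k_{m+1}\in A$. To start the induction, we take $k_1$ to be any element of $A$.

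Finally we check that the ladder has the required shape. Take $i\le m$. The index $k_i$ was chosen before $l_m$, and $l_m$ was chosen to be close to $L_1$ against all earlier $k_i$, so $|a_{k_il_m}-L_1|\le\varepsilon$. Take $i>m$. The index $l_m$ was chosen before $k_i$, and $k_i$ was chosen to be close to $L_2$ against all earlier $l_j$, so $|a_{k_il_m}-L_2|\le\varepsilon$.

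There is no serious obstacle. The one point that needs care is the bookkeeping: every $k_i$ must stay in $A$ and every $l_m$ in $B$, because this is what makes each new intersection a member of the right ultrafilter.
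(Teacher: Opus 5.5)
Your proof is correct and is essentially the paper's argument. The paper also chooses $k_1,l_1,k_2,l_2,\dots$ alternately, each from a finite intersection of ultrafilter sets. These sets keep the row limit of $k_m$ (your $b_{k_m}$) within $\varepsilon/2$ of $L_1$, the column limit of $l_m$ (your $c_{l_m}$) within $\varepsilon/2$ of $L_2$, and each new entry within $\varepsilon/2$ of the relevant earlier row or column limit, and the same triangle inequalities finish the proof.

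One point of wording needs correcting. A set such as $\{l:|a_{k_il}-b_{k_i}|\le\varepsilon/2\}$ belongs to $\cV$ by the definition of $b_{k_i}$, whatever $k_i$ is. Keeping $k_i$ in $A$ and $l_m$ in $B$ is needed only for the triangle-inequality step, not, as your closing remark says, to make the intersections members of the ultrafilters.
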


\begin{proof}
  The indices are chosen alternately, each in a set that belongs to the relevant ultrafilter.
  We write $\alpha_k$ for the row limit $\ulimit{l}{\cV}a_{kl}$ and $\beta_l$ for the column limit $\ulimit{k}{\cU}a_{kl}$.
  Given $k_1,l_1,\dots,k_{m-1},l_{m-1}$, choose $k_m>k_{m-1}$ in the set
  \[
    \{k:\ |\alpha_k-L_1|\le\varepsilon/2\ \text{and}\
    |a_{kl_i}-\beta_{l_i}|\le\varepsilon/2\ \text{for all }i<m\},
  \]
  which belongs to $\cU$, and then $l_m>l_{m-1}$ in the set
  \[
    \{l:\ |\beta_l-L_2|\le\varepsilon/2\ \text{and}\
    |a_{k_il}-\alpha_{k_i}|\le\varepsilon/2\ \text{for all }i\le m\},
  \]
  which belongs to $\cV$.
  For $i\le m$, we have $|a_{k_il_m}-L_1|\le|a_{k_il_m}-\alpha_{k_i}|+|\alpha_{k_i}-L_1|\le\varepsilon$, and for $i>m$, the choice of $k_i$ after $l_m$ ensures that $|a_{k_il_m}-L_2|\le|a_{k_il_m}-\beta_{l_m}|+|\beta_{l_m}-L_2|\le\varepsilon$.
\end{proof}

\begin{corollary}\label{cor:ultra-stable}
  Let $X$ and $Y$ be sets, and let $f\colon X\times Y\to\CC$ be a bounded function.
  Then the function $f$ is stable if and only if, for all sequences $(x_k)_{k\in\NN}$ in $X$ and $(y_l)_{l\in\NN}$ in $Y$ and all nonprincipal ultrafilters $\cU$ and $\cV$ on $\NN$, we have
  \[
    \ulimit{k}{\cU}\ulimit{l}{\cV}f(x_k,y_l)=\ulimit{l}{\cV}\ulimit{k}{\cU}f(x_k,y_l).
  \]
\end{corollary}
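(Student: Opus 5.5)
Both directions go through the ladder extraction lemma (\cref{lem:ladder}) and the subsequence facts recorded just before it.

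\emph{Ultrafilter condition implies stability.} Let $(x_k)$ and $(y_l)$ be sequences for which both ordinary iterated limits exist, and fix any nonprincipal ultrafilters $\cU$ and $\cV$ on $\NN$. Each ultralimit equals the ordinary limit when the latter exists. So for every $k$ we have $\ulimit{l}{\cV}f(x_k,y_l)=\lim_l f(x_k,y_l)$, and hence $\ulimit{k}{\cU}\ulimit{l}{\cV}f(x_k,y_l)=\lim_k\lim_l f(x_k,y_l)$. The same argument in the other order gives $\ulimit{l}{\cV}\ulimit{k}{\cU}f(x_k,y_l)=\lim_l\lim_k f(x_k,y_l)$. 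The assumed equality of the ultralimits then gives the double limit identity.

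\emph{Stability implies the ultrafilter condition.} We argue by contraposition. Suppose that for some sequences and ultrafilters the two iterated ultralimits differ. Call them $L_1=\ulimit{k}{\cU}\ulimit{l}{\cV}f(x_k,y_l)$ and $L_2=\ulimit{l}{\cV}\ulimit{k}{\cU}f(x_k,y_l)$, and set $\varepsilon=|L_1-L_2|/3>0$.

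Apply \cref{lem:ladder} to $a_{kl}=f(x_k,y_l)$. It gives indices $k_1<k_2<\cdots$ and $l_1<l_2<\cdots$ with $|a_{k_il_m}-L_1|\le\varepsilon$ for $i\le m$ and $|a_{k_il_m}-L_2|\le\varepsilon$ for $i>m$.

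Now consider the subsequences $x'_i=x_{k_i}$ and $y'_m=y_{l_m}$, and pass to further subsequences, common to rows and columns, along which both iterated limits of the array $f(x'_i,y'_m)$ exist. Passing to subsequences preserves the ladder inequalities, because they are monotone in the indices: if $i<i'$ then $k_i<k_{i'}$, so the relation "row index $\le$ column index" is kept.

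Read off the two iterated limits from the ladder. For a fixed row $i$, every large column $m\ge i$ gives a value within $\varepsilon$ of $L_1$, so $\lim_m\lim_i\ldots$ is not the order we want first; rather, $\lim_i\lim_m f(x'_i,y'_m)$ lies within $\varepsilon$ of $L_1$. For a fixed column $m$, every large row $i>m$ gives a value within $\varepsilon$ of $L_2$, so $\lim_m\lim_i f(x'_i,y'_m)$ lies within $\varepsilon$ of $L_2$.

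Since $|L_1-L_2|=3\varepsilon$, these two iterated limits differ. Both exist, so $f$ is not stable.

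The only delicate point is the bookkeeping in this last step: making sure the ladder survives the extraction of convergent subsequences, and that the closed $\varepsilon$-bands stay disjoint in the limit. The choice $\varepsilon=|L_1-L_2|/3$ handles the second concern.
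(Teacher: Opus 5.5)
Your proof is correct and follows the paper's argument. One direction holds because ordinary limits agree with ultralimits when they exist. For the other, you apply the ladder extraction lemma with $2\varepsilon<|L_1-L_2|$, pass to subsequences along which both ordinary iterated limits exist, and read them off within $\varepsilon$ of $L_1$ and of $L_2$ respectively.

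One justification should be tightened. After separate row and column extractions $i_p$ and $m_q$, the re-indexed array need not be a ladder, so "monotonicity" is not the right reason. What your reading-off step actually uses is that for fixed $p$ one has $i_p\le m_q$ for all large $q$, and symmetrically $i_p>m_q$ for fixed $q$ and large $p$. This is exactly the bookkeeping the paper states.
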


\begin{proof}
  If ordinary iterated limits exist, they equal the ultrafilter ones;
  this gives one direction.
  For the other, suppose that, for some sequences $(x_k)_{k\in\NN}$ in $X$ and $(y_l)_{l\in\NN}$ in $Y$ and some nonprincipal ultrafilters $\cU$ and $\cV$ on $\NN$, the two ultrafilter iterated limits $L_1$ and $L_2$ differ.
  Apply \cref{lem:ladder} to $a_{kl}=f(x_k,y_l)$ with $2\varepsilon<|L_1-L_2|$, and pass to further subsequences $(k_{i_p})_{p\in\NN}$ and $(l_{m_q})_{q\in\NN}$ of the indices it provides, along which both ordinary iterated limits exist.
  For fixed $p$, we have $i_p\le m_q$ for all large $q$, so the limit over $q$ is within $\varepsilon$ of $L_1$, and hence so is the iterated limit $\lim_p\lim_q$.
  Symmetrically, the iterated limit $\lim_q\lim_p$ is within $\varepsilon$ of $L_2$.
  The two iterated limits differ, so the function $f$ is unstable.
\end{proof}

Once \cref{lem:ladder} has been applied, the proof uses only the ladder inequalities, and it therefore yields a criterion that we use repeatedly:
if $f$ is a bounded function on $X\times Y$ and there are sequences
$(x_k)_{k\in\NN}$ in $X$ and $(y_l)_{l\in\NN}$ in
$Y$, indices $k_1<k_2<\cdots$ and $l_1<l_2<\cdots$, complex numbers
$L_1$ and $L_2$, and a real number $\varepsilon$ with
$2\varepsilon<|L_1-L_2|$ such that the values $a_{k_il_m}=f(x_{k_i},y_{l_m})$ satisfy $|a_{k_il_m}-L_1|\le\varepsilon$ for $i\le m$ and $|a_{k_il_m}-L_2|\le\varepsilon$ for $i>m$, then the function $f$ is unstable.

\begin{lemma}[Preservation of stability]\label{lem:preservation}
  \emergencystretch=3em
  Let $X$ and $Y$ be sets.
  \begin{enumerate}[label=\textup{(\alph*)}]
  \item
    \textup{(Sums with one-variable coefficients.)} For every positive integer
    $r$, if $f_1,\dots,f_r$ are stable functions on $X\times Y$ and the functions $c_1,\dots,c_r\colon\allowbreak X\to\CC$ and $d_1,\dots,d_r\colon\allowbreak Y\to\CC$ are bounded, then the function $(x,y)\mapsto\sum_{j=1}^rc_j(x)d_j(y)f_j(x,y)$ is stable.
    In particular, linear combinations of stable functions and separated sums $\sum_{j=1}^rc_j(x)d_j(y)$ are stable.
  \item
    \textup{(Uniform limits.)} If $f$ and $f'$ are bounded functions on
    $X\times Y$, the function $f'$ is stable, and a real number
    $\varepsilon\ge0$ satisfies $\sup_{X\times Y}|f-f'|\le\varepsilon$, then, for all sequences $(x_k)_{k\in\NN}$ in $X$ and $(y_l)_{l\in\NN}$ in $Y$ along which both iterated limits of $f$ exist, these two limits differ by at most $2\varepsilon$.
    In particular, uniform limits of stable functions are stable.
  \item
    \textup{(Composition.)} If $f$ is stable on $X\times Y$, then for all sets $X'$ and $Y'$ and all maps $g\colon X'\to X$ and $h\colon Y'\to Y$, the function $(x',y')\mapsto f(g(x'),h(y'))$ is stable on $X'\times Y'$.
  \item
    \textup{(Reduction to states.)} For $C\in B(H_1\otimes H_2)$, the expectation
    $\varphi_C$ is stable on $\fD(H_1)\times\fD(H_2)$ if and only if the bilinear form $m_C(\tau,\tau')=\Tr\bigl((\tau\otimes\tau')C\bigr)$ of \cref{def:slice-and-bilinear} is stable on the product of the unit balls of $\cT(H_1)$ and $\cT(H_2)$.
  \end{enumerate}
\end{lemma}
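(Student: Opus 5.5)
I would prove the four parts in the order (c), (b), (a), (d), since the later parts reduce to the earlier ones together with the ultrafilter criterion of \cref{cor:ultra-stable} and the subsequence facts recorded before \cref{lem:ladder}.

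Part (c) is immediate. Sequences $(x'_k)$ and $(y'_l)$ map under $g$ and $h$ to sequences $(g(x'_k))$ and $(h(y'_l))$, and the iterated limits of the composite are literally those of $f$ along the image sequences.

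For (b), I take sequences along which both iterated limits of $f$ exist. I pass to common subsequences of rows and columns along which both iterated limits of $f'$ also exist; the two elementary facts allow this without changing the limits of $f$. Stability of $f'$ makes its two iterated limits equal. Each iterated limit of $f$ is within $\varepsilon$ of the corresponding one for $f'$, because $|f-f'|\le\varepsilon$ pointwise survives both limits. Hence the two iterated limits of $f$ differ by at most $2\varepsilon$. For the uniform-limit statement I apply this with $f'=f_n$ and $\varepsilon_n\to0$.

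For (a), I use \cref{cor:ultra-stable}. I fix sequences and nonprincipal ultrafilters $\cU$ and $\cV$, and write $\gamma_j=\ulimit{k}{\cU}c_j(x_k)$ and $\eta_j=\ulimit{l}{\cV}d_j(y_l)$. The key step is that the one-variable factors pull out of both iterated ultralimits:
\[
\ulimit{k}{\cU}\ulimit{l}{\cV}c_j(x_k)d_j(y_l)f_j(x_k,y_l)=\ulimit{k}{\cU}c_j(x_k)\,\eta_j\,\ulimit{l}{\cV}f_j(x_k,y_l).
\]
This holds because the ultralimit of a product of bounded sequences is the product of the ultralimits. The outer limit then gives $\gamma_j\eta_j\,\ulimit{k}{\cU}\ulimit{l}{\cV}f_j$, and the other order gives $\gamma_j\eta_j$ times the reversed iterated limit of $f_j$. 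Stability of each $f_j$, via \cref{cor:ultra-stable}, makes the two expressions equal. Summing over $j$ shows the two iterated ultralimits of the sum agree, so the sum is stable. For the special cases, I take the $f_j$ constant, or take constant coefficients.

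For (d), one direction is (c) applied to the inclusions $\fD(H_i)\hookrightarrow$ unit ball of $\cT(H_i)$. For the converse, I decompose each $\tau$ in the unit ball as
\[
\tau=a_1\rho_1-a_2\rho_2+ia_3\rho_3-ia_4\rho_4,
\]
with $\rho_j\in\fD(H_1)$ and coefficients $0\le a_j\le1$. This comes from splitting $\tau$ into hermitian parts and then into positive and negative parts, each of trace at most $\|\tau\|_1$; it is the trace-class analogue of \cref{lem:density}. I choose such a decomposition $\tau\mapsto(a_j(\tau),\rho_j(\tau))$ by a fixed selection, and decompose $\tau'$ in the same way. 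Bilinearity then gives
\[
m_C(\tau,\tau')=\sum_{j,j'}\epsilon_{jj'}\,a_j(\tau)\,a'_{j'}(\tau')\,\varphi_C\bigl(\rho_j(\tau),\rho'_{j'}(\tau')\bigr)
\]
with unimodular constants $\epsilon_{jj'}$. Each term $\varphi_C(\rho_j(\cdot),\rho'_{j'}(\cdot))$ is stable by (c), and (a) then makes the sum stable.

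The only mildly delicate point is the bound on the decomposition constants: for a zero part the $\rho_j$ can be chosen arbitrarily. Beyond that I expect no obstacle; the main care needed is the ultralimit bookkeeping in (a).
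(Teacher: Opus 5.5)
Your proposal is correct and essentially the paper's proof: parts (b), (c) and (d) match it step for step, including the sixteen-term decomposition through the positive and negative parts of the hermitian parts, with a fixed state chosen when a part vanishes. The only variation is in (a), where you pull the one-variable factors out of iterated ultralimits and use \cref{cor:ultra-stable} in both directions, whereas the paper passes to subsequences along which $c_j(x_k)$ and $d_j(y_l)$ converge and the iterated limits of each $f_j$ exist; your route is legitimate because \cref{cor:ultra-stable} rests only on \cref{lem:ladder} and not on this lemma.
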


\begin{proof}
  (a) Let $F$ denote the function $\sum_jc_jd_jf_j$, and let $(x_k)_{k\in\NN}$ in $X$ and $(y_l)_{l\in\NN}$ in $Y$ be sequences along which both iterated limits of $F$ exist.
  Pass to subsequences along which, for every $j$, both iterated limits of $f_j$ exist and the numbers $c_j(x_k)$ and $d_j(y_l)$ converge to limits $c_j^\infty$ and $d_j^\infty$, respectively.
  By stability, the two iterated limits of $f_j$ have a common value $\Lambda_j$, and both iterated limits of $F$ along the subsequences equal $\sum_jc_j^\infty d_j^\infty\Lambda_j$.

  (b) Let $L_1$ and $L_2$ be the two iterated limits of $f$ along sequences $(x_k)_{k\in\NN}$ in $X$ and $(y_l)_{l\in\NN}$ in $Y$ along which both exist.
  Pass to subsequences along which both iterated limits of $f'$ exist.
  They have a common value $\Lambda$, and both $|L_1-\Lambda|$ and $|L_2-\Lambda|$ are at most $\varepsilon$.

  (c) Sequences in $X'$ and $Y'$ are mapped to sequences in $X$ and $Y$.

  (d) One direction is trivial.
  For the other, decompose each $\tau$ in the unit ball of
  $\cT(H)$, where $H$ is $H_1$ or $H_2$, as $\tau=c_1\rho_1-c_2\rho_2+ic_3\rho_3-ic_4\rho_4$, using the positive and negative parts of the hermitian parts of $\tau$, with $\rho_j\in\fD(H)$ and $0\le c_j\le1$ (choosing a fixed state when a part vanishes).
  These choices define maps $\tau\mapsto\rho_j(\tau)$ and bounded coefficients $\tau\mapsto c_j(\tau)$.
  For $\tau$ in the unit ball of $\cT(H_1)$ and $\tau'$
  in that of $\cT(H_2)$, with $c'_{j'}$ and $\rho'_{j'}$
  the coefficients and states obtained in the same way from
  $\tau'$, the value $m_C(\tau,\tau')$ is then a sum of sixteen terms $c_j(\tau)c'_{j'}(\tau')\varphi_C(\rho_j(\tau),\rho'_{j'}(\tau'))$, each multiplied by one of the factors $\pm1$ or $\pm i$, so parts (a) and (c) apply.
\end{proof}

%#
%### SUBSECTION: Historical remarks
%#
\subsection{Historical remarks}\label{sec:hist-stability}
The double limit condition for a function on a product goes back to Grothendieck~\cite{Grothendieck:1952}, as a criterion for weak compactness in spaces of continuous functions.
Independently, Shelah introduced the \emph{order property} of a formula and defined stable theories as those in which no formula has it~\cite{Shelah:1971b};
a formula has the order property exactly when it has an infinite ladder in some model of the theory or, by compactness, ladders of every finite length.
Krivine and Maurey introduced stable Banach spaces, those in which the function $(x,y)\mapsto\|x+y\|$ satisfies the double limit condition on bounded sets~\cite{Krivine-Maurey:1981}.
Iovino’s stability theory for real-valued structures takes Krivine--Maurey stability as its point of departure, and in it Shelah’s order property reappears as the two-valued case of the double limit condition~\cite{Iovino:1994a,Iovino:1999a};
in~\cite{Iovino:1999c} he characterized the stable functions on a product of two sets as those of the form
$(x,y)\mapsto\langle u(x),v(y)\rangle$, for bounded maps $u$ into a reflexive Banach space and $v$ into its dual.
Ben Yaacov and Usvyatsov~\cite{BenYaacov-Usvyatsov:2010} developed local stability in the continuous logic of Ben Yaacov, Berenstein, Henson and Usvyatsov~\cite{BenYaacov-Berenstein-Henson-Usvyatsov:2008},
and Ben Yaacov derived definability of types directly from Grothendieck’s theorem~\cite{BenYaacov:2014a}.
The extraction of ladders in \cref{lem:ladder} follows an argument of Iovino~\cite{Iovino:1999c} (see also~\cite[Chapter~1]{Iovino:0000e}).

%#
%## SECTION: Experiments: the swap test, interference and equality tests are stable
%#
\section{Examples: the swap test, interference and equality tests are stable}\label{sec:physics}

\noindent We now compare two sources of stability:
approximation in operator norm by finite sums of products, and factorization of the expectation through a Hilbert space.
The swap test in infinite dimension, and equality tests for pure point spectra with infinitely many common eigenvalues, are stable although they lie outside the minimal tensor product.
Order comparisons, by contrast, obstruct stability even though they are diagonal in a product basis;
the distinction between stable and unstable observables is therefore present in classical probability as well as in quantum measurement.

%#
%### SUBSECTION: The minimal tensor product
%#
\subsection{The minimal tensor product}
We write $B(H_1)\odot B(H_2)$ for the space of finite sums of products $A\otimes B$ of bounded operators $A$ on $H_1$ and $B$ on $H_2$, the Hilbert spaces of Alice’s and Bob’s systems.
Its norm closure, the minimal C$^*$-tensor product $B(H_1)\otimes_{\min}B(H_2)$, consists of the uniform limits of finite sums of products of a bounded operator on Alice’s system with one on Bob’s.

The distance to the minimal tensor product bounds the failure of stability.
In the next theorem, the distance of an operator to $B(H_1)\odot B(H_2)$ is taken in the operator norm.

\begin{theorem}\label{thm:stable}
  Let $C\in B(H_1\otimes H_2)$.
  \begin{enumerate}[label=\textup{(\alph*)}]
  \item
    For all sequences $(\rho_k)_{k\in\NN}$ in $\fD(H_1)$
    and $(\sigma_l)_{l\in\NN}$ in $\fD(H_2)$ along which both iterated limits of
    $\varphi_C(\rho_k,\sigma_l)$ exist, these two limits differ by at most $2\operatorname{dist}\bigl(C,\allowbreak B(H_1)\odot B(H_2)\bigr)$.
  \item
    If $C\in B(H_1)\otimes_{\min}B(H_2)$, then $C$ is stable.
  \end{enumerate}
\end{theorem}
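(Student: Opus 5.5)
The plan is to reduce both parts to \cref{lem:preservation}, using the observation that the expectation of a finite sum of products is a separated sum.

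For $C'=\sum_{j=1}^r A_j\otimes B_j\in B(H_1)\odot B(H_2)$ we have, for all density operators,
\[
  \varphi_{C'}(\rho,\sigma)=\sum_{j=1}^r\Tr(\rho A_j)\,\Tr(\sigma B_j),
\]
since $\Tr\bigl((\rho\otimes\sigma)(A\otimes B)\bigr)=\Tr(\rho A)\Tr(\sigma B)$. Each coefficient $\rho\mapsto\Tr(\rho A_j)$ is bounded by $\|A_j\|$, and similarly on Bob's side. So $\varphi_{C'}$ is a separated sum $\sum_j c_j(\rho)d_j(\sigma)$ of bounded one-variable functions. By the particular case of \cref{lem:preservation}(a), $\varphi_{C'}$ is stable on $\fD(H_1)\times\fD(H_2)$. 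Alternatively, one can argue directly: pass to subsequences along which each $c_j(\rho_k)$ and $d_j(\sigma_l)$ converges, and both iterated limits become $\sum_j c_j^\infty d_j^\infty$.

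Next, for any such $C'$ and any density operators,
\[
  |\varphi_C(\rho,\sigma)-\varphi_{C'}(\rho,\sigma)|=|\Tr((\rho\otimes\sigma)(C-C'))|\le\|\rho\otimes\sigma\|_1\|C-C'\|=\|C-C'\|,
\]
because $\rho\otimes\sigma$ is a density operator on $H_1\otimes H_2$ and hence has trace norm one.

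For part~(a), apply \cref{lem:preservation}(b) with $f=\varphi_C$, $f'=\varphi_{C'}$ and $\varepsilon=\|C-C'\|$. Along the given sequences, along which both iterated limits of $\varphi_C$ exist, these limits differ by at most $2\|C-C'\|$. Taking the infimum over $C'\in B(H_1)\odot B(H_2)$ gives the bound $2\operatorname{dist}(C,B(H_1)\odot B(H_2))$.

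Part~(b) follows at once: if $C$ lies in the norm closure, the distance is $0$, so the two iterated limits coincide whenever both exist, which is exactly stability in the sense of \cref{def:stable}.

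There is no real obstacle here. The only points that need care are that the trace-norm bound $\|\rho\otimes\sigma\|_1=1$ holds, and that \cref{lem:preservation}(b) is stated for a fixed $\varepsilon$, so the infimum has to be taken after obtaining the bound for each $C'$.
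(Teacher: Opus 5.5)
Your proposal is correct and matches the paper's proof: the expectation of a finite sum of products is a separated sum, which is stable by \cref{lem:preservation}(a); the estimate $|\varphi_C-\varphi_{C'}|\le\|C-C'\|$ and \cref{lem:preservation}(b) then give the bound $2\|C-C'\|$, and taking the infimum over $C'$ yields (a), from which (b) follows because the distance is zero on the norm closure. Your explicit check that $\|\rho\otimes\sigma\|_1=1$ fills in a step the paper leaves implicit.
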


\begin{proof}
  Finite sums of products are stable, and stability survives uniform approximation.
  Indeed, for a finite sum $C'=\sum_{j=1}^rA_j\otimes B_j$
  of products, with $A_j\in B(H_1)$ and $B_j\in B(H_2)$, the expectation $\varphi_{C'}(\rho,\sigma)=\sum_j\Tr(\rho A_j)\Tr(\sigma B_j)$ is a separated sum, stable by \cref{lem:preservation}(a), and $|\varphi_C-\varphi_{C'}|\le\|C-C'\|$;
  part~(b) of the same lemma, applied with $\varepsilon=\|C-C'\|$, bounds the difference of the two iterated limits by $2\|C-C'\|$, and the infimum over all such $C'$ gives~(a).
  Part~(b) follows, since the distance vanishes on the norm closure.
\end{proof}

\begin{corollary}\label{cor:compact-and-findim}
  \leavevmode
  \begin{enumerate}[label=\textup{(\alph*)}]
  \item
    Every compact operator on $H_1\otimes H_2$ is stable.
  \item
    If $H_1$ or $H_2$ is finite-dimensional, then every $C\in B(H_1\otimes H_2)$ is stable.
  \end{enumerate}
\end{corollary}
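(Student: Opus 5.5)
Both parts reduce to \cref{thm:stable}(b): we show that the operators in question lie in the minimal tensor product $B(H_1)\otimes_{\min}B(H_2)$.

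For part~(a), let $C$ be compact on $H_1\otimes H_2$. Every compact operator on a Hilbert space is a norm limit of finite-rank operators. So it suffices to show that every finite-rank operator is a norm limit of elements of $B(H_1)\odot B(H_2)$. A finite-rank operator is a finite sum of rank-one operators $|\psi\rangle\langle\chi|$ with $\psi,\chi\in H_1\otimes H_2$, and the norm of such an operator is $\|\psi\|\,\|\chi\|$. Hence $(\psi,\chi)\mapsto|\psi\rangle\langle\chi|$ is jointly norm-continuous, and it is enough to treat elementary tensors $\psi=\psi_1\otimes\psi_2$ and $\chi=\chi_1\otimes\chi_2$; finite sums of these are dense in $H_1\otimes H_2$. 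For such vectors,
\[
  |\psi_1\otimes\psi_2\rangle\langle\chi_1\otimes\chi_2|
  =|\psi_1\rangle\langle\chi_1|\otimes|\psi_2\rangle\langle\chi_2|
  \in B(H_1)\odot B(H_2).
\]
By bilinearity, finite sums of elementary tensors also give operators in $B(H_1)\odot B(H_2)$. Therefore $C$ lies in the norm closure $B(H_1)\otimes_{\min}B(H_2)$, and \cref{thm:stable}(b) gives stability.

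For part~(b), suppose that $H_2$ has finite dimension $n$; the case of $H_1$ is symmetric. Fix an orthonormal basis $e_1,\dots,e_n$ of $H_2$, and let $E_{ab}=|e_a\rangle\langle e_b|$ be the corresponding matrix units. Every $C\in B(H_1\otimes H_2)$ can be written in block form as
\[
  C=\sum_{a,b=1}^nC_{ab}\otimes E_{ab},
\]
where $C_{ab}\in B(H_1)$ is determined by $\langle\xi,C_{ab}\eta\rangle=\langle\xi\otimes e_a,C(\eta\otimes e_b)\rangle$. Each $C_{ab}$ is bounded, with $\|C_{ab}\|\le\|C\|$. To check the identity, compare both sides on vectors $\eta\otimes e_b$; these span $H_1\otimes H_2$ because $H_2$ is finite-dimensional, so $H_1\otimes H_2\cong H_1^{\oplus n}$ and $C$ is an $n\times n$ operator matrix. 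Thus $C$ already lies in $B(H_1)\odot B(H_2)$, and \cref{thm:stable}(b) applies.

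The only step needing care is the density argument in~(a). It depends on two facts: the map $(\psi,\chi)\mapsto|\psi\rangle\langle\chi|$ is continuous in norm, and the algebraic tensor product is dense in the Hilbert-space tensor product. Both are standard.
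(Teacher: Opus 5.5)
Your proof is correct and follows essentially the same route as the paper. In both parts you place the operator in $B(H_1)\otimes_{\min}B(H_2)$ and invoke \cref{thm:stable}(b): for (a) via the rank-one operators $|\psi_1\otimes\psi_2\rangle\langle\chi_1\otimes\chi_2|=|\psi_1\rangle\langle\chi_1|\otimes|\psi_2\rangle\langle\chi_2|$, and for (b) via a finite operator-matrix decomposition over matrix units. The only difference is that you spell out the density and continuity details that the paper leaves implicit.
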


\begin{proof}
  (a) The rank-one operators $|\psi\otimes\chi\rangle\langle\psi'\otimes\chi'|=|\psi\rangle\langle\psi'|\otimes|\chi\rangle\langle\chi'|$, for
  $\psi,\psi'\in H_1$ and $\chi,\chi'\in H_2$, span a dense subspace of the compact operators, so \cref{thm:stable} applies.
  (b) If $H_1$ has an orthonormal basis $e_1,\dots,e_n$, then the operator $C$ equals $\sum_{i,j\le n}|e_i\rangle\langle e_j|\otimes C_{ij}$ for suitable $C_{ij}\in B(H_2)$, and therefore lies in $B(H_1)\odot B(H_2)$;
  the case of a finite-dimensional $H_2$ is symmetric.
\end{proof}

In particular, every finite-rank projection is stable, such as the projection onto a Bell state, a maximally entangled state of two qubits (quantum two-level systems).
The slice map $T_C$ of \cref{def:slice-and-bilinear} gives a necessary condition for membership in the minimal tensor product.

\begin{proposition}\label{prop:notmin}
  If $C\in B(H_1)\otimes_{\min}B(H_2)$, then the slice map $T_C$ is compact.
\end{proposition}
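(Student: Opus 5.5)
The plan is to compute $T_C$ on elementary tensors, check that it is compact there, and then pass to norm limits. Compact operators form a norm-closed subspace of bounded operators from $\cT(H_1)$ to $B(H_2)$, and $C\mapsto T_C$ is linear. So it suffices to prove two things: that $T_C$ has finite rank when $C$ lies in $B(H_1)\odot B(H_2)$, and that $\|T_C-T_{C'}\|\le\|C-C'\|$.

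For the first point, take $C=\sum_{j=1}^rA_j\otimes B_j$. For every $\tau\in\cT(H_1)$ and $\tau'\in\cT(H_2)$ we have $m_C(\tau,\tau')=\sum_j\Tr(\tau A_j)\Tr(\tau' B_j)$. By the defining identity of \cref{def:slice-and-bilinear}, this means $T_C(\tau)=\sum_j\Tr(\tau A_j)\,B_j$, as already noted after that definition for a single product. Hence $T_C$ has range in the span of $B_1,\dots,B_r$, so it has rank at most $r$ and is compact.

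For the second point, linearity of $C\mapsto m_C$ in $C$ gives $T_C-T_{C'}=T_{C-C'}$, and every slice map has norm at most the norm of its operator, as stated after \cref{def:slice-and-bilinear}. Thus $\|T_C-T_{C'}\|\le\|C-C'\|$. If $C$ lies in the norm closure of $B(H_1)\odot B(H_2)$, choose finite sums $C_n\to C$ in norm. Then the finite-rank operators $T_{C_n}$ converge to $T_C$ in operator norm, and $T_C$ is compact because the compact operators are norm-closed.

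There is no real obstacle here. The only point that needs a check is the bound $\|T_D\|\le\|D\|$. It follows from $|\Tr((\tau\otimes\tau')D)|\le\|\tau\otimes\tau'\|_1\|D\|=\|\tau\|_1\|\tau'\|_1\|D\|$ together with the duality $\cT(H_2)^*=B(H_2)$, which gives $\|T_D(\tau)\|=\sup_{\|\tau'\|_1\le1}|m_D(\tau,\tau')|$.
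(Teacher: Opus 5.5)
Your proof is correct and follows the paper's argument: finite sums of products give finite-rank slice maps, the bound $\|T_C-T_{C'}\|\le\|C-C'\|$ transfers norm approximation to the slice maps, and norm limits of finite-rank operators are compact. Your verification of the norm bound through $\|\tau\otimes\tau'\|_1=\|\tau\|_1\|\tau'\|_1$ and the duality $\cT(H_2)^*=B(H_2)$ fills in the one detail the paper states without proof.
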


\begin{proof}
  The slice map $T_{A\otimes B}=\Tr(\,\cdot\,A)B$ has rank at most one, and $\|T_C-T_{C'}\|\le\|C-C'\|$ for every $C'\in B(H_1\otimes H_2)$, in particular for the finite sums of products that approximate $C$;
  norm limits of finite-rank operators are compact.
\end{proof}

%#
%### SUBSECTION: Factorization through a Hilbert space
%#
\subsection{Factorization through a Hilbert space}
In \cref{sec:swap} and \cref{prop:comparison} we show that several physically basic observables lie outside the minimal tensor product and are nevertheless stable,
because the expectation of each is the inner product of a vector that depends only on Alice’s preparation with a vector that depends only on Bob’s.

\begin{proposition}\label{prop:hilbertfactor}
  Let $X$ and $Y$ be sets, let $\cH$ be a Hilbert space, and let $u\colon X\to\cH$ and $v\colon Y\to\cH$ be bounded maps.
  Then the function $f(x,y)=\langle u(x),v(y)\rangle$ is stable.
  In particular, if $C\in B(H_1\otimes H_2)$ and there are bounded maps
  $u\colon\fD(H_1)\to\cH$ and $v\colon\fD(H_2)\to\cH$
  such that $\varphi_C(\rho,\sigma)=\langle u(\rho),v(\sigma)\rangle$ for all $\rho\in\fD(H_1)$ and $\sigma\in\fD(H_2)$, then $C$ is stable.
\end{proposition}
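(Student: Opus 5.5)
The plan is to use weak compactness of bounded sets in a Hilbert space: bounded sequences have weakly convergent subsequences, and the inner product is continuous in each variable for the weak topology. Fix sequences $(x_k)_{k\in\NN}$ in $X$ and $(y_l)_{l\in\NN}$ in $Y$ along which both iterated limits of $f(x_k,y_l)$ exist. Since passing to subsequences does not change iterated limits that already exist, we may pass to subsequences freely. The vectors $u(x_k)$ are bounded, so a subsequence converges weakly to some $a\in\cH$; this uses that closed balls of a Hilbert space are weakly sequentially compact, by reflexivity or directly by a diagonal argument on an orthonormal basis of the separable closed span. Likewise a further subsequence of $v(y_l)$ converges weakly to some $b\in\cH$.

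Next compute the two iterated limits. For fixed $l$, the map $w\mapsto\langle w,v(y_l)\rangle$ is a (conjugate-)linear bounded functional, so $\lim_k\langle u(x_k),v(y_l)\rangle=\langle a,v(y_l)\rangle$. Then $\lim_l\langle a,v(y_l)\rangle=\langle a,b\rangle$, again by weak convergence, since $w\mapsto\langle a,w\rangle$ is a bounded linear functional. Symmetrically, $\lim_l\langle u(x_k),v(y_l)\rangle=\langle u(x_k),b\rangle$ for each $k$, and $\lim_k\langle u(x_k),b\rangle=\langle a,b\rangle$. Both iterated limits therefore equal $\langle a,b\rangle$, so $f$ is stable in the sense of \cref{def:stable}.

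For the ``in particular'' statement, apply the first part with $X=\fD(H_1)$ and $Y=\fD(H_2)$; $C$ is stable by definition because $\varphi_C$ is. Boundedness of $f$ follows from Cauchy--Schwarz. The only point that needs care is the weak sequential compactness of bounded sets in a possibly nonseparable $\cH$. To handle it, restrict to the closed span of the countably many vectors $u(x_k)$ and $v(y_l)$, which is separable, and then use the diagonal argument on coordinates together with the Riesz representation theorem; for vectors in this subspace, weak convergence there suffices for all the inner products involved. The argument is otherwise routine. Alternatively, one could use ultrafilter limits via \cref{cor:ultra-stable}, taking the weak limits along $\cU$ and $\cV$, which exist by weak compactness of balls; this avoids subsequences entirely.
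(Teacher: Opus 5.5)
Your proof is correct and follows the paper's argument: you pass to subsequences along which $u(x_k)$ and $v(y_l)$ converge weakly to $a$ and $b$, and both iterated limits then equal $\langle a,b\rangle$. Your extra remarks, on weak sequential compactness when $\cH$ is nonseparable and on the boundedness of $f$ via Cauchy--Schwarz, make explicit details that the paper leaves implicit.
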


\begin{proof}
  Bounded sets in a Hilbert space are relatively weakly sequentially compact.
  Given sequences $(x_k)_{k\in\NN}$ in $X$ and $(y_l)_{l\in\NN}$
  in $Y$ along which both iterated limits exist, pass to subsequences $(x_{k_i})_{i\in\NN}$ and $(y_{l_j})_{j\in\NN}$ with $u(x_{k_i})\rightharpoonup\xi$ and $v(y_{l_j})\rightharpoonup\xi'$ weakly, for some vectors $\xi$ and $\xi'$ of $\cH$.
  For each $i$, we have $\lim_j\langle u(x_{k_i}),v(y_{l_j})\rangle=\langle u(x_{k_i}),\xi'\rangle$, which tends to $\langle\xi,\xi'\rangle$;
  the other iterated limit is also $\langle\xi,\xi'\rangle$.
\end{proof}

%#
%### SUBSECTION: The swap operator, symmetrization, and Hong--Ou--Mandel interference
%#
\subsection{The swap operator, symmetrization, and Hong--Ou--Mandel interference}\label{sec:swap}
Let $K$ be a complex separable Hilbert space, which in this subsection is the Hilbert space of both systems.
The \emph{swap} $F\in B(K\otimes K)$ is the unitary operator with $F(\psi\otimes\chi)=\chi\otimes\psi$ for all $\psi,\chi\in K$.
Since $F=F^*$ and $F^2=I$, the operators
\[
  P_+=\tfrac12(I+F),\qquad P_-=\tfrac12(I-F)
\]
are the projections onto the symmetric and the antisymmetric subspaces of $K\otimes K$, the eigenspaces of $F$ for the eigenvalues $1$ and $-1$, respectively.

\begin{lemma}[Swap trick]\label{lem:swaptrick}
  For all $\tau_1,\tau_2\in\cT(K)$, we have $\Tr\bigl((\tau_1\otimes\tau_2)F\bigr)=\Tr(\tau_1\tau_2)$.
\end{lemma}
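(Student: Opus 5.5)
We will prove the swap trick first on elementary tensors of rank-one operators and then extend by bilinearity and continuity. Both sides of the identity are bilinear in $(\tau_1,\tau_2)$. They are also jointly continuous for the trace norm. On the left, $|\Tr((\tau_1\otimes\tau_2)F)|\le\|\tau_1\otimes\tau_2\|_1\|F\|=\|\tau_1\|_1\|\tau_2\|_1$, since $F$ is unitary and the trace norm is multiplicative on tensor products. On the right, $|\Tr(\tau_1\tau_2)|\le\|\tau_1\|_1\|\tau_2\|_\infty\le\|\tau_1\|_1\|\tau_2\|_1$. Finite-rank operators are trace-norm dense in $\cT(K)$, and they are spanned by the rank-one operators $|\psi\rangle\langle\chi|$. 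It therefore suffices to check the identity for $\tau_1=|\psi_1\rangle\langle\chi_1|$ and $\tau_2=|\psi_2\rangle\langle\chi_2|$.

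For these operators, $\tau_1\otimes\tau_2=|\psi_1\otimes\psi_2\rangle\langle\chi_1\otimes\chi_2|$. We use $\Tr(|a\rangle\langle b|A)=\langle b|A|a\rangle$, with the convention that inner products are linear in the second slot. The left side becomes
\[
\langle\chi_1\otimes\chi_2,F(\psi_1\otimes\psi_2)\rangle=\langle\chi_1\otimes\chi_2,\psi_2\otimes\psi_1\rangle=\langle\chi_1,\psi_2\rangle\langle\chi_2,\psi_1\rangle .
\]
For the right side, $\tau_1\tau_2=|\psi_1\rangle\langle\chi_1,\psi_2\rangle\langle\chi_2|$, so
\[
\Tr(\tau_1\tau_2)=\langle\chi_1,\psi_2\rangle\langle\chi_2,\psi_1\rangle .
\]
The two sides agree.

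The only point that needs any care is the continuity estimate for the left side. We must know that $\tau_1\otimes\tau_2$ is trace class with $\|\tau_1\otimes\tau_2\|_1=\|\tau_1\|_1\|\tau_2\|_1$. This holds because $|\tau_1\otimes\tau_2|=|\tau_1|\otimes|\tau_2|$ and the trace is multiplicative on tensor products of positive trace-class operators. Alternatively, it is enough to note that $(\tau_1,\tau_2)\mapsto\Tr((\tau_1\otimes\tau_2)F)$ is the bilinear form $m_F$ of \cref{def:slice-and-bilinear}, which is bounded by $\|F\|=1$. Everything else is routine.
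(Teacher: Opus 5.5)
Your proof is correct and is the paper's argument: compute both sides on rank-one operators, where $(\tau_1\otimes\tau_2)F$ is again rank one, and extend by bilinearity and trace-norm continuity. The only difference is that you also justify the continuity step, via $\|\tau_1\otimes\tau_2\|_1=\|\tau_1\|_1\|\tau_2\|_1$ and the density of finite-rank operators, which the paper leaves implicit.
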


\begin{proof}
  For $\tau_1=|\psi\rangle\langle\psi'|$ and $\tau_2=|\chi\rangle\langle\chi'|$, with
  $\psi,\psi',\chi,\chi'\in K$, one computes $(\tau_1\otimes\tau_2)F=|\psi\otimes\chi\rangle\langle\chi'\otimes\psi'|$, whose trace is $\langle\chi',\psi\rangle\langle\psi',\chi\rangle=\Tr(\tau_1\tau_2)$.
  Extend by bilinearity and continuity.
\end{proof}

Let $S_2(K)$ be the Hilbert--Schmidt class of operators $A$ on $K$ with $\Tr(A^*A)<\infty$, with inner product $\langle A,B\rangle_{S_2}=\Tr(A^*B)$.
By \cref{lem:swaptrick}, for density operators $\rho$
and $\sigma$ on $K$, the expectation of the swap is
$\varphi_F(\rho,\sigma)=\Tr(\rho\sigma)=\langle\rho,\sigma\rangle_{S_2}$, and consequently
\[
  \varphi_{P_\pm}(\rho,\sigma)=\tfrac12\bigl(1\pm\Tr(\rho\sigma)\bigr).
\]
The operators $F$ and $P_\pm$ are therefore stable (see \cref{prop:hilbertfactor}).
On the other hand, the slice map $T_F(\rho)=\rho$ is the inclusion $\cT(K)\to B(K)$, which is not compact when $K$ is infinite-dimensional, since the pure states given by the vectors of an orthonormal sequence have images at mutual distance one;
the slice maps $T_{P_\pm}=\tfrac12(\Tr(\,\cdot\,)I\pm T_F)$ differ from $\pm\tfrac12T_F$ by an operator of rank one, and are therefore not compact either.
Hence, by \cref{prop:notmin}, when $K$ is infinite-dimensional, the operators $F$ and $P_\pm$ do not belong to $B(K)\otimes_{\min}B(K)$.

The overlap $\Tr(\rho\sigma)$ governs three tests:
\begin{itemize}
\item
  \emph{The swap test}~\cite{Buhrman-Cleve-Watrous-deWolf:2001}.
  An ancilla, an auxiliary qubit (a quantum two-level system), controls a swap of two registers, the quantum systems prepared in $\rho$ and $\sigma$.
  The test accepts with probability $\tfrac12(1+\Tr\rho\sigma)=\varphi_{P_+}(\rho,\sigma)$, whence the frequency of acceptance in repeated runs estimates the overlap $\Tr\rho\sigma$ of the two states.
\item
  \emph{Exchange symmetry.}
  For two independently prepared systems with the same Hilbert space, the number $\varphi_{P_\pm}(\rho,\sigma)$ is the probability that a projective test for exchange symmetry (respectively, antisymmetry) succeeds.
  For genuinely identical particles a product state
  $\rho\otimes\sigma$ is in general not admissible, since the joint state of two identical particles is supported on the symmetric subspace
  (for bosons) or on the antisymmetric subspace (for fermions), and by the formula for $\varphi_{P_\pm}$ above the state $\rho\otimes\sigma$ is so supported only for bosons and $\rho=\sigma$ pure.
  The interpretation of $\varphi_{P_\pm}(\rho,\sigma)$ as a probability therefore concerns two distinguishable particles, or the internal degrees of freedom of two identical ones, as in the next item.
\item
  \emph{Hong--Ou--Mandel interference}~\cite{Hong-Ou-Mandel:1987}.
  Two independently prepared single photons with internal states $\rho$ and $\sigma$ enter the two input ports of an ideal lossless balanced ($50{:}50$) beam splitter, whose coherent action mixes the spatial modes and preserves the internal modes.
  The internal states include the degrees of freedom relevant to distinguishability.
  With ideal output counting that does not resolve these degrees of freedom, the probability of one photon in each output port is $\tfrac12(1-\Tr\rho\sigma)=\varphi_{P_-}(\rho,\sigma)$, which vanishes for equal pure states:
  the two photons then leave by the same port (bunching).
\end{itemize}

%#
%### SUBSECTION: Equality and order of discrete spectra
%#
\subsection{Equality and order of discrete spectra}
Let $A$ and $B$ be self-adjoint operators on $H_1$ and $H_2$, possibly unbounded, representing quantities of Alice’s and of Bob’s system, respectively, with pure point spectra, that is, with orthonormal bases of eigenvectors.
Their spectral decompositions are $A=\sum_\lambda\lambda\Pi^A_\lambda$
and $B=\sum_\lambda\lambda\Pi^B_\lambda$, where $\lambda$ runs over the reals and $\Pi^A_\lambda$ and $\Pi^B_\lambda$ are the projections onto the eigenspaces of $A$ and of $B$ for $\lambda$ (zero unless $\lambda$ is an eigenvalue).
The events “equal values” and “Bob’s value is larger” are described by the projections
\[
  [A=B]=\sum_\lambda \Pi^A_\lambda\otimes \Pi^B_\lambda,\qquad
  [A<B]=\sum_{\lambda<\lambda'}\Pi^A_{\lambda}\otimes \Pi^B_{\lambda'} .
\]

\begin{proposition}[Equality is stable, order is not]\label{prop:comparison}
  Let $A$ and $B$ be self-adjoint operators on $H_1$ and $H_2$, possibly unbounded, with pure point spectra.
  \begin{enumerate}[label=\textup{(\arabic*)}]
  \item
    The projection $[A=B]$ is stable.
    If $A$ and $B$ have infinitely many common eigenvalues, then the projection $[A=B]$ does not belong to $B(H_1)\otimes_{\min}B(H_2)$.
  \item
    If $A$ or $B$ has finitely many eigenvalues, then the projection $[A<B]$ is stable.
  \item
    If the point spectra of $A$ and $B$ are both unbounded above, then the projection $[A<B]$ is unstable.
  \end{enumerate}
\end{proposition}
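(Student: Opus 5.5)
For part (1) I will exhibit a Hilbert factorization and invoke \cref{prop:hilbertfactor}. The eigenvalues form a countable set $\Lambda$, since $H_1$ and $H_2$ are separable. For $\rho\in\fD(H_1)$ and $\sigma\in\fD(H_2)$, the expectation is
\[
  \varphi_{[A=B]}(\rho,\sigma)=\sum_{\lambda}\Tr(\rho\Pi^A_\lambda)\Tr(\sigma\Pi^B_\lambda),
\]
the sum converging because the projections are orthogonal.

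Set $u(\rho)=(\Tr(\rho\Pi^A_\lambda))_{\lambda\in\Lambda}$ and $v(\sigma)=(\Tr(\sigma\Pi^B_\lambda))_{\lambda\in\Lambda}$ in $\ell^2(\Lambda)$. These are probability vectors, so each has $\ell^1$-norm one and hence $\ell^2$-norm at most one. The expectation is then $\langle u(\rho),v(\sigma)\rangle$ (the entries are real, so no conjugation issue arises), and stability follows from \cref{prop:hilbertfactor}.

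For non-membership in the minimal tensor product I will use \cref{prop:notmin} and show that the slice map $T_{[A=B]}$ is not compact. Take distinct common eigenvalues $\lambda_1,\lambda_2,\dots$ and unit eigenvectors $e_n$ of $A$ for $\lambda_n$. Then $T(|e_n\rangle\langle e_n|)=\Pi^B_{\lambda_n}$, and these are mutually orthogonal nonzero projections with $\|\Pi^B_{\lambda_n}-\Pi^B_{\lambda_m}\|=1$ for $n\ne m$. The image of the unit ball therefore contains a $1$-separated sequence, so it is not relatively compact.

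For part (2), suppose $A$ has finitely many eigenvalues $\lambda_1,\dots,\lambda_r$. Then
\[
  [A<B]=\sum_{j=1}^r\Pi^A_{\lambda_j}\otimes\Pi^B_{(\lambda_j,\infty)},
\]
where $\Pi^B_{(\lambda_j,\infty)}$ is the spectral projection of $B$ on $(\lambda_j,\infty)$, a bounded operator. This is a finite sum of products, so \cref{thm:stable} applies; the case where $B$ has finitely many eigenvalues is symmetric. I will also check that this rewriting agrees with the defining series in the strong operator sense, which is immediate on product eigenvectors.

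For part (3) I will build an infinite ladder and apply the criterion following \cref{cor:ultra-stable}. Since both spectra are unbounded above, I choose inductively eigenvalues $a_1<b_1<a_2<b_2<\cdots$ with $a_k$ an eigenvalue of $A$ and $b_k$ an eigenvalue of $B$: pick $a_k$ of $A$ above $b_{k-1}$, then $b_k$ of $B$ above $a_k$. Take unit eigenvectors $\rho_k$ for $a_k$ and $\sigma_l$ for $b_l$, viewed as pure states. Then $\varphi(\rho_k,\sigma_l)=1$ if $a_k<b_l$, which holds exactly when $k\le l$, and $0$ otherwise. This gives a ladder with $L_1=1$ and $L_2=0$, so the observable is unstable.

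The main obstacle is mild: justifying convergence of the sums, in particular that $\varphi_{[A=B]}$ equals the series (normality of $\rho\otimes\sigma$ applied to an orthogonal sum of projections), together with the alternating choice of eigenvalues in part (3).
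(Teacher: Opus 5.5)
Your proposal is correct and follows the paper's proof essentially step for step: the $\ell^2$ factorization via \cref{prop:hilbertfactor} and the non-compact slice map $\tau\mapsto\Pi^B_{\lambda}$ via \cref{prop:notmin} for~(1), the finite sum of products for~(2), and interleaved eigenvalues $a_1<b_1<a_2<\cdots$ for~(3). The only cosmetic difference is in~(3): you invoke the ladder criterion stated after \cref{cor:ultra-stable}, whereas the paper simply observes that the two iterated limits are $0$ and $1$.
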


\begin{proof}
  (1) We have $\varphi_{[A=B]}(\rho,\sigma)=\sum_\lambda\Tr(\rho \Pi^A_\lambda)\Tr(\sigma \Pi^B_\lambda) =\langle u(\rho),v(\sigma)\rangle_{\ell^2}$, where
  $u(\rho)=(\Tr\rho\Pi^A_\lambda)_\lambda$ and $v(\sigma)=(\Tr\sigma\Pi^B_\lambda)_\lambda$ are vectors of $\ell^2$-norm at most one, and the projection is stable by \cref{prop:hilbertfactor}.
  For each common eigenvalue $\lambda$, pick a unit vector $\psi_\lambda$ in the range of $\Pi^A_\lambda$.
  The slice map $T_{[A=B]}$ sends $\psi_\lambda$ to $\Pi^B_\lambda$, and these are mutually orthogonal nonzero projections, so the map is not compact;
  the conclusion now follows from \cref{prop:notmin}.

  (2) If $A$ has finitely many eigenvalues, then the projection $[A<B]=\sum_{\lambda} \Pi^A_{\lambda}\otimes\bigl(\sum_{\lambda'>\lambda}\Pi^B_{\lambda'}\bigr)$ is a finite sum of products, stable by \cref{thm:stable}.
  If $B$ has finitely many eigenvalues, the same argument applies to $[A<B]=\sum_{\lambda'}\bigl(\sum_{\lambda<\lambda'}\Pi^A_{\lambda}\bigr)\otimes\Pi^B_{\lambda'}$.

  (3) Since the point spectra of $A$ and $B$ are both unbounded above, we can choose real numbers
  $\lambda_1<\lambda'_1<\lambda_2<\lambda'_2<\cdots$ such that, for every $k\in\NN$, the operators $A$ and $B$ have unit eigenvectors $\psi_k$ and $\chi_k$ with eigenvalues $\lambda_k$ and $\lambda'_k$, respectively.
  For $k,l\in\NN$, the value $\varphi_{[A<B]}(\psi_k,\chi_l)$ is then $1$ if $k\le l$ and $0$ if $k>l$, so the iterated limit taken first in $k$ is $0$, and the one taken first in $l$ is $1$.
\end{proof}

%#
%### SUBSECTION: The order projection
%#
\subsection{The order projection}
We return to the order projection of the Introduction, the sharpest example of instability:
although it is diagonal in a product basis, and hence classical, its two iterated limits are as far apart as the expectations of a projection allow.

\begin{theorem}[Instability of the order projection]\label{thm:unstable}
  Let $(|k\rangle)_{k\in\NN}$ and $(|l\rangle)_{l\in\NN}$ be orthonormal sequences in $H_1$ and $H_2$, let $P$ be the projection onto the closed span of $\{|k\rangle\otimes|l\rangle:k<l\}$, and let $\rho_k=|k\rangle\langle k|$ and $\sigma_l=|l\rangle\langle l|$.
  Then
  \[
    \lim_{l}\lim_{k}\varphi_P(\rho_k,\sigma_l)=0\neq1=\lim_{k}\lim_{l}\varphi_P(\rho_k,\sigma_l),
  \]
  and the distance of $P$ to $B(H_1)\otimes_{\min}B(H_2)$ is at least $\tfrac12$.
  In particular, the projection $P$ is unstable.
\end{theorem}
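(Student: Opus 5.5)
The proof has three parts: compute the expectations on the diagonal basis vectors, take the two iterated limits, and then use \cref{thm:stable}(a) for the distance bound and for instability.

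\emph{Step 1: the values.} Since $\rho_k\otimes\sigma_l$ is the pure state of the unit vector $|k\rangle\otimes|l\rangle$, we have
\[
  \varphi_P(\rho_k,\sigma_l)=\langle k\otimes l|P|k\otimes l\rangle .
\]
The vectors $|k\rangle\otimes|l\rangle$, for $k,l\in\NN$, form an orthonormal family in $H_1\otimes H_2$, because both sequences are orthonormal. By definition $P$ projects onto the closed span of the subfamily with $k<l$. So if $k<l$, the vector $|k\rangle\otimes|l\rangle$ lies in the range of $P$ and the value is $1$. If $k\ge l$, the vector is orthogonal to every generator of the range, and the value is $0$.

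\emph{Step 2: the iterated limits.} Fix $l$. For all $k\ge l$ the value is $0$, so $\lim_k\varphi_P(\rho_k,\sigma_l)=0$, and hence $\lim_l\lim_k\varphi_P(\rho_k,\sigma_l)=0$. Now fix $k$. For all $l>k$ the value is $1$, so $\lim_l\varphi_P(\rho_k,\sigma_l)=1$, and hence $\lim_k\lim_l\varphi_P(\rho_k,\sigma_l)=1$. Both iterated limits exist and differ, so $P$ is unstable by \cref{def:stable}.

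\emph{Step 3: the distance bound.} Apply \cref{thm:stable}(a) to these two sequences. It gives
\[
  1\le 2\operatorname{dist}\bigl(P,B(H_1)\odot B(H_2)\bigr).
\]
The minimal tensor product is the norm closure of $B(H_1)\odot B(H_2)$, so the distance from $P$ to the closure equals the distance to $B(H_1)\odot B(H_2)$. Hence $\operatorname{dist}\bigl(P,B(H_1)\otimes_{\min}B(H_2)\bigr)\ge\tfrac12$.

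None of these steps is a real obstacle. The only point needing care is the orthogonality in Step 1: that $|k\rangle\otimes|l\rangle$ with $k\ge l$ is orthogonal to the range of $P$. This holds because inner products of elementary tensors factor as products of inner products, so distinct pairs $(k,l)$ give orthogonal vectors.
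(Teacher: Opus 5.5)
Your proof is correct and matches the paper's argument: the paper also computes the staircase of values ($1$ for $k<l$, $0$ otherwise), reads off the column limits $0$ and row limits $1$, and obtains the distance bound from \cref{thm:stable}. Your remark that the distance to $B(H_1)\odot B(H_2)$ equals the distance to its norm closure only makes explicit a step the paper leaves implicit.
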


\begin{proof}
  For all $k,l\in\NN$, the expectation $\varphi_P(\rho_k,\sigma_l)$ is $1$ if $k<l$ and $0$ otherwise, and the values form the staircase
  \[
    \begin{pmatrix}
             & |1\rangle    & |2\rangle    & |3\rangle    & \cdots \\
      |1\rangle    & 0      & 1      & 1      & \cdots \\
      |2\rangle    & 0      & 0      & 1      & \cdots \\
      |3\rangle    & 0      & 0      & 0      & \cdots \\
      \vdots & \vdots & \vdots & \vdots & \ddots
    \end{pmatrix}.
  \]
  Each column tends to $0$ and each row to $1$.
  The bound on the distance follows from \cref{thm:stable}.
\end{proof}

The examples of this section, together with the sector comparison of \cref{exa:sector}, a stable observable that is not uniformly stable, are collected in a table at the end of \cref{sec:ultra}, after uniform stability has been introduced.

%#
%## SECTION: Topology: idealized responses are observables
%#
\section{Topology: idealized responses are observables}\label{sec:char}

\noindent We now characterize stability through the slice map, which assigns to each preparation of Alice the effective observable it leaves on Bob’s system.
We show that weak compactness of this map is equivalent both to reflexive factorization of the expectation and to agreement of its two Arens extensions.
The same condition says that every idealized response of Alice, with Bob’s system idealized first, is represented on all of Bob’s probes, normal and idealized alike, by a bounded operator on his space.
We also show that pure preparations and pure probes suffice to test stability.

%#
%### SUBSECTION: Topology: weak compactness
%#
\subsection{Topology: weak compactness}\label{sec:topology}
The proofs in this section and in \cref{sec:simulation} rest on five classical theorems about a Banach space $X$, which we recall in plain terms.
A bounded operator is \emph{weakly compact} if it maps the unit ball to a relatively weakly compact set.
\begin{itemize}
\item
  \emph{Eberlein--Šmulian theorem}~\cite{Eberlein:1947,Smulian:1940}.
  A subset of $X$ is relatively weakly compact if and only if every sequence in it has a weak cluster point in $X$.
\item
  \emph{Gantmacher’s theorem}~\cite{Gantmacher:1940}.
  An operator $T\colon X\to Y$ between Banach spaces is weakly compact if and only if its bitranspose $T''$, the transpose of its transpose, maps the bidual $X^{**}=(X^*)^*$ into $Y$, regarded as a subspace of $Y^{**}$.
\item
  \emph{Mazur’s theorem}~\cite{Mazur:1933}.
  The weak closure and the norm closure of a convex subset of $X$ coincide.
\item
  \emph{Krein’s theorem}~\cite{Krein-Smulian:1940}.
  The closed convex hull of a relatively weakly compact subset of $X$ is weakly compact.
\item
  \emph{Davis--Figiel--Johnson--Pełczyński theorem}~\cite{Davis-Figiel-Johnson-Pelcynski:1974}.
  Every weakly compact operator factors through a reflexive Banach space.
\end{itemize}

\begin{keytheorem}[Characterization of stability]\label{thm:weakcompact}
  Let $C\in B(H_1\otimes H_2)$.
  Then the following are equivalent:
  \begin{enumerate}[label=\textup{(\arabic*)}]
  \item
    The operator $C$ is stable.
  \item
    The bilinear form $m_C$ of \cref{def:slice-and-bilinear} is stable on the product of the unit balls of $\cT(H_1)$ and $\cT(H_2)$.
  \item
    \textup{(Topology.)} The slice map $T_C\colon\cT(H_1)\to B(H_2)$ of \cref{def:slice-and-bilinear} is weakly compact.
  \item
    \textup{(Geometry.)} There are a reflexive Banach space $E$ and bounded operators $a\colon\cT(H_1)\to E$ and $J\colon E\to B(H_2)$ with $T_C=J\circ a$.
  \item
    \textup{(Algebra.)} The two products of \cref{def:products}, one for each order of idealization, agree at $C$:
    for all states $\omega_1$ of $B(H_1)$ and $\omega_2$ of $B(H_2)$, we have $(\omega_1\oA\omega_2)(C)=(\omega_1\oB\omega_2)(C)$.
  \item[\textup{(5$'$)}]
    The equality in \textup{(5)} holds for all ultralimit states
    $\omega_1=\ulimit{k}{\cU}\rho_k$ and $\omega_2=\ulimit{l}{\cV}\sigma_l$ of sequences $(\rho_k)_{k\in\NN}$ and $(\sigma_l)_{l\in\NN}$ of normal states of $B(H_1)$ and of $B(H_2)$ along nonprincipal ultrafilters $\cU$ and $\cV$ on $\NN$.
  \item
    \textup{(Separate continuity.)} There is a function
    $\hat\varphi\colon\cS(B(H_1))\times\cS(B(H_2))\to\CC$, weak$^*$ continuous in each variable separately, that extends the expectation $\varphi_C$, in the sense that $\hat\varphi(\rho,\sigma)=\varphi_C(\rho,\sigma)$ for all normal states $\rho$ and $\sigma$.
  \end{enumerate}
\end{keytheorem}

For an observable $C$, the extension in~(6) takes values in $[-\|C\|,\|C\|]$;
the complex codomain allows the theorem to include arbitrary bounded operators.
The third step of the proof shows, moreover, that condition~(5) holds exactly when the bitranspose $T_C''$ maps every state $\omega_1$ of $B(H_1)$ into $B(H_2)$, namely to the effective operator $Z_{\omega_1}$ that $\omega_1$ leaves on Bob’s side (see \cref{def:products}).

We prove \cref{thm:weakcompact} in three steps, one for each language:
in this subsection we show that~(1) and~(2) are equivalent and imply~(3);
in \cref{sec:geometry} we show that~(3) and~(4) are equivalent;
and in \cref{sec:algebra} we close the circle through~(5), (5$'$) and~(6).

\begin{proof}[Proof of the first step]
  The equivalence of (1) and (2) is \cref{lem:preservation}(d).

  For the implication from (1) to (3), the idea is that the effective operator that an ultralimit of preparations leaves on Bob’s side is a weak cluster point of their effective operators, and that any failure of this produces a ladder.
  Let $(\rho_k)_{k\in\NN}$ be a sequence in $\fD(H_1)$, let $\cU$ be a nonprincipal ultrafilter on $\NN$, and let $\omega_1=\ulimit{k}{\cU}\rho_k$.
  We claim that $\ulimit{k}{\cU}\omega_2(T_C\rho_k)=\omega_2(Z_{\omega_1})$ for every state $\omega_2$ of $B(H_2)$.
  Suppose instead that, for some state $\omega_2$ of $B(H_2)$, the left side is $L_1$ and the right side is $L_2\neq L_1$, and let $\varepsilon=|L_1-L_2|/5$.
  By \cref{lem:products}, we have $\ulimit{k}{\cU}\Tr(\sigma\,T_C\rho_k)=\Tr(\sigma Z_{\omega_1})$ for every normal state $\sigma$.
  Choose recursively indices $k_1<k_2<\cdots$ and normal states $\sigma_1,\sigma_2,\dots$ as follows.
  The index $k_m$ is chosen in the set
  \[
    \bigl\{k:\ |\omega_2(T_C\rho_k)-L_1|\le\varepsilon\ \text{and}\
    |\Tr(\sigma_iT_C\rho_k)-\Tr(\sigma_iZ_{\omega_1})|\le\varepsilon\ \text{for all }i<m\bigr\},
  \]
  which belongs to $\cU$.
  The state $\sigma_m$ is a normal state with $|\Tr(\sigma_mT_C\rho_{k_i})-\omega_2(T_C\rho_{k_i})|\le\varepsilon$ for all $i\le m$ and $|\Tr(\sigma_mZ_{\omega_1})-\omega_2(Z_{\omega_1})|\le\varepsilon$;
  such a state exists by \cref{lem:density}.
  The value $\varphi_C(\rho_{k_i},\sigma_m)$ is then within $2\varepsilon$ of $L_1$ for $i\le m$ and within $2\varepsilon$ of $L_2$ for $i>m$.
  Because $4\varepsilon<|L_1-L_2|$, the paragraph after \cref{cor:ultra-stable} shows that $C$ is unstable, a contradiction.
  Since the claim extends from states to all of $B(H_2)^*$ by \cref{lem:density}, the operator $Z_{\omega_1}$ is a weak cluster point of $(T_C\rho_k)$.
  The Eberlein--Šmulian theorem now shows that the set $T_C(\fD(H_1))$ is relatively weakly compact.
  Finally, the decomposition in the proof of \cref{lem:preservation}(d)
  places the image of the unit ball under $T_C$ in a sum of four sets of the form $\{c\,A:|c|\le1,\ A\in T_C(\fD(H_1))\}$, each of which is relatively weakly compact, so the slice map $T_C$ is weakly compact.
\end{proof}

%#
%### SUBSECTION: Geometry: factorization through a reflexive space
%#
\subsection{Geometry: factorization through a reflexive space}\label{sec:geometry}
Condition~(4) of \cref{thm:weakcompact} says that the expectation factors, by bounded affine maps, through the pairing of a reflexive space with its dual:
with $u(\rho)=a(\rho)$ and $v(\sigma)=J'(\sigma)$, the diagram
\begin{equation}\label{eq:thm-reflexive}
\begin{tikzcd}[row sep=large, column sep=huge]
  \fD(H_1)\times\fD(H_2) \arrow[r, "{(u,v)}"] \arrow[dr, "\varphi_C"'] & E\times E^* \arrow[d, "{\langle\cdot,\cdot\rangle}"] \\
  & \CC
\end{tikzcd}
\end{equation}
commutes.
Conversely, let $X$ and $Y$ be arbitrary sets and $E$ a reflexive Banach space.
For bounded maps $u\colon X\to E$ and $v\colon Y\to E^*$, the function $(x,y)\mapsto\langle u(x),v(y)\rangle$ is stable.
Indeed, given sequences along which both iterated limits exist, pass to subsequences on which $u(x_k)$ and $v(y_l)$ converge weakly to $\xi\in E$ and $\eta\in E^*$.
These subsequences exist because both $E$ and $E^*$ are reflexive.
The two iterated limits then equal $\langle\xi,\eta\rangle$, as in the proof of \cref{prop:hilbertfactor}.
Applied to the preparation spaces, this argument shows that condition~(A3) of Theorem~A implies stability even without the assumption that $u$ and $v$ are affine;
together with condition~(4) of \cref{thm:weakcompact}, it proves the equivalence of~(A1) and~(A3).

\begin{proof}[Proof of the second step]
  We show that \textup{(3)} and \textup{(4)} are equivalent.
  If $T_C$ is weakly compact, it factors through a reflexive space by the Davis--Figiel--Johnson--Pełczyński theorem, and the expectation becomes
  $\varphi_C(\rho,\sigma)=\Tr(\sigma\,J(a\rho))=\langle a(\rho),J'(\sigma)\rangle$, which is the commutativity of~\eqref{eq:thm-reflexive}.
  Conversely, the image under $a$ of the unit ball is bounded, hence relatively weakly compact in the reflexive space $E$, and $J$ is weak-to-weak continuous, so the slice map $T_C=J\circ a$ is weakly compact.
\end{proof}

%#
%### SUBSECTION: Algebra: the two orders of idealization agree
%#
\subsection{Algebra: the two orders of idealization agree}\label{sec:algebra}
The algebraic form of stability is Arens regularity:
the two products of \cref{def:products} agree.
The proof of this equivalence closes the circle of implications in \cref{thm:weakcompact}, and it also shows that the agreement of the two products is equivalent to the separate continuity in~(6), which returns to topology.

\begin{proof}[Proof of the third step]
  We show that (3) and (5) are equivalent, that (5) implies (5$'$), which implies (1), and that (5) and (6) are equivalent;
  together with the first two steps, this proves the theorem.
  We begin with (3) and (5).
  For a state $\omega_1$ and a normal state $\sigma$, we have $\langle T_C''\omega_1,\sigma\rangle=\omega_1(T_C'\sigma)=\omega_1(S_C\sigma) =\Tr(\sigma Z_{\omega_1})$.
  Hence the functional $T_C''(\omega_1)$ belongs to $B(H_2)$ if and only if
  $T_C''(\omega_1)=Z_{\omega_1}$, and by \cref{lem:products}, this happens if and only if $(\omega_1\oB\omega_2)(C)=\omega_2(Z_{\omega_1})=(\omega_1\oA\omega_2)(C)$ for all states $\omega_2$.
  Since states span $B(H_1)^*$ (see \cref{lem:density}), condition~(5) says that $T_C''(B(H_1)^*)\subseteq B(H_2)$, which is~(3) by Gantmacher’s theorem.

  The implication from (5) to (5$'$) is trivial.
  For the implication from (5$'$) to (1), suppose that $C$ is unstable.
  There are sequences in $\fD(H_1)$ and $\fD(H_2)$ along which the ordinary iterated limits of
  $\varphi_C$ exist and differ, and by \cref{lem:products}, the ultralimit states along any nonprincipal ultrafilters $\cU$ and $\cV$ on $\NN$ reproduce these limits as the two products, which therefore differ, contrary to~(5$'$).

  Finally, we turn to (5) and (6).
  By \cref{lem:products}, the maps $\omega_2\mapsto(\omega_1\oA\omega_2)(C)=\omega_2(Z_{\omega_1})$
  and $\omega_1\mapsto(\omega_1\oB\omega_2)(C)=\omega_1(W_{\omega_2})$, evaluations at the operators that the two idealized preparations leave on Bob’s and on Alice’s side, respectively, are weak$^*$
  continuous, so under~(5) their common value is separately continuous.
  Conversely, let $\hat\varphi$ be as in~(6).
  For a normal state $\sigma$ and a net $(\rho_i)$ of normal states of $B(H_1)$, indexed by a directed set, with limit $\omega_1$, continuity in the first variable gives $\hat\varphi(\omega_1,\sigma)=\lim_i\varphi_C(\rho_i,\sigma)=(\omega_1\oA\sigma)(C)$.
  The functions $\hat\varphi(\omega_1,\cdot\,)$ and $(\omega_1\oA\cdot\,)(C)$ are weak$^*$ continuous and agree on the normal states.
  Since these states are dense, we have
  $\hat\varphi(\omega_1,\omega_2)=(\omega_1\oA\omega_2)(C)$ for all pairs of states.
  Symmetrically, the function $\hat\varphi$ also coincides with $(\omega_1,\omega_2)\mapsto(\omega_1\oB\omega_2)(C)$, and~(5) follows.
\end{proof}

%#
%### SUBSECTION: Pure probes suffice
%#
\subsection{Pure probes suffice}
Pure states, those given by unit vectors, are the extreme points of the set of density operators and the simplest states to describe.
In the next proposition we show that pure preparations and pure probes detect every instability.

\begin{proposition}[Pure probes suffice]\label{prop:pure}
  Let $C\in B(H_1\otimes H_2)$ be given.
  If the restriction of $\varphi_C$ to the pairs of pure states, one of Alice’s system and one of Bob’s, is stable, then $C$ is stable.
\end{proposition}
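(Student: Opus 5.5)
We argue by contraposition, reducing to the first step of the proof of \cref{thm:weakcompact}: if $C$ is unstable, we produce an infinite ladder with a positive gap among pure states. The key observation is that the bilinear form $m_C$ is jointly continuous for the trace norms, and that every density operator is a norm-convergent convex combination of pure states. Mixing cannot create instability, but it does not obviously destroy it either, so we work at the level of the Arens products and weak compactness.

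\emph{Reduction to a weak compactness statement.} Let $\mathcal{P}_1\subseteq\fD(H_1)$ and $\mathcal{P}_2\subseteq\fD(H_2)$ be the pure states. We rerun the argument of the first step of \cref{thm:weakcompact}, with $\fD(H_1)$ replaced by $\mathcal{P}_1$ and normal probes replaced by pure probes. Take a sequence $(\rho_k)$ in $\mathcal{P}_1$, an ultrafilter $\cU$, $\omega_1=\ulimit{k}{\cU}\rho_k$, and a state $\omega_2$. Suppose $\ulimit{k}{\cU}\omega_2(T_C\rho_k)\ne\omega_2(Z_{\omega_1})$. The recursive construction needs probes $\sigma_m$ approximating $\omega_2$ at finitely many operators $T_C\rho_{k_i}$ and $Z_{\omega_1}$. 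By \cref{lem:density} we get normal such $\sigma_m$, but we need pure ones.

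\emph{Main obstacle: pure states need not be weak$^*$ dense.} This is where the proof needs care, and we handle it by linearity. Approximate $\omega_2$ by a normal state $\sigma$, then write $\sigma=\sum_j p_j\chi_j$ spectrally and truncate to a finite convex combination of pure states within small trace-norm error. Now the $m$-th probe is a finite mixture $\sum_j p^{(m)}_j\chi^{(m)}_j$. Relabel all the pure states appearing, and look at the resulting array $a_{k,(m,j)}=\varphi_C(\rho_k,\chi^{(m)}_j)$. The ladder inequalities hold for the averages over $j$. From averaged ladders, we extract a pure ladder by a pigeonhole and Ramsey-type argument. Fix $\varepsilon$ small. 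For each $m$, the convex combination over $j$ is within $2\varepsilon$ of $L_1$ on rows $i\le m$ and within $2\varepsilon$ of $L_2$ on rows $i>m$. Projecting onto the direction $L_1-L_2$, we find for each pair $(i,m)$ with $i\le m<i'$ some $j$ such that the real part of $\overline{(L_1-L_2)}\,\bigl(a_{k_i,(m,j)}-a_{k_{i'},(m,j)}\bigr)$ is at least $c|L_1-L_2|^2$, with $c>0$.

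Simpler alternative, which we would try first: avoid ladders and use stability of the bilinear extension directly. Restriction of $\varphi_C$ to $\mathcal{P}_1\times\mathcal{P}_2$ being stable, \cref{lem:preservation}(a) makes the function on finite formal mixtures $\bigl(\sum p_j\psi_j,\sum q_l\chi_l\bigr)\mapsto\sum p_jq_l\varphi_C(\psi_j,\chi_l)$ stable? Not directly: the number of terms is unbounded. Instead, invoke the Pták/Grothendieck fact that stability of $f$ on $X\times Y$ implies stability on $\operatorname{conv}X\times\operatorname{conv}Y$ (the affine extension). This is exactly the content of Krein's theorem: by the first step applied to pure states, $T_C(\mathcal{P}_1)$ is relatively weakly compact in $\ell^\infty(\mathcal{P}_2)$, the restrictions to pure probes. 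By Krein, its closed convex hull is weakly compact. Finite-rank truncation and trace-norm density then give relative weak compactness of the responses of all of $\fD(H_1)$ on $\mathcal{P}_2$, and Grothendieck's double-limit criterion yields stability on $\fD(H_1)\times\mathcal{P}_2$. Repeating in the second variable gives stability on $\fD(H_1)\times\fD(H_2)$. The main point to verify is the use of Grothendieck's criterion on the set $\mathcal{P}_2$ equipped with a suitable compact topology. The Krein step handles the mixing, and the double limit property for convex hulls is precisely Pták's combinatorial lemma, which we would cite if the topological route is awkward.
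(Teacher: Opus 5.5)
Your second route, the one you say you would try first, is correct, and on Bob's side it differs from the paper's proof. You can drop the ladder--Ramsey attempt that precedes it. As written it does not close: the index $j$ you select depends on the pair of rows, so you never get a single pure probe per column, and repairing that amounts to reproving Krein's theorem.

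One correction to the second route. The relative weak compactness of the pure responses in $\ell^\infty(\mathcal{P}_2)$ does not come from the first step of \cref{thm:weakcompact} as stated. That step needs probes that are weak$^*$ dense in $\cS(B(H_2))$, which pure probes are not. It comes instead from Grothendieck's criterion in $\ell^\infty(\mathcal{P}_2)\cong\cC(\beta\mathcal{P}_2)$, where $\mathcal{P}_2$ is dense in its Stone--\v{C}ech compactification. This is the ``suitable compact topology'' you were looking for, and it is the one the paper uses.

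Where the two proofs part ways:
\begin{itemize}
\item The paper first reduces to self-adjoint $C$. It then uses that restriction to vector states is an isometry of the self-adjoint part of $B(H_2)$ into $\ell^\infty(\mathcal{P}_2)$, since $\|A\|=\sup_{\|\psi\|=1}|\langle\psi,A\psi\rangle|$ for self-adjoint $A$. This transfers relative weak compactness to the operators $T_C\psi$ in $B(H_2)$. The paper applies Krein there and concludes by \cref{thm:weakcompact}, which handles all probes, idealized ones included.
\item You apply Krein in $\ell^\infty(\mathcal{P}_2)$ instead, and then rerun the Grothendieck--Krein argument with the variables exchanged, in $\ell^\infty(\fD(H_1))$. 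This is valid because $\varphi_C$ is affine in each variable and every density operator is a trace-norm convergent mixture of pure states. Hence each response lies in the norm-closed convex hull of the pure responses, in either variable.
\end{itemize}

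What each route buys: yours stays with bounded biaffine functions and needs neither the self-adjoint reduction, nor the numerical-radius isometry, nor \cref{thm:weakcompact}. It proves the general fact that stability passes to countable convex combinations in both variables. The paper's route gives directly the stronger operator-level statement that the effective observables of pure preparations form a relatively weakly compact subset of $B(H_2)$, which ties the proposition to Theorem~A.
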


\begin{proof}
  The idea is to represent effective operators as continuous functions on the closure of the pure states of $B(H_2)$, where Grothendieck’s criterion needs to test only a dense set of points.
  For density operators we have $\varphi_{C^*}=\overline{\varphi_C}$, so the hermitian parts $\tfrac12(C+C^*)$ and $\tfrac1{2i}(C-C^*)$ of $C$ inherit the hypothesis, and we may assume that $C$ is self-adjoint;
  then the operator $T_C(\rho)$ is self-adjoint for $\rho\ge0$.
  Let $\Omega\subseteq\cS(B(H_2))$ be the weak$^*$ closure of the pure states, a compact space.
  For a self-adjoint operator $A$, the function $\hat{A}(\omega)=\omega(A)$
  on $\Omega$ satisfies $\sup_\Omega|\hat{A}|=\|A\|$, so the map $A\mapsto\hat{A}$ is a linear isometry of the self-adjoint part of $B(H_2)$ onto a closed subspace of the space $\cC(\Omega)$ of continuous functions on $\Omega$, with the supremum norm.
  Since the pure states are dense in $\Omega$, restriction to the set $\mathsf{P}_2$ of pure states of Bob’s system is a linear isometry of $\cC(\Omega)$ onto a closed subspace of the space $\ell^\infty(\mathsf{P}_2)$ of bounded functions on $\mathsf{P}_2$.
  By Grothendieck’s theorem~\cite{Grothendieck:1952}
  (see also~\cite{Ptak:1963b}), a bounded subset of $\ell^\infty(\mathsf{P}_2)$ is relatively weakly compact exactly when evaluation, as a function on the product of the subset with $\mathsf{P}_2$, is stable in the sense of \cref{def:stable}.
  Applied to the double limit property of the hypothesis, it shows that the restrictions of the functions
  $\omega\mapsto\omega(T_C\psi)$, for pure states $\psi$ of Alice’s system, form a relatively weakly compact subset of $\ell^\infty(\mathsf{P}_2)$, and hence the functions themselves form a relatively weakly compact subset of $\cC(\Omega)$.
  Relative weak compactness passes to the closed subspace by the Hahn--Banach theorem, so the set of operators $T_C\psi$, for $\psi$ pure, is relatively weakly compact, and by Krein’s theorem its closed convex hull is weakly compact.
  This hull contains $T_C(\fD(H_1))$, because every density operator is a trace-norm convergent convex combination of pure states.
  As at the end of the first step of the proof of \cref{thm:weakcompact}, it follows that the slice map $T_C$ is weakly compact, whence the operator $C$ is stable by \cref{thm:weakcompact}.
\end{proof}

%#
%### SUBSECTION: Historical remarks
%#
\subsection{Historical remarks}\label{sec:hist-char}
For general bounded bilinear forms, the equivalence of~(2) and~(5) in \cref{thm:weakcompact} is Pym’s iterated limit criterion for Arens regularity~\cite{Pym:1965}, and the equivalence of~(3) and~(5) is due to Ülger~\cite{Ulger:1987};
the Arens products themselves were introduced by Arens~\cite{Arens:1951}.
The equivalence of~(1) and~(3) is an instance of Grothendieck’s theorem~\cite{Grothendieck:1952}, which equates the double limit condition with relative weak compactness;
the proof given here is a self-contained version adapted to states.
The factorization~(4) is the operator form of Iovino’s characterization of stable functions through reflexive spaces, recalled in the historical remarks of \cref{sec:stability}~\cite[Theorem~3.3]{Iovino:1999c};
its proof combines the Krivine--Maurey and Raynaud constructions~\cite{Krivine-Maurey:1981,Raynaud:1983a} with the Davis--Figiel--Johnson--Pełczyński factorization~\cite{Davis-Figiel-Johnson-Pelcynski:1974}.

%#
%## SECTION: Convexity: finite mixtures simulate idealized preparations
%#
\section{Convexity: finite mixtures simulate idealized preparations}\label{sec:simulation}

\noindent For a stable observable, Mazur’s theorem turns weak approximation of its effective observables into norm approximation by convex combinations.
We show that stability is exactly the possibility of reproducing every idealized response of Alice to arbitrary accuracy by a finite mixture of actual preparations,
with one mixture serving for all of Bob’s probes, normal and idealized alike, and with Bob’s system idealized first.
When the bilinear form $m_C$ of $C$ factors through a Hilbert space by bounded linear and conjugate-linear maps on trace-class operators, we also bound the number of terms in an equal-weight mixture.

In the next theorem, $T_C$ is the slice map of \cref{def:slice-and-bilinear}, which sends each density operator $\rho$ on $H_1$ to the effective operator $T_C(\rho)$ that it leaves on Bob’s side, and $\oA$ and $\oB$ are the products of \cref{def:products}, in which Alice’s and Bob’s system, respectively, is idealized first.

\begin{keytheorem}[Affine simulation]\label{thm:polyapprox}
  Let $C\in B(H_1\otimes H_2)$.
  Then the following are equivalent:
  \begin{enumerate}[label=\textup{(\arabic*)}]
  \item
    The operator $C$ is stable.
  \item
    \textup{(Bob idealized first.)} For every state $\omega_1$ of $B(H_1)$ and every $\varepsilon>0$, there is $\rho\in\fD(H_1)$ such that for all states $\omega_2$ of $B(H_2)$, we have
    \[
      \bigl|(\omega_1\oB\omega_2)(C)-\omega_2\bigl(T_C(\rho)\bigr)\bigr|\le\varepsilon .
    \]
  \item
    \textup{(Tail mixtures.)} For every sequence $(\rho_k)_{k\in\NN}$
    in $\fD(H_1)$, every nonprincipal ultrafilter $\cU$ on $\NN$, every $n\in\NN$ and every $\varepsilon>0$, there is a finite convex combination $\rho=\sum_{k\ge n}t_k\rho_k$ such that for all states $\omega_2$ of $B(H_2)$, we have
    \[
      \bigl|(\omega_1\oA\omega_2)(C)-\omega_2\bigl(T_C(\rho)\bigr)\bigr|\le\varepsilon,
    \]
    where $\omega_1=\ulimit{k}{\cU}\rho_k$ is the ultralimit state of the sequence along $\cU$.
  \end{enumerate}
\end{keytheorem}

The proof gives more:
when these conditions hold, the density operator $\rho$ in~(2) can be taken to be a finite mixture of elements of any given tail of any net in $\fD(H_1)$ that converges to $\omega_1$ in the weak$^*$ topology.

\begin{proof}
  We first show that (1) implies (2) and (3).
  The idea is that, under stability, the idealized response is an actual operator in the weak closure of a convex set of actual responses, and Mazur’s theorem converts weak approximation into norm approximation.
  By \cref{thm:weakcompact}, the bitranspose $T_C''$
  of the slice map sends $\omega_1$ to the operator $Z_{\omega_1}\in B(H_2)$, the effective operator that Alice’s idealized preparation leaves on Bob’s side (see \cref{sec:setting}), and both products equal $\omega_2(Z_{\omega_1})$.
  Let $\varepsilon>0$ be given.
  If $\omega_1=\ulimit{k}{\cU}\rho_k$ for a sequence $(\rho_k)_{k\in\NN}$ in $\fD(H_1)$ and a nonprincipal ultrafilter $\cU$ on $\NN$, then, for every $n\in\NN$, the state $\omega_1$ lies in the weak$^*$ closure of the convex hull of $\{\rho_k:k\ge n\}$.
  The bitranspose $T_C''$ is weak$^*$-to-weak$^*$ continuous, and on $B(H_2)$ the weak$^*$ topology of $B(H_2)^{**}$ is the weak topology.
  Consequently the operator $Z_{\omega_1}$ lies in the weak closure of the convex set $T_C(\operatorname{conv}\{\rho_k:k\ge n\})$, and therefore, by Mazur’s theorem, in its norm closure:
  there is $\rho$ in this convex hull with $\|Z_{\omega_1}-T_C(\rho)\|\le\varepsilon$, and then $|\omega_2(Z_{\omega_1}-T_C\rho)|\le\varepsilon$ for every state $\omega_2$.
  The same argument applies to the tails of every net in $\fD(H_1)$ that converges to $\omega_1$ in the weak$^*$ topology;
  since every state $\omega_1$ is the limit of such a net (see \cref{lem:density}), we obtain~(2) and the assertion that follows the theorem.

  \emph{Proof that (2) implies (1).}
  By \cref{lem:products}, the functional $G=T_C''(\omega_1)-T_C(\rho)$ on $B(H_2)^*$ satisfies $|G(\omega_2)|\le\varepsilon$ for all states $\omega_2$, and \cref{lem:density} bounds its norm by $4\varepsilon$.
  Since $\varepsilon$ is arbitrary and $B(H_2)$ is norm closed in its bidual, the functional $T_C''(\omega_1)$ lies in $B(H_2)$, and this holds for every state $\omega_1$.
  By the remark after \cref{thm:weakcompact}, condition~(5) of that theorem holds, and the operator $C$ is stable.

  \emph{Proof that (3) implies (1).}
  Let $(\rho_k)_{k\in\NN}$ in $\fD(H_1)$ and $(\sigma_l)_{l\in\NN}$ in $\fD(H_2)$ be sequences along which the ordinary iterated limits
  \[
    L_1=\lim_k\lim_l\varphi_C(\rho_k,\sigma_l),\qquad
    L_2=\lim_l\lim_k\varphi_C(\rho_k,\sigma_l)
  \]
  exist.
  Let $\cU$ be a nonprincipal ultrafilter on $\NN$, let $\omega_1=\ulimit{k}{\cU}\rho_k$ and $\omega_2=\ulimit{l}{\cU}\sigma_l$, and write $\alpha_k$ for the row limit $\lim_l\varphi_C(\rho_k,\sigma_l)=\omega_2(T_C\rho_k)$.
  By \cref{lem:products}, the product $(\omega_1\oA\omega_2)(C)$ equals $L_2$.
  Given $\varepsilon>0$, choose $n$ with $|\alpha_k-L_1|\le\varepsilon$ for all $k\ge n$, and choose $\rho=\sum_{k\ge n}t_k\rho_k$ as in~(3).
  Then we have
  \[
    |L_2-L_1|\le|L_2-\omega_2(T_C\rho)|+\Bigl|\sum_kt_k\alpha_k-L_1\Bigr|\le2\varepsilon .
  \]
  Since $\varepsilon>0$ is arbitrary, the two iterated limits agree, and the operator $C$ is stable.
\end{proof}

In \cref{exa:order-matters} below, we show that the order of idealization in~(2) cannot be reversed:
if Alice’s system is idealized first, approximation by actual states can succeed for an unstable observable.
The example also distinguishes approximation by an arbitrary actual preparation from approximation by mixtures of a prescribed tail of a fixed sequence.
For the order projection, by contrast, some idealized preparation has responses that no actual preparation reproduces to within any error less than $1$, the largest possible error for a yes/no test, even on normal probes (see \cref{exa:nomixture}).

\begin{example}[The order of idealization matters]\label{exa:order-matters}
  Let $(e_k)_{k\ge0}$ and $(f_l)_{l\ge1}$ be orthonormal bases of $H_1$ and $H_2$, let $Q_k$ be the projection onto the span of $f_1,\dots,f_k$, and consider the operator
  \[
    P'=|e_0\rangle\langle e_0|\otimes I+\sum_{k\ge1}|e_k\rangle\langle e_k|\otimes Q_k .
  \]
  It is a projection, and for $k\ge1$ we have $\varphi_{P'}(e_k,f_l)=1$ if and only if $l\le k$, so the projection $P'$ is unstable.
  On the other hand, for a density operator $\rho$ on $H_1$ with diagonal entries $\rho_{kk}=\langle e_k|\rho|e_k\rangle$, the operator $T_{P'}(\rho)$ is diagonal in the basis $(f_l)_{l\ge1}$, with $l$-th diagonal entry $\rho_{00}+\sum_{k\ge l}\rho_{kk}$.
  These entries form a nonincreasing sequence in $[0,1]$ whose first term is $1$, and every such sequence $(c_l)_{l\ge1}$ arises, from $\rho_{kk}=c_k-c_{k+1}$ and $\rho_{00}=\lim_lc_l$.
  The set of these diagonal operators is weak$^*$ closed, so for every state $\omega_1$, the operator $Z_{\omega_1}$, which lies in the weak$^*$ closure of $T_{P'}(\fD(H_1))$, equals $T_{P'}(\rho)$ for some actual $\rho$.
  Thus $(\omega_1\oA\omega_2)(P')=\omega_2(T_{P'}\rho)$ exactly, for all states $\omega_2$.
  For a nonprincipal ultrafilter $\cU$ on $\NN$ and $\omega_1=\ulimit{k}{\cU}e_k$, the simulating state is $e_0$;
  yet for $\omega_2=\ulimit{l}{\cU}f_l$ the two products differ:
  $(\omega_1\oB\omega_2)(P')=0$, whereas $(\omega_1\oA\omega_2)(P')=\omega_2(T_{P'}e_0)=1$.
\end{example}

\begin{example}[Maximal failure for the order projection]\label{exa:nomixture}
  Let $(e_k)_{k\in\NN}$ and $(f_l)_{l\in\NN}$ be an orthonormal basis of
  $H_1$ and an orthonormal sequence in $H_2$, respectively, let
  $P$ be the order projection, the projection onto the closed span of the vectors $e_k\otimes f_l$ with $k<l$, and let $\omega_1=\ulimit{k}{\cU}e_k$ for a nonprincipal ultrafilter $\cU$ on $\NN$.
  The effective observables $T_P(e_k)=\sum_{l>k}|f_l\rangle\langle f_l|$ decrease to $0$ strongly, although each of them has norm $1$;
  hence the operator $Z_{\omega_1}$ vanishes, and $(\omega_1\oA\sigma)(P)=0$ for every normal probe $\sigma$.
  For every $\rho\in\fD(H_1)$, however, with diagonal entries $\rho_{kk}=\langle e_k|\rho|e_k\rangle$, we have $T_P(\rho)=\sum_k\rho_{kk}\sum_{l>k}|f_l\rangle\langle f_l|$, and $\Tr(f_N\,T_P\rho)=\sum_{k<N}\rho_{kk}$ tends to $1$ as $N\to\infty$.
  Thus no actual preparation reproduces the responses of $\omega_1$ to within any error less than $1$, even on normal probes.
\end{example}

\begin{remark}[Why degree one]\label{rem:degree-one}
  In real-valued model theory, every type of a stable formula over a model, that is, every consistent assignment of a value of the formula to each parameter from the model, has a definition:
  a function on the parameters that gives these values and is a uniform limit of polynomials, or of lattice expressions, in instances of the formula~\cite{BenYaacov-Usvyatsov:2010}.
  The instances are the functions obtained from the formula by fixing its first variable at elements of the model;
  for the formula $\varphi(x,y)$ interpreted by the expectation $\varphi_C$ of an observable $C\in B(H_1\otimes H_2)$, they are the actual responses $\varphi_C(\rho,\,\cdot\,)$, for $\rho\in\fD(H_1)$.
  Since $\varphi_C$ is affine in the state, a convex combination of instances is again an instance, namely the response of the corresponding mixture (see the Introduction).
  Thus in \cref{thm:polyapprox} the approximating “formula” is an instance $\varphi_C(\rho,\,\cdot\,)$ whose parameter $\rho$ is a physical preparation, and polynomials of degree one suffice.
\end{remark}

%#
%### SUBSECTION: A dimension-free Monte Carlo bound
%#
\subsection{A dimension-free Monte Carlo bound}
Mazur’s theorem says nothing about the size of the simulating mixture in \cref{thm:polyapprox}.
In \cref{thm:montecarlo} below, we show that when the expectation of an observable factors through a Hilbert space, a sampling argument does:
the mixture can be taken to be an empirical average of independent draws, and the law of large numbers in the Hilbert space bounds the error.

For a map $a$ from $\cT(H_1)$ into a Hilbert space, we write $R_a=\sup_{\rho\in\fD(H_1)}\|a(\rho)\|$ for the radius of its image of the density operators, and for a bounded map $b$ on $\cT(H_2)$ we write $\|b\|=\sup_{\|\tau'\|_1\le1}\|b(\tau')\|$.

\begin{keytheorem}[Monte Carlo simulation]\label{thm:montecarlo}
  Let $C\in B(H_1\otimes H_2)$, let $\cH$ be a Hilbert space, and let
  $a\colon\cT(H_1)\to\cH$ and $b\colon\cT(H_2)\to\cH$
  be bounded maps, the first linear and the second conjugate-linear, such that $m_C(\tau,\tau')=\langle b(\tau'),a(\tau)\rangle$ for all $\tau\in\cT(H_1)$ and $\tau'\in\cT(H_2)$.
  Let $(\rho_k)_{k\in\NN}$ be a sequence in $\fD(H_1)$, let $\cU$ be a nonprincipal ultrafilter on $\NN$, and let $\omega_1=\ulimit{k}{\cU}\rho_k$.
  Then for every $n\in\NN$, every $\varepsilon>0$
  and every positive integer $M\ge(2R_a\|b\|/\varepsilon)^2$, there are indices $k_1,\dots,k_M\ge n$, not necessarily distinct, such that the equal-weight mixture $\bar\rho=\frac1M\sum_{j=1}^M\rho_{k_j}$ satisfies
  \[
    \bigl|(\omega_1\oA\omega_2)(C)-\omega_2\bigl(T_C\bar\rho\bigr)\bigr|\le\varepsilon
    \qquad\text{for all states }\omega_2\text{ of }B(H_2).
  \]
\end{keytheorem}

\begin{proof}
  If $\|b\|=0$, then $b=0$, so $m_C=0$ and hence $C=0$;
  any choice of $M$ indices from the prescribed tail gives the assertion.
  We may therefore assume that $\|b\|>0$.
  For $\xi\in\cH$, let $B_\xi\in B(H_2)$ be the operator with $\Tr(\tau'B_\xi)=\langle b(\tau'),\xi\rangle$ for all $\tau'\in\cT(H_2)$.
  The map $\xi\mapsto B_\xi$ is linear, satisfies $\|B_\xi\|\le\|b\|\,\|\xi\|$, and sends $a(\rho)$ to $T_C(\rho)$.
  Since $\|a(\rho_k)\|\le R_a$, weak compactness of the closed ball in the Hilbert space gives a weak limit $\xi_\infty$ along $\cU$.
  For a normal state $\sigma$, \cref{lem:products} yields $\Tr(\sigma Z_{\omega_1})=\ulimit{k}{\cU}\langle b(\sigma),a(\rho_k)\rangle=\langle b(\sigma),\xi_\infty\rangle$, which identifies $Z_{\omega_1}$ with $B_{\xi_\infty}$.
  Every tail belongs to the nonprincipal ultrafilter, so $\xi_\infty$ lies in the weak closure of $\{a(\rho_k):k\ge n\}$ and hence in the norm closure of its convex hull by Mazur’s theorem.
  Thus there is a finite convex combination $\xi=\sum_{k\ge n}t_ka(\rho_k)$ with $\|\xi_\infty-\xi\|\le\varepsilon/(2\|b\|)$.
  Let $\kappa_1,\dots,\kappa_M$ be independent random indices with $\Pr[\kappa_j=k]=t_k$.
  Since the summands are independent with expectation $\xi$, we have
  \[
    \EE\Bigl\|\xi-\frac1M\sum_{j=1}^Ma(\rho_{\kappa_j})\Bigr\|^2
    =\frac1M\,\EE\bigl\|a(\rho_{\kappa_1})-\xi\bigr\|^2\le\frac{R_a^2}{M},
  \]
  so some realization $k_1,\dots,k_M$ satisfies $\|\xi-\frac1M\sum_ja(\rho_{k_j})\|\le R_a/\sqrt M\le\varepsilon/(2\|b\|)$.
  Hence $\|Z_{\omega_1}-T_C\bar\rho\|=\|B_{\xi_\infty-a(\bar\rho)}\|\le\varepsilon$.
  Evaluation at any state $\omega_2$, which has norm one, gives the asserted uniform error bound.
\end{proof}

The operator $C$ of \cref{thm:montecarlo} is stable, by \cref{prop:hilbertfactor} with the two variables exchanged.

In the next corollary, a mixture \emph{simulates} $\omega_1$ within $\varepsilon$, uniformly over all probes, when it satisfies the inequality of \cref{thm:montecarlo}.
The swap operator $F$, which exchanges two systems with the same Hilbert space $K$, and the projections $P_\pm$ are those of \cref{sec:swap}, and the equality tests $[A=B]$ between self-adjoint operators with pure point spectra are those of \cref{prop:comparison}.

\begin{corollary}\label{cor:montecarlo}
  Let $(\rho_k)_{k\in\NN}$ be a sequence of density operators, let $\cU$ be a nonprincipal ultrafilter on $\NN$, let $\omega_1=\ulimit{k}{\cU}\rho_k$, let $n\in\NN$, and let $\varepsilon>0$.
  For the swap operator $F$ and for the equality tests $[A=B]$,
  some equal-weight mixture of $\lceil4/\varepsilon^2\rceil$ preparations $\rho_k$ with $k\ge n$, not necessarily distinct, simulates $\omega_1$ within $\varepsilon$, uniformly over all probes.
  For the swap test $P_+$ and the Hong--Ou--Mandel projection $P_-$, equal-weight mixtures of $\lceil1/\varepsilon^2\rceil$ such preparations suffice.
\end{corollary}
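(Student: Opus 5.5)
The plan is to apply \cref{thm:montecarlo} to explicit Hilbert-space factorizations of the bilinear forms, in each case with $R_a\|b\|\le1$. The swap test and the Hong--Ou--Mandel projection are then reduced to the swap operator by exploiting their affine relation to $F$, which halves the error.

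\emph{Swap operator.} By \cref{lem:swaptrick}, $m_F(\tau,\tau')=\Tr(\tau\tau')=\Tr(\tau'\tau)=\langle \tau'^*,\tau\rangle_{S_2}$. We therefore take $\cH=S_2(K)$, with the linear map $a(\tau)=\tau$ and the conjugate-linear map $b(\tau')=\tau'^*$. For a density operator, $\|\rho\|_2^2=\Tr\rho^2\le1$, so $R_a\le1$. Since $\|\tau'^*\|_2\le\|\tau'\|_1$, we also have $\|b\|\le1$. \Cref{thm:montecarlo} then applies with every $M\ge4/\varepsilon^2$, in particular with $M=\lceil4/\varepsilon^2\rceil$.

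\emph{Equality tests.} We take $\cH=\ell^2$, indexed by the real numbers $\lambda$. We set $a(\tau)=(\Tr(\tau\Pi^A_\lambda))_\lambda$ and $b(\tau')=(\overline{\Tr(\tau'\Pi^B_\lambda)})_\lambda$. Then $\langle b(\tau'),a(\tau)\rangle=\sum_\lambda\Tr(\tau\Pi^A_\lambda)\Tr(\tau'\Pi^B_\lambda)$. This equals $m_{[A=B]}(\tau,\tau')$: the identity holds on product rank-one operators and extends by bilinearity and continuity, as in the proof of \cref{prop:comparison}. For $\rho\in\fD(H_1)$ the entries of $a(\rho)$ are nonnegative and sum to $1$, so $\|a(\rho)\|_{\ell^2}\le\|a(\rho)\|_{\ell^1}=1$. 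For a general trace-class $\tau'$ we use $|\Tr(\tau'\Pi^B_\lambda)|=|\Tr(\Pi^B_\lambda\tau'\Pi^B_\lambda)|\le\|\Pi^B_\lambda\tau'\Pi^B_\lambda\|_1$. The pinching $\tau'\mapsto\sum_\lambda\Pi^B_\lambda\tau'\Pi^B_\lambda$ is a trace-norm contraction, so $\sum_\lambda\|\Pi^B_\lambda\tau'\Pi^B_\lambda\|_1\le\|\tau'\|_1$. Hence $\|b(\tau')\|_{\ell^2}\le\|b(\tau')\|_{\ell^1}\le\|\tau'\|_1$. This pinching estimate, needed to control $b$ on non-self-adjoint trace-class operators, is the only step that requires some care. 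With $R_a\|b\|\le1$ we again obtain $M=\lceil4/\varepsilon^2\rceil$.

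\emph{Swap test and Hong--Ou--Mandel.} We use the identity $P_\pm=\tfrac12(I\pm F)$, which gives $T_{P_\pm}(\rho)=\tfrac12 I\pm\tfrac12 T_F(\rho)$ for $\rho\in\fD(K)$. Taking the ultralimit, $Z^{P_\pm}_{\omega_1}=\tfrac12I\pm\tfrac12Z^F_{\omega_1}$. Here the identity term is handled by $\Tr\rho_k=1$ together with \cref{lem:products}, and every state satisfies $\omega_2(I)=1$. So for any mixture $\bar\rho$ of the $\rho_k$ we have
\[
  (\omega_1\oA\omega_2)(P_\pm)-\omega_2(T_{P_\pm}\bar\rho)=\pm\tfrac12\bigl((\omega_1\oA\omega_2)(F)-\omega_2(T_F\bar\rho)\bigr).
\]
Therefore a mixture that simulates $\omega_1$ within $2\varepsilon$ for $F$ simulates it within $\varepsilon$ for $P_\pm$. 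The swap case with $2\varepsilon$ in place of $\varepsilon$ gives mixtures of $\lceil4/(2\varepsilon)^2\rceil=\lceil1/\varepsilon^2\rceil$ preparations from the prescribed tail, which proves the last assertion.
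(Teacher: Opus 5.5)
Your proposal is correct and follows the paper's proof essentially step for step. You use the same factorizations through $S_2(K)$ and $\ell^2$ with $R_a\|b\|\le1$, fed into \cref{thm:montecarlo}, and the same halving of the error for $P_\pm$ via $Z^{P_\pm}_{\omega_1}=\tfrac12(I\pm Z^F_{\omega_1})$. Your pinching argument spells out the bound $\sum_\lambda|\Tr(\tau'\Pi^B_\lambda)|\le\|\tau'\|_1$, which the paper only asserts. The same argument also shows that $a$ is bounded on all of $\cT(H_1)$; the theorem requires this, and you checked only its values on density operators.
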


\begin{proof}
  For $F$, take for $\cH$ the space $S_2(K)$ of Hilbert--Schmidt operators on $K$, with $a(\tau)=\tau$ and $b(\tau')=\tau'^*$, whence $\langle\tau'^*,\tau\rangle_{S_2}=\Tr(\tau'\tau)$;
  since the Hilbert--Schmidt norm satisfies $\|\tau\|_2\le\|\tau\|_1$, we have $R_a\le1$ and $\|b\|\le1$.
  On states, the slice maps satisfy $T_{P_\pm}(\rho)=\frac12(I\pm T_F(\rho))$, and correspondingly, writing
  $Z^F_{\omega_1}$ and $Z^{P_\pm}_{\omega_1}$ for the operators $Z_{\omega_1}$ of $F$ and of $P_\pm$, we have $Z^{P_\pm}_{\omega_1}=\frac12(I\pm Z^F_{\omega_1})$, so the error for $P_\pm$ is half the error for $F$.
  For $[A=B]$, take $\cH=\ell^2$, $a(\tau)=(\Tr\tau\Pi^A_\lambda)_\lambda$ and $b(\tau')=(\overline{\Tr\tau'\Pi^B_\lambda})_\lambda$, where $\Pi^A_\lambda$ and $\Pi^B_\lambda$ are the spectral projections of $A$ and $B$ for the eigenvalue $\lambda$;
  then $\|a(\tau)\|_2\le\sum_\lambda|\Tr\tau\Pi^A_\lambda|\le\|\tau\|_1$, and similarly for $b$.
\end{proof}

These bounds do not depend on the dimension of the systems.

Hilbert space can be replaced by spaces of Rademacher type.
A Banach space has \emph{Rademacher type} $r$
if, for every positive integer $m$ and all vectors $x_1,\dots,x_m$ of the space, the average of $\|\pm x_1\pm\dots\pm x_m\|$ over the $2^m$ choices of signs is at most a constant multiple, independent of $m$, of $(\|x_1\|^r+\dots+\|x_m\|^r)^{1/r}$.
If, in the setting of \cref{thm:montecarlo}, the slice map
$T_C$ factors through a reflexive Banach space of Rademacher type
$r>1$ in place of the Hilbert space $\cH$, the same argument, with the type-$r$ form of Maurey’s empirical method~\cite{Pisier:1981}, yields mixtures of size $M=O(\varepsilon^{-r/(r-1)})$ at accuracy $\varepsilon$.
Reflexivity alone, which characterizes stability, yields no rate through this argument (see \cref{ques:rates}).

%#
%### SUBSECTION: Historical remarks
%#
\subsection{Historical remarks}\label{sec:hist-sim}
That the definitions of the types of a stable formula over a model, in real-valued model theory, are uniform limits of polynomials or lattice expressions in instances follows from the Stone--Weierstrass theorem (see~\cite{BenYaacov-Usvyatsov:2010});
Ben Yaacov observed that, by Mazur’s theorem, averages of instances already suffice~\cite{BenYaacov:2014a}.
Pták’s lemma~\cite{Ptak:1963b} is the classical source of convex means in the study of weak compactness.
The sampling argument in the proof of \cref{thm:montecarlo} is Maurey’s empirical method, published by Pisier~\cite{Pisier:1981}.

%#
%## SECTION: Probability: Fubini’s theorem for finitely additive measures
%#
\section{Probability: Fubini’s theorem for finitely additive measures}\label{sec:probability}

\noindent Randomization gives another formulation of the same exchange problem.
A finitely additive probability measure on preparations has a barycenter that may be idealized, and the two iterated averages of the expectation evaluate the two Arens extensions at these barycenters.
We show that their agreement for every pair of such measures is equivalent to stability.
For countably additive Borel probabilities in the trace-norm topology, the barycenters are normal and the averages always commute;
finitely additive measures are therefore essential to the characterization.
The order projection makes the obstruction explicit through noncommuting ultrafilter products.

%#
%### SUBSECTION: Fubini’s theorem for means
%#
\subsection{Fubini’s theorem for means}
A \emph{mean} on a set $\Gamma$ is a finitely additive probability measure defined on all subsets of $\Gamma$;
integration identifies such a measure with a positive linear functional on the bounded complex-valued functions on $\Gamma$ that takes the value $1$ at the constant function $1$, and we denote the measure and the functional by the same letter $\mu$.
We write $\int g\,d\mu$ for $\mu(g)$.
The \emph{barycenter} of a mean $\mu$ on $\fD(H)$ is the state $\omega_\mu(A)=\int\Tr(\rho A)\,d\mu(\rho)$ of $B(H)$.

\begin{keytheorem}[Fubini’s theorem for means]\label{thm:fubini}
  Let $C\in B(H_1\otimes H_2)$.
  Then $C$ is stable if and only if, for all means $\mu$ on $\fD(H_1)$ and $\nu$ on $\fD(H_2)$,
  \[
    \int\!\!\Bigl(\int\varphi_C(\rho,\sigma)\,d\mu(\rho)\Bigr)d\nu(\sigma)
    =\int\!\!\Bigl(\int\varphi_C(\rho,\sigma)\,d\nu(\sigma)\Bigr)d\mu(\rho).
  \]
\end{keytheorem}

\begin{proof}
  The idea is that, since the expectation is affine in each state, iterated integrals depend only on barycenters, whence they are the values at $C$ of the two products of the barycenters (see \cref{def:products}).
  The slice map $S_C$ sends each preparation $\sigma$ of Bob’s system to the effective operator $S_C(\sigma)$ that it leaves on Alice’s side, and the expectation $\varphi_C(\rho,\sigma)=\Tr(\rho\,S_C\sigma)$ is affine in $\rho$.
  Therefore the inner integral $\int\varphi_C(\rho,\sigma)\,d\mu(\rho)$
  equals $\omega_\mu(S_C\sigma)=\Tr(\sigma Z_{\omega_\mu})$, where
  $Z_{\omega_\mu}$ is the effective operator that the barycenter $\omega_\mu$ leaves on Bob’s side, and integration against $\nu$ yields $(\omega_\mu\oA\omega_\nu)(C)$, the product with Alice’s system idealized first.
  Symmetrically, the other iterated integral is $(\omega_\mu\oB\omega_\nu)(C)$.
  Every state $\omega$ is a barycenter:
  by \cref{lem:density}, there is a net $(\rho_i)$ of normal states, indexed by a directed set, with limit
  $\omega$, and if $\cW$ is an ultrafilter on its index set containing all tails, then $\mu(g)=\ulimit{i}{\cW}g(\rho_i)$ defines a mean with barycenter $\omega$.
  Thus the identity holds for all means if and only if the two products agree on all pairs of states, which is condition~(5) of \cref{thm:weakcompact}, that is, if and only if $C$ is stable.
\end{proof}

%#
%### SUBSECTION: Countably additive measures
%#
\subsection{Countably additive measures}
The means in \cref{thm:fubini} cannot be replaced by countably additive probability measures, because, as we show in \cref{prop:sigmaadditive} below, these never detect instability.

\begin{proposition}[Countably additive measures never detect instability]\label{prop:sigmaadditive}
  Let $\mu$ be a countably additive Borel probability measure on $\fD(H_1)$, with the trace norm, and let $\nu$ be a mean on $\fD(H_2)$.
  Then for every $C\in B(H_1\otimes H_2)$, the two iterated integrals of $\varphi_C$ against $\mu$ and $\nu$ agree.
\end{proposition}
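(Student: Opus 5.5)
The plan is to reduce, as in the proof of \cref{thm:fubini}, both iterated integrals to the two products evaluated at barycenters, and then to show that the barycenter of a countably additive Borel probability measure on $\fD(H_1)$ is a \emph{normal} state. Once that is in hand, \cref{cor:normal-factor} gives equality at once.

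\textbf{Step 1: the inner integrals reduce to barycenters.} For each $\sigma$, the function $\rho\mapsto\varphi_C(\rho,\sigma)$ is trace-norm continuous, hence Borel, and bounded. Thus $\int\varphi_C(\rho,\sigma)\,d\mu(\rho)$ is an honest Lebesgue integral. For each $\rho$, the inner integral against $\nu$ is $\int\Tr(\sigma\,S_C'{}^{\!*}\ldots)$; more simply, it equals $\omega_\nu(T_C\rho)=\Tr(\rho\,W_{\omega_\nu})$, by affinity in $\sigma$ exactly as in the proof of \cref{thm:fubini}. This is a trace-norm continuous function of $\rho$, so it is $\mu$-integrable.

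\textbf{Step 2: the barycenter of $\mu$ is normal.} I would define the Bochner integral $\bar\rho=\int\rho\,d\mu(\rho)$ in the separable Banach space $\cT(H_1)$. The identity map on $\fD(H_1)$ is bounded and continuous, and $\cT(H_1)$ is separable because $H_1$ is separable, so the map is strongly measurable and Bochner integrable. The set $\fD(H_1)$ is closed and convex in trace norm, so $\bar\rho\in\fD(H_1)$; one can see this either because positivity and the trace pass through the integral, or by testing against functionals. For every $A\in B(H_1)$, the functional $\tau\mapsto\Tr(\tau A)$ is continuous, and therefore $\omega_\mu(A)=\int\Tr(\rho A)\,d\mu=\Tr(\bar\rho A)$. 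Hence the barycenter $\omega_\mu$ is the normal state given by $\bar\rho$.

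\textbf{Step 3: conclude.} The first iterated integral is $\int\Tr(\sigma Z_{\omega_\mu})\,d\nu(\sigma)=(\omega_\mu\oA\omega_\nu)(C)$, with $Z_{\omega_\mu}=T_C(\bar\rho)$. The second is $\int\Tr(\rho W_{\omega_\nu})\,d\mu(\rho)=\Tr(\bar\rho\,W_{\omega_\nu})=(\omega_\mu\oB\omega_\nu)(C)$. Since $\omega_\mu$ is normal, \cref{cor:normal-factor} gives equality; concretely, both values equal $\omega_\nu(T_C\bar\rho)$.

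\textbf{Main obstacle.} The only delicate point is the measurability and Bochner integrability in Step 2, which I would settle using separability of $\cT(H_1)$ via Pettis's theorem. The bookkeeping for the mean $\nu$ is purely the finitely additive linearity already used in \cref{thm:fubini}.
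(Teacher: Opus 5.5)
Your proposal is correct and follows the paper's proof essentially step for step: the Bochner integral $\bar\rho=\int\rho\,d\mu(\rho)$ in the separable space $\cT(H_1)$ shows that the barycenter of $\mu$ is the normal state given by $\bar\rho$, the two iterated integrals are then the two products at the barycenters, and \cref{cor:normal-factor} gives their equality. The only difference is cosmetic: you obtain measurability of the outer $\mu$-integrand by writing $\rho\mapsto\int\varphi_C(\rho,\sigma)\,d\nu(\sigma)$ as the trace-norm continuous functional $\rho\mapsto\Tr(\rho\,W_{\omega_\nu})$, whereas the paper uses the uniform Lipschitz bound $\|C\|$; you should also delete the stray incomplete expression in Step~1.
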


\begin{proof}
  Since $H_1$ is separable, the trace-class space $\cT(H_1)$ is separable, and the identity map $\rho\mapsto\rho$ on $\fD(H_1)$ is strongly measurable and has constant trace norm one.
  Its Bochner integral $\bar\rho=\int\rho\,d\mu(\rho)$ therefore exists, is positive, and has trace one;
  the barycenter of $\mu$ is the normal state given by $\bar\rho$.
  Both iterated integrals are well defined:
  the functions $\rho\mapsto\varphi_C(\rho,\sigma)$ are Lipschitz with common constant $\|C\|$, and averaging them against the mean $\nu$ preserves the bound, so the outer integrand for $\mu$ is Borel measurable.
  By the proof of \cref{thm:fubini}, the two iterated integrals are the values at $C$ of the two products of the barycenters, and these values agree by \cref{cor:normal-factor}.
\end{proof}

Failure of this Fubini identity therefore requires genuinely finitely additive averages on both systems.
Their barycenters must both be idealized, since a normal factor makes the two products agree.

%#
%### SUBSECTION: Ultrafilter products
%#
\subsection{Ultrafilter products}
For nonprincipal ultrafilters $\cU$ and $\cV$ on $\NN$, let $\cU\otimes\cV$ be the ultrafilter on $\NN^2$ consisting of the sets $Q\subseteq\NN^2$ with $\{k:\{l:(k,l)\in Q\}\in\cV\}\in\cU$.
Let $C\in B(H_1\otimes H_2)$ be given, let $(\rho_k)_{k\in\NN}$ and $(\sigma_l)_{l\in\NN}$ be sequences in $\fD(H_1)$ and $\fD(H_2)$, and let $\omega_1=\ulimit{k}{\cU}\rho_k$ and $\omega_2=\ulimit{l}{\cV}\sigma_l$ be their ultralimit states.
By \cref{lem:products}, the product $(\omega_1\oB\omega_2)(C)$, with Bob’s system idealized first, is the limit of
$\varphi_C(\rho_k,\sigma_l)$ along $\cU\otimes\cV$, and the product $(\omega_1\oA\omega_2)(C)$ is the limit along the transpose of $\cV\otimes\cU$, its image under the map $(l,k)\mapsto(k,l)$.
The tensor square of a single nonprincipal ultrafilter $\cW$ on $\NN$ is not symmetric:
the set $\{(k,l):k<l\}$ belongs to $\cW\otimes\cW$ but not to the transpose of $\cW\otimes\cW$.
The order projection of \cref{thm:unstable} is built from orthonormal sequences $(e_k)_{k\in\NN}$ in $H_1$ and $(f_l)_{l\in\NN}$ in $H_2$:
it projects onto the closed span of the vectors $e_k\otimes f_l$ with $k<l$, whence its expectation on the pairs $(e_k,f_l)$ is the indicator of the set $\{(k,l):k<l\}$.
For $\omega_1=\ulimit{k}{\cW}e_k$ and $\omega_2=\ulimit{l}{\cW}f_l$, the product $\oB$, with Bob’s system idealized first, therefore takes the value $1$ at the order projection, whereas the product $\oA$ takes the value $0$.
The ultrafilter $\cW$ corresponds to a $\{0,1\}$-valued mean on $\NN$, which gives measure $1$ exactly to the sets in $\cW$;
by \cref{prop:sigmaadditive}, no countably additive version of this example exists.

%#
%### SUBSECTION: Means, mixtures and samples
%#
\subsection{Means, mixtures and samples}
Finitely supported means on $\fD(H_1)$ correspond to the finite mixtures a laboratory can implement.
In this language, \cref{thm:polyapprox} says that, for a stable observable and every $\varepsilon>0$, a mean
$\mu$ on $\fD(H_1)$ in \cref{thm:fubini} can be replaced by a finitely supported mean, with an error at most
$\varepsilon$ uniformly in the mean $\nu$ on $\fD(H_2)$, because the iterated integrals depend on $\mu$ only through its barycenter (see the proof of \cref{thm:fubini}).
For observables whose expectation factors through a Hilbert space, \cref{thm:montecarlo} replaces Mazur’s existence statement by an empirical average of independent draws.

%#
%### SUBSECTION: Historical remarks
%#
\subsection{Historical remarks}\label{sec:hist-prob}
Pták’s combinatorial lemma~\cite{Ptak:1963b} derives weak compactness from the existence of convex means that interchange limits, and the Fubini form for finitely additive means was studied by Simons~\cite{Simons:1968} and by Levy~\cite{Levy:2007}.
In model theory, Keisler measures are finitely additive probability measures on definable sets~\cite{Keisler:1987}.
For global measures in a theory without the independence property, an NIP theory, invariance over a common small model gives Borel dependence on parameters and hence products in both orders of integration.
Here global means that parameters range over a large saturated model, and invariance means that the measure is unchanged by automorphisms fixing the small model pointwise.
Commutation of these products is tied to stability and to generic stability, a property of a single type or measure even when the theory is not stable.
In an NIP theory, a global type or measure is generically stable exactly when it is both definable over and finitely satisfiable in some small model~\cite{Hrushovski-Peterzil-Pillay:2008,Hrushovski-Pillay:2011a}:
definability gives continuous dependence on parameter types, while finite satisfiability requires each formula in the type, or each definable set of positive measure, to have a witness in the model.
The two products $\oA$ and $\oB$ of \cref{sec:setting}, with Alice’s and with Bob’s system idealized first, respectively, are the operator-algebraic counterparts of these two orders of integration (see the proof of \cref{thm:fubini}).

%#
%## SECTION: Independence: Harrington’s lemma and the independent product
%#
\section{Independence: Harrington’s lemma and the independent product}\label{sec:independence}

\noindent On the space of stable bounded operators, we construct a symmetric product of arbitrary states of the two systems.
For each stable observable, we prove that the resulting expectation is the unique separately weak$^*$ continuous extension of its values on pairs of density operators.
We compare this construction with Harrington’s lemma, according to which the definitions of two types over a model of a stable theory commute~\cite{Harnik-Harrington:1984}.
We recall the independence calculus that gives this exchange its model-theoretic meaning, and then establish the analytic counterpart for observables.

%#
%### SUBSECTION: Abstract independence
%#
\subsection{Abstract independence}\label{sec:abstract-independence}
The axiomatic study of independence is older than model theory:
it begins with Steinitz’s exchange argument for transcendence bases and with van der Waerden’s abstract dependence relations, and leads to matroids and pregeometries, closure operators with the exchange property of linear span
(see the historical remarks at the end of this section).
In model theory, Shelah’s forking~\cite{Shelah:1990a} carries independence beyond pregeometries.
Its negation, nonforking, singles out the extensions of a type to larger sets of parameters whose realizations are independent of the new parameters;
nonforking independence is not given by a closure operator, yet in a stable theory it is symmetric and has existence and uniqueness of independent extensions over models.
Lascar and Poizat~\cite{Lascar-Poizat:1979} described it through definability of types and through heirs and coheirs of types over a model:
the extensions in which every formula, once its new parameters are replaced by suitable parameters from the model, belongs to the original type, and those in which every formula is satisfied in the model, respectively.
Harnik and Harrington~\cite{Harnik-Harrington:1984} characterized stable theories as those carrying an abstract independence relation with these properties.

%#
%### SUBSECTION: Harrington’s lemma as an exchange of limits
%#
\subsection{Harrington’s lemma as an exchange of limits}\label{sec:harrington}
We recall the two-valued setting of Iovino’s notes on stability theory~\cite[Chapter~4]{Iovino:0000e}.
Let $\cM$ be a model of a stable complete first-order theory, let $p$ and $q$ be complete types over $\cM$ in finite tuples of variables $x$ and $y$, and let $\varphi(x,y)$ be a formula with parameters from $\cM$.
Stability provides definitions $d_p\varphi(y)$ and $d_q\varphi(x)$ over $\cM$:
for every tuple $b$ from $\cM$, the formula $\varphi(x,b)$ belongs to $p$ if and only if $d_p\varphi(b)$ holds in $\cM$, and symmetrically for $q$.
In \cref{sec:independent-product} below, a type over $\cM$ will correspond to a state of $B(H)$.
Realizing one coheir before another corresponds to taking the limit over the states of the corresponding system as the inner limit.

\begin{fact}[Harrington’s lemma~\cite{Harnik-Harrington:1984}]\label{fact:harrington}
  Let $\cM$ be a model of a stable complete first-order theory, let $p$ and $q$ be complete types over $\cM$ in finite tuples of variables $x$ and $y$, and let $\varphi(x,y)$ be a formula with parameters from $\cM$.
  Then the definitions $d_p\varphi(y)$ and $d_q\varphi(x)$ of $p$ and $q$ over $\cM$ satisfy $d_p\varphi(y)\in q$ if and only if $d_q\varphi(x)\in p$.
\end{fact}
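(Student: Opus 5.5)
Harrington's lemma (\cref{fact:harrington}) is an exchange of two limits taken along coheirs, and I would prove it through the ladder extraction of \cref{lem:ladder}, made two-valued by the stability of $\varphi$.

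The first step passes to a monster model $\mathfrak{C}\succ\cM$. Let $p'$ be a global coheir of $p$ over $\cM$, that is, a global type finitely satisfiable in $\cM$; such an extension exists because the formulas of $p$ together with the negations of the formulas not satisfiable in $\cM$ form a consistent set. In a stable theory $p'$ is definable by the same scheme $d_p$, so it is the unique nonforking extension of $p$, and likewise for a coheir $q'$ of $q$. Choose $b\models q$ and then $a\models p'|_{\cM b}$. Definability gives $\varphi(a,b)$ if and only if $\models d_p\varphi(b)$, and this holds if and only if $d_p\varphi(y)\in q$. Symmetrically, choose $a_0\models p$ and $b_0\models q'|_{\cM a_0}$; then $\varphi(a_0,b_0)$ holds if and only if $d_q\varphi(x)\in p$. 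It remains to show that $\varphi(a,b)$ and $\varphi(a_0,b_0)$ have the same truth value.

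The second step, which I expect to be the main obstacle, is to reconcile the two orders of realization. I would argue by contradiction, assuming $d_p\varphi(y)\in q$ and $\neg d_q\varphi(x)\in p$. I would then build inductively $a_i\in\cM$ and $b_i\in\cM$, alternating as in the proof of \cref{lem:ladder}, with membership in the ultrafilter replaced by finite satisfiability in $\cM$.

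At stage $m$, choose $a_m\in\cM$ realizing the finitely many conditions $\neg d_q\varphi(x)$ together with $\varphi(x,b_i)\leftrightarrow d_p\varphi(b_i)$ for $i<m$. These conditions belong to $p$: the first by assumption, and the others because $p$ is defined by $d_p$. Since $p$ is a complete type over a model, it is finitely satisfiable in $\cM$, so such an $a_m$ exists. Then choose $b_m\in\cM$ realizing $d_p\varphi(y)$ together with $\varphi(a_i,y)\leftrightarrow d_q\varphi(a_i)$ for $i\le m$. These formulas lie in $q$ by the same reasoning.

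The resulting sequences satisfy two conditions. For $i\le m$ we have $\varphi(a_i,b_m)\leftrightarrow d_q\varphi(a_i)$, which is false, since each $a_i$ was chosen to satisfy $\neg d_q\varphi(x)$. For $i>m$ we have $\varphi(a_i,b_m)\leftrightarrow d_p\varphi(b_m)$, which is true. This is an infinite ladder for $\varphi$, so $\varphi$ has the order property, contradicting stability. The opposite mixed assumption yields the reversed staircase in the same way.

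Finally, I would note that the model-theoretic coheirs are not actually needed once the second step is in place. The direct construction inside $\cM$ only uses two facts: every complete type over a model is finitely satisfiable in it, and $p$ and $q$ are definable, which is where the stability of the theory enters. The second step is precisely the two-valued analogue of \cref{cor:ultra-stable}, with the definitions $d_p$ and $d_q$ playing the roles of the row limits and column limits.
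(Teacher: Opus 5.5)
Your argument is correct and is essentially the paper's. The paper reads the two sides of the lemma as the two iterated limits of $\varphi$ along ultrafilters whose traces are $p$ and $q$, and it appeals to the exchange of limits in \cref{cor:ultra-stable}, which rests on the alternating extraction of \cref{lem:ladder}. Your second step runs that same extraction inside $\cM$, with finite satisfiability of $p$ and $q$ in place of ultrafilter membership; this also spares you from having to use ultrafilters on $\cM$ rather than on $\NN$. Your first step, with global coheirs and nonforking extensions, is never used and rests on heavier facts than the lemma needs, so you can drop it, as you yourself observe.
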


Harrington’s lemma is the key step in the proof that nonforking independence is symmetric in stable theories, and hence in the Harnik--Harrington characterization~\cite{Harnik-Harrington:1984}.
For us, the point is that Harrington’s lemma asserts that two iterated limits can be exchanged:
every type over $\cM$ is finitely satisfiable in $\cM$, so the type
$p$ is the trace of an ultrafilter $\cU$ on the tuples from $\cM$ of the length of $x$, in the sense that a formula $\theta(x)$ with parameters from $\cM$ belongs to $p$ if and only if the set of its solutions in $\cM$ belongs to $\cU$.
Let $\cV$ be an ultrafilter with trace $q$, and regard $\varphi$ as a $\{0,1\}$-valued function on pairs of tuples from $\cM$.
Then the two sides of Harrington’s lemma take the form
\[
\begin{aligned}
  d_p\varphi(y)\in q&\quad\text{if and only if}\quad \ulimit{b}{\cV}\ \ulimit{a}{\cU}\varphi(a,b)=1,\\
  d_q\varphi(x)\in p&\quad\text{if and only if}\quad \ulimit{a}{\cU}\ \ulimit{b}{\cV}\varphi(a,b)=1,
\end{aligned}
\]
so, read in the language of types, the agreement of the two iterated ultrafilter limits of \cref{cor:ultra-stable} is exactly what Harrington’s lemma asserts.
The computation takes place entirely in $\cM$:
it needs neither heirs nor elements realizing types in an elementary extension of $\cM$.
The same computation explains the two orders of realization:
let $b$ realize $q$, and let $a$ realize the coheir of
$p$ over $\cM b$, the set $\cM$ with the entries of $b$ adjoined, determined by $\cU$, that is, the type over $\cM b$ that contains a formula exactly when the set of its solutions in $\cM$ belongs to $\cU$.
Then the formula $\varphi(a,b)$ holds if and only if $\ulimit{a'}{\cU}\ulimit{b'}{\cV}\varphi(a',b')=1$, and realizing $p$ first and then the coheir of $q$ over $\cM a$ determined by $\cV$ exchanges the limits.
Thus the two orders of realizing coheirs yield the same type exactly when the two iterated limits agree, and the symmetry of independence reduces to the agreement of iterated ultrafilter limits.
The proofs of these statements, and of the Harnik--Harrington characterization, are given in the same notes~\cite[Chapter~4]{Iovino:0000e};
the real-valued versions, for formulas with values in a compact interval, are proved in Iovino’s thesis~\cite{Iovino:1994a} (see also~\cite{BenYaacov-Usvyatsov:2010}).

%#
%### SUBSECTION: The independent product of idealized preparations
%#
\subsection{The independent product of idealized preparations}\label{sec:independent-product}
The correspondence with \cref{sec:setting} is as follows.
A type over a model corresponds to a preparation represented by a state of $B(H)$, which may be normal or idealized.
The trace of an ultrafilter corresponds to an ultralimit state.
Realizing Bob’s coheir first and Alice’s over it computes the inner limit over Bob’s states, just as the product $\oB$ of \cref{def:products}, in which Bob’s system is idealized first, computes it;
the other order computes $\oA$.
Under this correspondence, the conclusion of Harrington’s lemma becomes condition~(5) of \cref{thm:weakcompact}, the agreement of the two products on every pair of states.
The analogy is one of form:
the theorem below is proved directly from \cref{thm:weakcompact}, without model theory.
To state the resulting symmetry, let $\cO$ be the set of stable bounded operators in $B(H_1\otimes H_2)$, whose self-adjoint members are the stable observables.
In \cref{prop:operatorsystem} below, we show that $\cO$ is a norm-closed self-adjoint subspace of $B(H_1\otimes H_2)$ containing $I$;
a \emph{state} of $\cO$ is a linear functional on $\cO$ that is positive on its positive elements and takes the value $1$ at $I$.
For states
$\omega_1$ of $B(H_1)$ and $\omega_2$ of $B(H_2)$, the \emph{independent product} $\omega_1\otimes\omega_2$ is the functional on $\cO$ that takes at each $C\in\cO$ the common value of the two products, which agree at $C$ by \cref{thm:weakcompact}:
\[
  (\omega_1\otimes\omega_2)(C)=(\omega_1\oA\omega_2)(C)=(\omega_1\oB\omega_2)(C)
  \qquad(C\in\cO).
\]
The independent product $\omega_2\otimes\omega_1$, for states of $B(H_2)$ and of $B(H_1)$, is defined in the same way on the stable bounded operators on $H_2\otimes H_1$, and $F_{12}\colon H_1\otimes H_2\to H_2\otimes H_1$ denotes the unitary that exchanges the two factors.

\begin{keytheorem}[The independent product]\label{thm:independent-product}
  Let $\cO$ be the set of stable bounded operators in $B(H_1\otimes H_2)$.
  Then the following hold:
  \begin{enumerate}[label=\textup{(\alph*)}]
  \item
    \textup{(Existence.)} For all states $\omega_1$ of $B(H_1)$ and $\omega_2$ of $B(H_2)$, the independent product $\omega_1\otimes\omega_2$ is a state of $\cO$.
  \item
    \textup{(Extension.)} If states $\omega_1$ of $B(H_1)$ and $\omega_2$ of $B(H_2)$ are given by density operators $\rho$ and $\sigma$, then $(\omega_1\otimes\omega_2)(C)=\Tr((\rho\otimes\sigma)C)$ for all $C\in\cO$.
  \item
    \textup{(Uniqueness.)} For every $C\in\cO$, the assignment
    $(\omega_1,\omega_2)\mapsto (\omega_1\otimes\omega_2)(C)$
    is separately weak$^*$ continuous, and it is the only separately weak$^*$ continuous function on $\cS(B(H_1))\times\cS(B(H_2))$ that takes the value $\Tr((\rho\otimes\sigma)C)$ at every pair of normal states given by density operators $\rho$ and $\sigma$.
  \item
    \textup{(Symmetry.)} For every $C\in\cO$, the operator $F_{12}CF_{12}^*$
    on $H_2\otimes H_1$ is stable, and $(\omega_2\otimes\omega_1)(F_{12}CF_{12}^*)=(\omega_1\otimes\omega_2)(C)$
    for all states $\omega_1$ of $B(H_1)$ and $\omega_2$ of $B(H_2)$.
  \end{enumerate}
\end{keytheorem}

\begin{proof}
  (a) The slice map $S_C$, which sends each preparation of Bob’s system to the effective operator that it leaves on Alice’s side, is linear in
  $C$, and so is $(\omega_1\oA\omega_2)(C)=\omega_2(Z_{\omega_1})$, where $Z_{\omega_1}=S_C'(\omega_1)$ is the effective operator that $\omega_1$ leaves on Bob’s side.
  If $C\ge0$, then the operator $S_C(\sigma)$ is positive for every positive $\sigma\in\cT(H_2)$, so $\Tr(\sigma Z_{\omega_1})=\omega_1(S_C\sigma)\ge0$ for every such $\sigma$;
  hence the operator $Z_{\omega_1}$ is positive, and $(\omega_1\oA\omega_2)(C)=\omega_2(Z_{\omega_1})\ge0$.
  For $C=I$, we have $S_I(\sigma)=\Tr(\sigma)I$ and $Z_{\omega_1}=I$.

  (b) This is \cref{cor:normal-factor}.

  (c) Both statements come from the step of the proof of \cref{thm:weakcompact} in which condition~(5), the agreement of the two products, is shown to be equivalent to condition~(6), the existence of a separately continuous extension.

  (d) Write $C'=F_{12}CF_{12}^*$.
  For density operators we have $\varphi_{C'}(\sigma,\rho)=\varphi_C(\rho,\sigma)$, so the operator $C'$ is stable:
  interchanging the two variables interchanges the two iterated limits.
  Let $(\rho_i)$ and $(\sigma_j)$ be nets of normal states of $B(H_1)$ and of $B(H_2)$, indexed by directed sets, with limits $\omega_1$ and $\omega_2$.
  By \cref{lem:products}, the product $(\omega_2\oB\omega_1)(C')$, in which the second factor $\omega_1$ is idealized first, equals $\lim_j\lim_i\varphi_{C'}(\sigma_j,\rho_i)=\lim_j\lim_i\varphi_C(\rho_i,\sigma_j) =(\omega_1\oA\omega_2)(C)$.
  Since $C'$ and $C$ are stable, the two sides are the independent products $(\omega_2\otimes\omega_1)(C')$ and $(\omega_1\otimes\omega_2)(C)$, and (d) follows.
\end{proof}

In the language of independence relations, (a) is the existence of an independent combination, (c) is its uniqueness (stationarity), and (d) is its symmetry;
by \cref{thm:weakcompact}, the stable bounded operators are exactly those at which the two products agree for all pairs of states.
Outside $\cO$ the independent combination is lost:
by \cref{thm:weakcompact}, an unstable observable admits no separately continuous extension of its expectation to idealized states, and the two orders of idealization disagree on some pair of idealized states.
For the order projection, whose expectation on pairs of vectors from two orthonormal sequences records whether Alice’s index is smaller than Bob’s, the two orders of idealization give
$1$ with Bob’s system idealized first and $0$ with Alice’s (see \cref{thm:unstable}).

%#
%### SUBSECTION: Historical remarks
%#
\subsection{Historical remarks}\label{sec:hist-independence}
Steinitz~\cite{Steinitz:1910} introduced transcendence bases of field extensions and proved, by the exchange argument that also gives the dimension of a vector space, that any two of them have the same cardinality.
Van der Waerden~\cite{vanderWaerden:1930} observed that linear and algebraic dependence obey the same few rules,
and derived the existence of bases and the invariance of their cardinality from an abstract dependence relation whose central axiom is exchange.
Whitney~\cite{Whitney:1935} isolated these rules as \emph{matroids}, with equivalent axiomatizations by independent sets, bases, circuits and rank;
Nakasawa arrived at the theory independently at about the same time~\cite{Nishimura-Kuroda:2009}, and Birkhoff~\cite{Birkhoff:1935} and Mac Lane~\cite{MacLane:1936} related it to lattices and to projective geometry.
Crapo and Rota~\cite{Crapo-Rota:1970} developed matroids as \emph{combinatorial geometries};
in their terminology a closure operator with the exchange property is a \emph{pregeometry}, the term still used in model theory.
Baldwin and Lachlan~\cite{Baldwin-Lachlan:1971a} showed that algebraic closure in a strongly minimal set, an infinite definable set whose definable subsets are finite or cofinite in every elementary extension, is a pregeometry, so that every model of a strongly minimal theory has a dimension.
Kim and Pillay~\cite{Kim-Pillay:1997} extended the Harnik--Harrington characterization to simple theories, a class that properly contains the stable theories.

%#
%## SECTION: Combinatorics: polynomial versus exponential growth
%#
\section{Combinatorics: ladders, trees and uniform stability}\label{sec:trees}

\noindent The preceding characterizations concern stability on the given preparation spaces.
To bound adaptive discrimination at each positive margin, we pass to uniform stability.
We show that bounded ladder lengths, bounded decision-tree heights and stability of the ultrapower expectation are equivalent forms of this condition.
The tree bounds yield polynomial growth and finite mistake bounds in the fixed-threshold game, uniform in the threshold, as stated in Theorem~B.
A stable observable need not satisfy this stronger condition, as the sector comparison below demonstrates.

Throughout the section, we fix a self-adjoint observable $C$ on $H_1\otimes H_2$, where $H_1$ and $H_2$ are the Hilbert spaces of Alice’s and Bob’s systems;
its expectation $\varphi_C(\rho,\sigma)=\Tr\bigl((\rho\otimes\sigma)C\bigr)$, for density operators $\rho\in\fD(H_1)$ and $\sigma\in\fD(H_2)$, is therefore real.
For a general bounded operator, the results apply to its hermitian parts (see \cref{def:uniform}).
For every integer $n\ge0$, we write $\{0,1\}^n$ for the set of $0$--$1$ sequences of length $n$, and $\{0,1\}^{<n}$ for the union $\bigcup_{m<n}\{0,1\}^m$;
the sets of infinite and of finite $0$--$1$ sequences are written $\{0,1\}^\NN$ and $\{0,1\}^{<\NN}$, respectively.
For a finite $0$--$1$ sequence $s$ and a finite or infinite $0$--$1$ sequence $\eta$, we write $s\seg\eta$ if $s$ is an initial segment of $\eta$, and $\cat{s}{\iota}$ for $s$ followed by $\iota\in\{0,1\}$.

%#
%### SUBSECTION: From ladders to decision trees
%#
\subsection{From ladders to decision trees}\label{sec:ladders-trees}

\begin{definition}[Ladders and trees]\label{def:trees}
  Let $\delta>0$ be given.
  \begin{enumerate}[label=\textup{(\alph*)}]
  \item
    For a positive integer $N$, a \emph{$\delta$-ladder of length $N$} for $C$ consists of states $\rho_1,\dots,\rho_N\in\fD(H_1)$ and $\sigma_1,\dots,\sigma_N\in\fD(H_2)$, a real number $\gamma$ and a sign $\theta\in\{\pm1\}$ such that
    \[
      \theta\bigl(\varphi_C(\rho_i,\sigma_j)-\gamma\bigr)\ge\delta/2\ \text{ if }i\le j,
      \qquad
      \theta\bigl(\varphi_C(\rho_i,\sigma_j)-\gamma\bigr)\le-\delta/2\ \text{ if }i>j .
    \]
    Infinite ladders are defined in the same way.
  \item
    For an integer $n\ge0$, a \emph{$\delta$-tree of height $n$} for $C$ consists of states
    $\rho_\eta\in\fD(H_1)$ for $\eta\in\{0,1\}^n$ (the \emph{leaves}), states $\sigma_s\in\fD(H_2)$ for $s\in\{0,1\}^{<n}$ (the \emph{nodes}), a real number $\gamma$ and a sign $\theta\in\{\pm1\}$ such that, whenever $\cat{s}{\iota}\seg\eta$, we have
    \[
      \theta\bigl(\varphi_C(\rho_\eta,\sigma_s)-\gamma\bigr)\ge\delta/2\ \text{ if }\iota=1,
      \qquad
      \theta\bigl(\varphi_C(\rho_\eta,\sigma_s)-\gamma\bigr)\le-\delta/2\ \text{ if }\iota=0 .
    \]
  \end{enumerate}
\end{definition}

A tree describes an adaptive measurement protocol.
Bob starts at the root, the empty sequence $\emptyset$, and, at each node $s$, probes with $\sigma_s$;
the expectation of $C$ falls on one side of the threshold $\gamma$ or the other, and Bob moves to $\cat{s}{1}$ or $\cat{s}{0}$ accordingly.
If Alice prepared one of the leaves, then after $n$ probes he knows which of the $2^n$ preparations $\rho_\eta$ she made.
Two leaves that first separate at the node $s$ have responses to $\sigma_s$ that differ by at least $\delta$.

\begin{proposition}[Infinite ladders from instability]\label{prop:inst-ladder}
  If sequences $(\rho_k)_{k\in\NN}$ in $\fD(H_1)$ and
  $(\sigma_l)_{l\in\NN}$ in $\fD(H_2)$ are such that the iterated limits
  $L_1=\lim_k\lim_l\varphi_C(\rho_k,\sigma_l)$ and $L_2=\lim_l\lim_k\varphi_C(\rho_k,\sigma_l)$ exist and differ, then for every $\delta$ with $0<\delta<|L_1-L_2|$ there is an infinite $\delta$-ladder for $C$ whose states are taken from these sequences.
\end{proposition}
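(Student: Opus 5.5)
The plan is to reuse the ladder extraction of \cref{lem:ladder}, but with ordinary limits in place of ultralimits, and then convert its two-sided closeness into threshold inequalities. Since $C$ is self-adjoint, the values $a_{kl}=\varphi_C(\rho_k,\sigma_l)$ and the limits $L_1,L_2$ are real. Set $\gamma=\tfrac12(L_1+L_2)$ and $\theta=\operatorname{sign}(L_1-L_2)$, and choose $\varepsilon>0$ with $\tfrac12|L_1-L_2|-\varepsilon\ge\delta/2$. This is possible because $\delta<|L_1-L_2|$; it suffices to take $\varepsilon=\tfrac12(|L_1-L_2|-\delta)$.

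The next step is to build indices $k_1<k_2<\cdots$ and $l_1<l_2<\cdots$ alternately, as in the proof of \cref{lem:ladder}. Write $\alpha_k=\lim_l a_{kl}$ and $\beta_l=\lim_k a_{kl}$. Given the earlier indices, choose $k_m>k_{m-1}$ large enough that $|\alpha_{k_m}-L_1|\le\varepsilon/2$ and $|a_{k_m l_i}-\beta_{l_i}|\le\varepsilon/2$ for all $i<m$. Such $k_m$ exists because only finitely many convergence conditions are involved, each holding for all large $k$. Then choose $l_m>l_{m-1}$ large enough that $|\beta_{l_m}-L_2|\le\varepsilon/2$ and $|a_{k_i l_m}-\alpha_{k_i}|\le\varepsilon/2$ for all $i\le m$. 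The same triangle inequalities as in \cref{lem:ladder} give $|a_{k_il_m}-L_1|\le\varepsilon$ for $i\le m$ and $|a_{k_il_m}-L_2|\le\varepsilon$ for $i>m$. Alternatively, one may take any nonprincipal ultrafilters, note that the ultrafilter iterated limits equal $L_1$ and $L_2$ since the ordinary ones exist, and cite \cref{lem:ladder} directly.

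Finally, rename $\rho'_i=\rho_{k_i}$ and $\sigma'_m=\sigma_{l_m}$. For $i\le m$ we get $\theta(a_{k_il_m}-\gamma)\ge\tfrac12|L_1-L_2|-\varepsilon\ge\delta/2$. For $i>m$ we get $\theta(a_{k_il_m}-\gamma)\le-\tfrac12|L_1-L_2|+\varepsilon\le-\delta/2$. This is exactly an infinite $\delta$-ladder in the sense of \cref{def:trees}, and its states are taken from the given sequences.

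There is no real obstacle here. The only point needing care is the choice of $\varepsilon$ against the margin $\delta$: the inequality must be non-strict, which the bound $\delta<|L_1-L_2|$ allows.
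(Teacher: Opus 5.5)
Your proof is correct and is the paper's argument: the paper notes that the ultrafilter iterated limits equal the ordinary ones, applies \cref{lem:ladder} with $\varepsilon=(|L_1-L_2|-\delta)/2$, and takes $\gamma=(L_1+L_2)/2$ and $\theta$ the sign of $L_1-L_2$, exactly as you do. Your alternating construction with ordinary limits is the proof of \cref{lem:ladder} rewritten without ultrafilters, so it adds nothing that needs further justification.
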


\begin{proof}
  For any nonprincipal ultrafilters $\cU$ and $\cV$ on $\NN$, the iterated limits along $\cU$ and $\cV$ equal the ordinary iterated limits $L_1$ and $L_2$.
  Apply \cref{lem:ladder} with $\varepsilon=(|L_1-L_2|-\delta)/2$, and take $\gamma=(L_1+L_2)/2$ and $\theta$ the sign of $L_1-L_2$.
\end{proof}

The passage from ladders to trees rests on a single arithmetic observation, which we prove in \cref{prop:ladders-to-trees} below:
if each finite binary sequence is placed at the point of
$(0,1)$ whose binary expansion consists of its digits complemented and followed by a final
$1$, then each leaf lies to the left of every node at which its branch takes the label $1$, and to the right of every node at which it takes the label $0$ (see \cref{fig:ladder-tree}).

\begin{proposition}[Ladders give trees]\label{prop:ladders-to-trees}
  For every $\delta>0$ and every integer $n\ge0$, every $\delta$-ladder of length $2^{n+1}-1$ for $C$ yields a $\delta$-tree of height $n$ for $C$, with the same threshold $\gamma$ and sign $\theta$, and with leaves and nodes taken from the ladder.
\end{proposition}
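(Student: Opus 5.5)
The plan is the dyadic embedding described just before the statement. I index the ladder by the $2^{n+1}-1$ dyadic rationals in $(0,1)$ with denominator $2^{n+1}$, listed in increasing order. For a binary sequence $s$ of length $m\le n$, write $\bar s$ for its complement. Set
\[
  x(s)=0.\bar s_1\bar s_2\cdots\bar s_m1
\]
in binary, a dyadic rational with denominator $2^{m+1}\le2^{n+1}$. The map $s\mapsto x(s)$ is injective on $\{0,1\}^{\le n}$: the position of the last $1$ recovers $m$, and then the preceding digits recover $s$. Its image is exactly the set of $2^{n+1}-1$ dyadics $j/2^{n+1}$ with $1\le j<2^{n+1}$. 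So $x$ is a bijection between all sequences of length at most $n$ and the ladder positions $1,\dots,2^{n+1}-1$, under the order-preserving identification of $j/2^{n+1}$ with position $j$.

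Next I choose the tree. For a leaf $\eta\in\{0,1\}^n$, let $\rho_\eta$ be the ladder state $\rho_{j}$ with $j/2^{n+1}=x(\eta)$. For a node $s\in\{0,1\}^{<n}$, let $\sigma_s$ be the ladder probe $\sigma_{j}$ with $j/2^{n+1}=x(s)$. Leaves and nodes occupy disjoint positions, so no index is used for both a leaf and a node. Each leaf uses only its $\rho$ and each node only its $\sigma$, so the injectivity of $x$ is all that is needed. I keep the same $\gamma$ and $\theta$.

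The key step is the arithmetic claim: if $\cat{s}{\iota}\seg\eta$, then $x(\eta)<x(s)$ when $\iota=1$, and $x(\eta)>x(s)$ when $\iota=0$. To see this, let $m=|s|$. The numbers $x(s)$ and $x(\eta)$ agree in their first $m$ binary digits, namely $\bar s$. Their digit at position $m+1$ is $1$ for $x(s)$ and $\bar\iota$ for $x(\eta)$, and $x(s)$ has no further digits.
\begin{itemize}
\item If $\iota=1$, then $x(\eta)$ has $0$ at position $m+1$, so $x(\eta)$ is less than the common prefix plus $2^{-(m+1)}$, which equals $x(s)$.
\item If $\iota=0$, then $x(\eta)$ has $1$ at position $m+1$, followed by further digits ending in a final $1$. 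These are nonzero, so $x(\eta)>x(s)$ strictly.
\end{itemize}

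Finally I translate this into the ladder inequalities. Write $i$ for the ladder index of $\rho_\eta$ and $j$ for that of $\sigma_s$. Then $\iota=1$ gives $i<j$, so $\theta(\varphi_C(\rho_\eta,\sigma_s)-\gamma)\ge\delta/2$. Also $\iota=0$ gives $i>j$, so the value is at most $-\delta/2$. These are exactly the tree conditions of \cref{def:trees}.

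The only delicate point is the bookkeeping that the image of $x$ is exactly $\{1,\dots,2^{n+1}-1\}$, or at least lies inside it; lying inside suffices, since only injectivity and order are used. The degenerate case $n=0$, a single leaf with no nodes, is trivial and uses one ladder entry.
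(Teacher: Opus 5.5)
Your proof is correct and is essentially the paper's argument: your $x(s)$ is exactly the paper's $\pos(s)$, your identification of $j/2^{n+1}$ with ladder position $j$ is the increasing bijection $\pi$ made explicit (the image of $x$ is all of $\{j/2^{n+1}:1\le j<2^{n+1}\}$), and your digit comparison is the paper's decomposition $\pos(\eta)-\pos(s)=-\iota\,2^{-(k+1)}+\Delta$ with $0<\Delta<2^{-(k+1)}$. The one step you leave implicit is strictness when $\iota=1$. It holds because the expansion of $x(\eta)$ stops at position $n+1$, so its digits after position $m+1$ contribute at most $2^{-(m+1)}-2^{-(n+1)}<2^{-(m+1)}$.
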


\begin{proof}
  For a $0$--$1$ sequence $s$ of length $|s|\le n$, with digits $s(0),\dots,s(|s|-1)$, let
  \[
    \pos(s)=\sum_{k<|s|}\bigl(1-s(k)\bigr)2^{-(k+1)}+2^{-(|s|+1)}\in(0,1)
  \]
  The binary expansion of $\pos(s)$ is $1-s(0),\dots,1-s(|s|-1),1$, so the map $\pos$ is injective, and its image has $2^{n+1}-1$ elements.
  We claim that if $\cat{s}{\iota}\seg\eta$ with $\eta\in\{0,1\}^n$, then the leaf $\eta$ lies to the left of $s$, that is, $\pos(\eta)<\pos(s)$, if and only if $\iota=1$.
  Indeed, with $k=|s|$, we have
  \[
    \pos(\eta)-\pos(s)=-\iota\,2^{-(k+1)}+\Delta,\qquad
    \Delta=\sum_{k<j<n}\bigl(1-\eta(j)\bigr)2^{-(j+1)}+2^{-(n+1)},
  \]
  and $0<\Delta\le(2^{-(k+1)}-2^{-n})+2^{-(n+1)}<2^{-(k+1)}$.
  The bound $0<\Delta<2^{-(k+1)}$ is the point of the construction:
  the digits after position $k$ can move $\pos(\eta)$ by less than the digit at position $k$, so the turn taken at $s$ alone decides on which side of $\pos(s)$ the leaf lies.

  Let $\rho_1,\dots,\rho_N$ and $\sigma_1,\dots,\sigma_N$, with $N=2^{n+1}-1$, be the states of the ladder, and let $\pi$ be the increasing bijection from the image of $\pos$ onto $\{1,\dots,N\}$.
  The leaves and nodes of the tree are $\rho_\eta=\rho_{\pi(\pos(\eta))}$, for $\eta\in\{0,1\}^n$, and $\sigma_s=\sigma_{\pi(\pos(s))}$, for $s\in\{0,1\}^{<n}$.
  If $\cat{s}{\iota}\seg\eta$ and $\iota=1$, then $\pi(\pos(\eta))<\pi(\pos(s))$, and the ladder yields $\theta(\varphi_C(\rho_\eta,\sigma_s)-\gamma)\ge\delta/2$;
  if $\iota=0$, then $\pi(\pos(\eta))>\pi(\pos(s))$, and the opposite inequality follows.
\end{proof}

\begin{figure}[ht]
  \centering
  \begin{tikzpicture}[x=1.35cm,y=1.1cm,
      node/.style={circle,draw=deepblue,fill=palepurple,inner sep=1.6pt},
      leaf/.style={circle,draw=deepblue,fill=deepblue,inner sep=1.6pt},
      every label/.style={font=\scriptsize}]
    \foreach \i in {1,...,7} {\node[font=\scriptsize,text=black!60] at (\i,-0.55) {\i};}
    \draw[black!35] (0.6,-0.25)--(7.4,-0.25);
    \node[node,label=below:{$\sigma_\emptyset$}] (r) at (4,0.4) {};
    \node[node,label=left:{$\sigma_{1}$}] (s1) at (2,1.4) {};
    \node[node,label=right:{$\sigma_{0}$}] (s0) at (6,1.4) {};
    \node[leaf,label=above:{$\rho_{11}$}] (l11) at (1,2.5) {};
    \node[leaf,label=above:{$\rho_{10}$}] (l10) at (3,2.5) {};
    \node[leaf,label=above:{$\rho_{01}$}] (l01) at (5,2.5) {};
    \node[leaf,label=above:{$\rho_{00}$}] (l00) at (7,2.5) {};
    \draw[deepblue,thick] (r)--(s1) node[midway,above left,font=\scriptsize]{$1$};
    \draw[deepblue,thick] (r)--(s0) node[midway,above right,font=\scriptsize]{$0$};
    \draw[deepblue] (s1)--(l11); \draw[deepblue] (s1)--(l10);
    \draw[deepblue] (s0)--(l01); \draw[deepblue] (s0)--(l00);
    \foreach \x in {1,...,7} {\draw[black!25,densely dotted] (\x,-0.25)--(\x,2.5);}
  \end{tikzpicture}
  \caption{The map $\pos$ for $n=2$, followed by the increasing bijection $\pi$ of its image onto $\{1,\dots,7\}$, places the seven sequences of length at most $2$ at the positions $1,\dots,7$ of a ladder of length $7$;
the nodes $\sigma_s$ are drawn as light circles, the leaves $\rho_\eta$ as dark ones, and the tree grows upward.}
  \label{fig:ladder-tree}
\end{figure}

\begin{keytheorem}[Unstable observables encode trees]\label{thm:trees}
  If $C$ is unstable, then there is $\delta>0$ such that for every $n\in\NN$, there is a $\delta$-tree of height $n$ for $C$.
  More precisely, if sequences $(\rho_k)_{k\in\NN}$ in
  $\fD(H_1)$ and $(\sigma_l)_{l\in\NN}$ in $\fD(H_2)$
  are such that the two iterated limits of $\varphi_C(\rho_k,\sigma_l)$
  exist and differ by $\Delta>0$, then for every $\delta$ with $0<\delta<\Delta$ and every $n\in\NN$, there is a $\delta$-tree of height $n$ for $C$ whose leaves and nodes are terms of these sequences.
\end{keytheorem}

\begin{proof}
  Combine \cref{prop:inst-ladder,prop:ladders-to-trees}.
\end{proof}

In particular, the leaves and nodes are pure states when the terms of the two sequences are pure.

For the order projection, built from two orthonormal sequences
$(e_k)_{k\in\NN}$ and $(f_l)_{l\in\NN}$, and for the comparison
$[A<B]$ of two self-adjoint operators $A$ and $B$ with pure point spectra that are unbounded above, the values of the expectation on the vectors
$e_k$ and $f_l$ and on eigenvectors of $A$ and $B$, respectively, form $1$-ladders of every length (see \cref{thm:unstable} and \cref{prop:comparison}(3));
hence, by \cref{prop:ladders-to-trees}, there are $1$-trees of every height whose leaves and nodes are the pure states given by these vectors.
The converse of \cref{thm:trees} fails (see \cref{exa:sector} below):
trees of unbounded height detect a uniform form of instability, which we now describe through ultrapowers.

%#
%### SUBSECTION: Ultrapowers and uniform stability
%#
\subsection{Ultrapowers and uniform stability}\label{sec:ultra}
Uniform stability bounds the finite ladders at every positive margin.

\begin{definition}\label{def:uniform}
  A self-adjoint observable $C$ is \emph{uniformly stable} if for every $\delta>0$, the lengths of $\delta$-ladders for $C$ are bounded.
  An arbitrary bounded operator $C$ is \emph{uniformly stable} if its hermitian parts $\frac12(C+C^*)$ and $\frac1{2i}(C-C^*)$ are uniformly stable.
\end{definition}

To describe uniform stability through ultrapowers, fix a nonprincipal ultrafilter $\cU$ on $\NN$.
A sequence $x=(x_k)_{k\in\NN}$ of states of Alice’s system represents an element of the ultrapower of her preparations;
it should be thought of as a family of experiments indexed by $k$, in which both parties vary their preparations together.
The \emph{ultrapower expectation} of $C$ is the function
\[
  \Phi^\cU_C(x,y)=\ulimit{k}{\cU}\varphi_C(x_k,y_k)
  \qquad\bigl(x\in\fD(H_1)^\NN,\ y\in\fD(H_2)^\NN\bigr).
\]
It is a joint limit, not an iterated one, and so it differs in kind from the two products of idealized states in \cref{sec:setting};
its value at $(x,y)$ is that, at the classes of $x$
and $y$, of the bilinear form induced by $m_C(\tau,\tau')=\Tr\bigl((\tau\otimes\tau')C\bigr)$ on the Banach-space ultrapowers of the spaces $\cT(H_1)$ and $\cT(H_2)$ of trace-class operators~\cite{Heinrich:1980b}.
Ladders and trees for $\Phi^\cU_C$ are defined as in \cref{def:trees}, with sequences in $\fD(H_1)^\NN$ and $\fD(H_2)^\NN$ in place of states.
For $\delta>0$, an \emph{infinite $\delta$-tree} for
$\Phi^\cU_C$ consists of leaves $x_\eta\in\fD(H_1)^\NN$
for every infinite branch $\eta\in\{0,1\}^\NN$, nodes $y_s\in\fD(H_2)^\NN$
for $s\in\{0,1\}^{<\NN}$, a real number $\gamma$ and a sign $\theta$, satisfying the inequalities of \cref{def:trees}(b), with $\Phi^\cU_C$ in place of $\varphi_C$, whenever $\cat{s}{\iota}\seg\eta$.

\begin{keytheorem}[Uniform stability]\label{thm:uniform}
  For every self-adjoint $C\in B(H_1\otimes H_2)$, the following are equivalent:
  \begin{enumerate}[label=\textup{(\arabic*)}]
  \item
    The observable $C$ is uniformly stable.
  \item
    For every $\delta>0$, the heights of $\delta$-trees for $C$ are bounded.
  \item
    The ultrapower expectation $\Phi^\cU_C$ is stable for every nonprincipal ultrafilter $\cU$ on $\NN$.
  \item
    The ultrapower expectation $\Phi^\cU_C$ is stable for some nonprincipal ultrafilter $\cU$ on $\NN$.
  \end{enumerate}
\end{keytheorem}

The proof shows two further facts:
every uniformly stable observable is stable;
and if the conditions of \cref{thm:uniform} fail, then there is $\delta>0$ such that for every nonprincipal ultrafilter $\cU$ on $\NN$, there is an infinite $\delta$-tree for $\Phi^\cU_C$.

\begin{proof}
  The implication from (2) to (1) is \cref{prop:ladders-to-trees}.

  \emph{Proof that (1) implies (3).}
  Suppose that $\Phi^\cU_C$ is unstable.
  The proof of \cref{prop:inst-ladder} uses only \cref{lem:ladder}, which holds for any bounded function on a product of sets, so it provides, for some
  $\delta>0$, an infinite $\delta$-ladder for $\Phi^\cU_C$, consisting of sequences $x_i\in\fD(H_1)^\NN$ and $y_j\in\fD(H_2)^\NN$ for $i,j\in\NN$, a threshold $\gamma$ and a sign $\theta$;
  we write $x_{i,k}$ and $y_{j,k}$ for the $k$-th entries of $x_i$ and $y_j$.
  Fix a positive integer $N$.
  For each of the $N^2$ pairs $1\le i,j\le N$, the set of $k$ with $\theta(\varphi_C(x_{i,k},y_{j,k})-\gamma)\ge\delta/4$ (if $i\le j$), respectively $\le-\delta/4$ (if $i>j$), belongs to $\cU$.
  For $k$ in the intersection of these sets, the states $x_{i,k}$ and $y_{j,k}$ form a $\delta/2$-ladder of length $N$ for $C$.
  Hence the observable $C$ is not uniformly stable.

  The implication from (3) to (4) is trivial.

  \emph{Proof that (4) implies (2).}
  The idea is to glue trees of increasing height into an infinite tree in the ultrapower, and to observe that an infinite tree has $2^{\aleph_0}$ separated leaves, which no separable dual can accommodate.
  Suppose that for some $\delta>0$ there are $\delta$-trees $(\rho^{(n)}_\eta,\sigma^{(n)}_s,\gamma_n,\theta_n)$ of every height $n$, and let $\cU$ be any nonprincipal ultrafilter on $\NN$.
  Pass to a set in $\cU$ on which $\theta_n=\theta$ is constant, and let $\gamma=\ulimit{n}{\cU}\gamma_n$, which exists because, for $n\ge1$, the root of the $n$-th tree has leaves on both sides of $\gamma_n$, whence $|\gamma_n|\le\|C\|$.
  For an infinite branch $\eta\in\{0,1\}^\NN$, let $x_\eta$ be the sequence $(\rho^{(n)}_{\eta|n})_n$, where $\eta|n$ is the initial segment of length $n$, and for $s\in\{0,1\}^{<\NN}$, let $y_s$ be the sequence $(\sigma^{(n)}_s)_n$, with arbitrary entries for $n\le|s|$.
  If $\cat{s}{\iota}\seg\eta$, then the same holds with $\eta|n$ in place of $\eta$ for all $n>|s|$, and passing to the limit, we obtain $\theta(\Phi^\cU_C(x_\eta,y_s)-\gamma)\ge\delta/2$ if $\iota=1$ and $\le-\delta/2$ if $\iota=0$.
  This is an infinite $\delta$-tree for $\Phi^\cU_C$, which proves the assertion on infinite $\delta$-trees made after the theorem.

  Now take for $\cU$ the ultrafilter of~(4), for which $\Phi^\cU_C$ is stable.
  After an affine rescaling to values in $[0,1]$, which multiplies gaps by
  $c=1/(2\|C\|)$, the theorem of Iovino~\cite[Theorem~3.3]{Iovino:1999c} yields a reflexive Banach space
  $E$ and maps $u\colon\fD(H_1)^\NN\to E$ and $v\colon\fD(H_2)^\NN\to E^*$, with values in the unit balls, such that $c(\Phi^\cU_C+\|C\|)=\langle u(\cdot),v(\cdot)\rangle$.
  Let $Y$ be the closed span in $E^*$ of the countably many vectors $v(y_s)$.
  The space $Y$ is separable and reflexive, so its dual $Y^*$ is separable.
  If $\eta\ne\eta'$ first differ after the segment $s$, then
  $|\langle u(x_\eta)-u(x_{\eta'}),v(y_s)\rangle|\ge c\delta$, so the restrictions of the vectors $u(x_\eta)$ to $Y$ are $2^{\aleph_0}$ points of $Y^*$ at mutual distance at least $c\delta$, which is impossible in a separable space.
  This contradiction shows that the heights of $\delta$-trees for $C$ are bounded for every $\delta>0$, which is~(2).

  Finally, an unstable observable has infinite ladders by \cref{prop:inst-ladder}, so it is not uniformly stable.
\end{proof}

The same equivalences hold for families of observables, with one exception.
Conditions~(1), (2) and~(3) of \cref{thm:uniform}, but not condition~(4),
remain equivalent for a bounded family $(C_k)_{k\in\NN}$
of self-adjoint observables $C_k$ on finite-dimensional spaces
$H_1^{(k)}\otimes H_2^{(k)}$, with $\Phi^\cU(x,y)=\ulimit{k}{\cU}\Tr((x_k\otimes y_k)C_k)$
for sequences $x=(x_k)_{k\in\NN}$ and $y=(y_k)_{k\in\NN}$
with $x_k\in\fD(H_1^{(k)})$ and $y_k\in\fD(H_2^{(k)})$, when the ladders and trees of the family are those of its members and their lengths and heights are required to be bounded independently of~$k$.
The finite-intersection argument above still transfers every finite ladder from an ultrapower to a member of the family.
Conversely, suppose that the tree heights are unbounded at some fixed positive margin.
Each finite-dimensional member belongs to its minimal tensor product and therefore has bounded tree heights by \cref{prop:uniform-examples} below.
We may thus choose distinct indices $k_n$, for positive integers $n$, such that $C_{k_n}$ has a tree of height at least $n$, and truncate the tree to height $n$.
Choose a nonprincipal ultrafilter $\cU$ containing $\{k_n:n\ge1\}$.
At coordinate $k_n$, use the leaves and probes of the height-$n$ tree in the construction above, and fill all other coordinates with arbitrary states.
The uniform bound on $\|C_k\|$ bounds the thresholds, so taking their ultralimit and fixing the sign on a set in $\cU$ gives an infinite tree.
Its ultrapower expectation is unstable by the separation argument above.
Condition~(4) alone does not suffice:
if $C_k=0$ for every odd~$k$, then $\Phi^\cU=0$ for every $\cU$ that contains the set of odd numbers, even when the members of even index have unbounded ladder lengths at a fixed positive margin.
Uniform stability of a family, with lengths and heights bounded independently of~$k$, is the relevant notion for registers of growing size (see \cref{sec:outlook}).

\begin{proposition}\label{prop:uniform-examples}
  Every self-adjoint $C$ in the minimal tensor product
  $B(H_1)\otimes_{\min}B(H_2)$ is uniformly stable, and so is every self-adjoint
  $C\in B(H_1\otimes H_2)$ for which there are a Hilbert space $\cH$ and bounded maps $u\colon\fD(H_1)\to\cH$ and $v\colon\fD(H_2)\to\cH$ with $\varphi_C(\rho,\sigma)=\langle u(\rho),v(\sigma)\rangle$ for all $\rho\in\fD(H_1)$ and $\sigma\in\fD(H_2)$.
  In particular, the swap operator $F$, the projections $P_\pm$ and the equality tests $[A=B]$ of \cref{sec:physics} are uniformly stable.
\end{proposition}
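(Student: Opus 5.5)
We prove the proposition through condition~(3) of \cref{thm:uniform}: we show that the ultrapower expectation $\Phi^\cU_C$ is stable for a nonprincipal ultrafilter $\cU$ on $\NN$. The key observation is that the two classes of observables in the statement are exactly those for which stability is witnessed by a structure that survives the passage to ultrapowers. For a Hilbert factorization this structure is the factorization itself. For the minimal tensor product it is uniform approximation by separated sums.

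\emph{Hilbert-factorable observables.} Suppose that $\varphi_C(\rho,\sigma)=\langle u(\rho),v(\sigma)\rangle$ with $\|u\|,\|v\|\le R$. For $x\in\fD(H_1)^\NN$ and $y\in\fD(H_2)^\NN$, let $\hat u(x)$ and $\hat v(y)$ be the classes of $(u(x_k))_k$ and $(v(y_k))_k$ in the Hilbert-space ultrapower $\cH^\cU$. This ultrapower is again a Hilbert space, because the parallelogram law passes to ultralimits. The inner product on $\cH^\cU$ is the ultralimit of the coordinatewise inner products, so
\[
  \Phi^\cU_C(x,y)=\ulimit{k}{\cU}\langle u(x_k),v(y_k)\rangle=\langle\hat u(x),\hat v(y)\rangle .
\]
The maps $\hat u$ and $\hat v$ are bounded by $R$. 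By \cref{prop:hilbertfactor}, applied to the sets $\fD(H_1)^\NN$ and $\fD(H_2)^\NN$, the function $\Phi^\cU_C$ is stable. \Cref{thm:uniform} then gives uniform stability.

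\emph{The minimal tensor product.} First let $C'=\sum_{j=1}^rA_j\otimes B_j$ be a finite sum of products. Then
\[
  \Phi^\cU_{C'}(x,y)=\sum_j c_j(x)\,d_j(y),
\]
where $c_j(x)=\ulimit{k}{\cU}\Tr(x_kA_j)$ and $d_j(y)=\ulimit{k}{\cU}\Tr(y_kB_j)$. Here we use that the ultralimit of a finite sum of products of bounded sequences is the sum of the products of the ultralimits. This is a separated sum, so it is stable by \cref{lem:preservation}(a).

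For a general $C$ in the minimal tensor product, we have $|\Phi^\cU_C-\Phi^\cU_{C'}|\le\|C-C'\|$ pointwise. Since $C$ is a norm limit of such sums $C'$, the function $\Phi^\cU_C$ is a uniform limit of stable functions and is stable by \cref{lem:preservation}(b). The approximants $C'$ need not be self-adjoint. This causes no difficulty, because \cref{lem:preservation} is stated for complex-valued functions, and condition~(3) of \cref{thm:uniform} concerns only $\Phi^\cU_C$ for the self-adjoint $C$ itself.

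\emph{The examples.} The swap operator $F$ and the projections $P_\pm$ are Hilbert-factorable through $S_2(K)$. The constant term $\tfrac12$ in $\varphi_{P_\pm}$ can be absorbed by adjoining one coordinate to the Hilbert space, or handled by \cref{lem:preservation}(a). The equality tests $[A=B]$ factor through $\ell^2$, as in \cref{prop:comparison}(1).

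The step I expect to need the most care is the identification of $\Phi^\cU_C$ with the inner product on $\cH^\cU$, together with the check that the ultrapower of a Hilbert space is a Hilbert space. Both facts are routine. An alternative route avoids ultrapowers for Hilbert factorizations: bound $\delta$-ladders directly by a perceptron-type argument. The paper defers that argument to \cref{thm:perceptron}, so the ultrapower route is the cleaner choice here.
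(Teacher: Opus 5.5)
Your proof is correct and follows the paper's argument essentially verbatim. For the minimal tensor product you show $\Phi^\cU_C$ is stable via separated sums and \cref{lem:preservation}(a) and~(b); in the Hilbert case you use the factorization through the ultrapower Hilbert space $\cH^\cU$ together with \cref{prop:hilbertfactor}; and you conclude with \cref{thm:uniform}, with only your explicit treatment of the constant term in $\varphi_{P_\pm}$ going beyond what the paper spells out.
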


\begin{proof}
  For a finite sum $C'=\sum_{j=1}^rA_j\otimes B_j$ of products, with
  $A_j\in B(H_1)$ and $B_j\in B(H_2)$, the function $\Phi^\cU_{C'}(x,y)=\sum_j\ulimit{k}{\cU}\Tr(x_kA_j)\cdot\ulimit{k}{\cU}\Tr(y_kB_j)$ is a separated sum, so it is stable by \cref{lem:preservation}(a).
  Since $|\Phi^\cU_C(x,y)-\Phi^\cU_{C'}(x,y)|\le\|C-C'\|$ for every $x,y$, norm approximation of $C$ by such finite sums and \cref{lem:preservation}(b) show that $\Phi^\cU_C$ is stable.
  In the Hilbert case, we have $\Phi^\cU_C(x,y)=\langle(u(x_k))_\cU,(v(y_k))_\cU\rangle$
  in the ultrapower Hilbert space $\cH^\cU$, where $(u(x_k))_\cU$ denotes the element of $\cH^\cU$ represented by the sequence $(u(x_k))_{k\in\NN}$, and \cref{prop:hilbertfactor} applies.
  Conclude with \cref{thm:uniform}.
\end{proof}

\begin{example}[Stable but not uniformly stable]\label{exa:sector}
  Let both $H_1$ and $H_2$ be the Hilbert space $\bigoplus_{n\ge1}\CC^n$, with orthonormal bases $(e^n_i)_{i\le n}$ of the summands, which we call sectors;
  for instance, for $j\in\{0,\frac12,1,\frac32,\dots\}$, the summand
  $\CC^{2j+1}$ carries the spin-$j$ representation, the irreducible representation of dimension $2j+1$ of the group $\mathrm{SU}(2)$ that implements rotations in quantum mechanics, with basis vectors $|j,m\rangle$, for $m=-j,-j+1,\dots,j$, ordered by $m$.
  Consider the projection
  \[
    C_{\mathrm{sec}}=\sum_{n\ge1}\ \sum_{1\le i\le l\le n}|e^n_i\rangle\langle e^n_i|\otimes|e^n_l\rangle\langle e^n_l| ,
  \]
  onto “both systems lie in the same sector, and Alice’s level does not exceed Bob’s”.
  In the sector $\CC^n$, the states $e^n_1,\dots,e^n_n$ form a $1$-ladder of length $n$, so the projection $C_{\mathrm{sec}}$ is not uniformly stable and has $1$-trees of every finite height.
  The projection $C_{\mathrm{sec}}$ is, however, stable:
  for a density operator $\rho$ on $H_1$, the effective observable $T_{C_{\mathrm{sec}}}(\rho)$ that it leaves on Bob’s side depends only on the diagonal entries $\rho^n_{ii}=\langle e^n_i|\rho|e^n_i\rangle$, and $T_{C_{\mathrm{sec}}}=J\circ a$ with
  \[
    a\colon\cT(H_1)\to\Bigl(\bigoplus_n\ell^1_n\Bigr)_{\ell^2},\quad
    a(\tau)=\bigl((\tau^n_{ii})_{i\le n}\bigr)_n,
    \qquad
    J\bigl((w_n)_n\bigr)=\bigoplus_n\sum_{l\le n}\Bigl(\sum_{i\le l}w_{n,i}\Bigr)|e^n_l\rangle\langle e^n_l| ,
  \]
  where $\ell^1_n$ is $\CC^n$ with the norm $\|w\|_1=\sum_i|w_i|$, and $(\bigoplus_n\ell^1_n)_{\ell^2}$ is the space of sequences $(w_n)_n$ with $w_n=(w_{n,i})_{i\le n}$ in $\ell^1_n$ and $\sum_n\|w_n\|_1^2<\infty$, with the norm $(\sum_n\|w_n\|_1^2)^{1/2}$.
  Here, for $\tau\in\cT(H_1)$ with diagonal entries
  $\tau^n_{ii}=\langle e^n_i|\tau|e^n_i\rangle$, we have
  $\|a(\tau)\|\le\sum_n\sum_i|\tau^n_{ii}|\le\|\tau\|_1$, and $\|J\|\le1$, and the $\ell^2$-sum of finite-dimensional spaces is reflexive, so \cref{thm:weakcompact}(4), the factorization of the slice map through a reflexive space, applies.
\end{example}

The space $(\bigoplus_n\ell^1_n)_{\ell^2}$ is reflexive but not superreflexive.
Here a Banach space is \emph{superreflexive} if every Banach space finitely representable in it, one whose finite-dimensional subspaces are almost isometric to subspaces of it, is reflexive;
equivalently, all its ultrapowers are reflexive~\cite{Heinrich:1980b}.
Superreflexivity is relevant to uniform stability:
if the slice map $T_C$ of an observable $C$ factors through a superreflexive space $E$, then $E^\cU$ is reflexive for every nonprincipal ultrafilter $\cU$~\cite{Heinrich:1980b}.
The ultrapower expectation factors through the pairing of $E^\cU$ with $(E^*)^\cU$, canonically embedded in $(E^\cU)^*$.
It is therefore stable by the argument of \cref{sec:geometry}, so $C$ is uniformly stable by \cref{thm:uniform}.
Whether the converse holds is \cref{ques:superreflexive}.

The following table collects the examples of \cref{sec:physics} and the sector comparison;
the entries in the column on uniform stability are justified by \cref{thm:uniform}, \cref{prop:uniform-examples} and \cref{exa:sector}.
Uniform stability corresponds to what is called superstability in Banach space theory, where a space is superstable when all its ultrapowers are stable~\cite{Raynaud:1983a};
we prefer the name uniform stability because superstability has a different meaning in model theory~\cite{Shelah:1971b}.

\begin{center}
  \renewcommand{\arraystretch}{1.25}
  \begin{tabular}{p{5.0cm}p{1.3cm}p{1.9cm}p{4.4cm}}
    \hline
    Operator or observable                                             & Stable      & Uniformly stable & Mechanism                     \\
    \hline
    $A\otimes B$, $A\otimes I+I\otimes B$                              & yes         & yes              & minimal tensor product        \\
    compact, Bell projections                                          & yes         & yes              & minimal tensor product        \\
    any $C$, one factor finite-dimensional                             & yes         & yes              & finite sum of products        \\
    swap $F$; $P_+$ (swap test); $P_-$ (Hong--Ou--Mandel)                           & yes         & yes              & factors through $S_2(K)$      \\
    $[A=B]$, pure point spectra                                        & yes         & yes              & factors through $\ell^2$      \\
    $[A<B]$, one spectrum finite                                       & yes         & yes              & finite sum of products        \\
    sector comparison $C_{\mathrm{sec}}$ (see \cref{exa:sector} below) & yes         & \textbf{no}      & reflexive, not superreflexive \\
    $[A<B]$, point spectra unbounded above                             & \textbf{no} & \textbf{no}      & encodes the order of $\NN$    \\
    order projection $P$                                         & \textbf{no} & \textbf{no}      & encodes the order of $\NN$    \\
    \hline
  \end{tabular}
\end{center}

%#
%### SUBSECTION: Counting the preparations resolved by adaptive probes
%#
\subsection{Counting the preparations resolved by adaptive probes}\label{sec:growth}
For every positive integer $m$, a complete decision tree of height $m$ realizes all $2^m$ binary transcripts of thresholded expectation responses.
We now count the transcripts when the probe tree need not have every branch realized.
At each fixed positive margin, bounded decision-tree height yields a polynomial bound, while unbounded height yields all $2^m$ transcripts for every positive integer $m$.

Fix $\delta>0$, and for the moment also a threshold $\gamma$ and a sign $\theta$.
For an integer $m\ge0$, a \emph{probe tree of depth $m$} is a family $(\sigma_s)_{s\in\{0,1\}^{<m}}$ of states of Bob’s system.
A branch $\eta\in\{0,1\}^m$ is \emph{realized} by $\rho\in\fD(H_1)$ if, whenever $\cat{s}{\iota}\seg\eta$, the inequalities of \cref{def:trees}(b) hold for $\varphi_C(\rho,\sigma_s)$.
Realized branches are the outcomes that the adaptive protocol can return with margin $\delta$, and two preparations that realize different branches are told apart by $m$ probes.
For a set $\mathcal{R}\subseteq\fD(H_1)$ of preparations, let $d(\mathcal{R})\in\{-1,0,1,\dots,\infty\}$ be the supremum of the heights of the $\delta$-trees with these $\gamma$ and $\theta$ whose leaves lie in $\mathcal{R}$, with $d(\emptyset)=-1$.
The \emph{growth function} of $C$ is
\[
  \gr_C(m,\delta)=\sup\#\{\eta\in\{0,1\}^m:\eta\text{ is realized by some }\rho\in\fD(H_1)\},
\]
the supremum being over $\gamma$, $\theta$ and probe trees of depth $m$, and the \emph{tree dimension} $d_C(\delta)=\sup_{\gamma,\theta}d(\fD(H_1))$ is the supremum of the heights of the $\delta$-trees for $C$.
We write $\binom{m}{\le d}=\sum_{i=0}^d\binom mi$.

\begin{keytheorem}[Polynomial versus exponential growth]\label{thm:growth}
  Let $C\in B(H_1\otimes H_2)$ be self-adjoint, and let $\delta>0$ be given.
  \begin{enumerate}[label=\textup{(\arabic*)}]
  \item
    If the tree dimension $d=d_C(\delta)$ is finite, then for every integer $m\ge0$ we have
    \[
      \gr_C(m,\delta)\le\binom{m}{\le d},
    \]
    a polynomial in $m$ of degree $d$, and $\binom{m}{\le d}\le(\ee m/d)^d$ whenever $m\ge d\ge1$.
  \item
    If the tree dimension $d_C(\delta)$ is infinite, then the growth function satisfies $\gr_C(m,\delta)=2^m$ for every integer $m\ge0$.
  \end{enumerate}
  Consequently, the observable $C$ is uniformly stable if and only if $\gr_C(\cdot,\delta)$ grows polynomially for every $\delta>0$;
  otherwise, for some $\delta>0$, the growth function satisfies $\gr_C(m,\delta)=2^m$ for every integer $m\ge0$.
  In particular, unstable observables have exponential growth.
\end{keytheorem}

\begin{proof}
  Statement~(2) holds because a $\delta$-tree of height at least $m$, restricted to its nodes of length less than $m$ and one leaf above each branch of length $m$, realizes every branch of a probe tree of depth $m$.
  For~(1), the idea is that at each probe, at most one of the two halves of the remaining preparations can carry a tree as tall as the whole set.
  Fix $\gamma$ and $\theta$.
  We show by induction on $m$ that, for every probe tree of depth
  $m$ and every set $\mathcal{R}\subseteq\fD(H_1)$ with $d(\mathcal{R})<\infty$, the number of branches realized by elements of $\mathcal{R}$ is at most $\binom{m}{\le d(\mathcal{R})}$, with the convention that $\binom{m}{\le-1}=0$.
  For $m=0$ this is clear.
  The bound is immediate when $\mathcal{R}=\emptyset$, so in the induction step we may assume that $\mathcal{R}$ is nonempty and $0\le d(\mathcal{R})<\infty$.
  For $m\ge1$, let $\mathcal{R}_\iota$ be the set of elements of $\mathcal{R}$ that satisfy the inequality for $\iota$ at the root.
  The sets $\mathcal{R}_0$ and $\mathcal{R}_1$ are disjoint, because the two inequalities are separated by $\delta$.
  A branch $\cat{\iota}{\eta'}$ is realized in $\mathcal{R}$ exactly when $\eta'$ is realized in $\mathcal{R}_\iota$ for the subtree $(\sigma_{\cat{\iota}{s}})_s$ of depth $m-1$.
  We have $d(\mathcal{R}_\iota)\le d(\mathcal{R})$, and the two numbers $d(\mathcal{R}_0)$ and $d(\mathcal{R}_1)$ cannot both equal $d(\mathcal{R})$:
  two $\delta$-trees of height $d(\mathcal{R})$ with leaves in $\mathcal{R}_0$ and in $\mathcal{R}_1$, placed above the root probe $\sigma_\emptyset$, would form a $\delta$-tree of height $d(\mathcal{R})+1$ with leaves in $\mathcal{R}$.
  By induction and Pascal’s rule, the number of realized branches is at most $\binom{m-1}{\le d}+\binom{m-1}{\le d-1}=\binom{m}{\le d}$, with $d=d(\mathcal{R})$.
  The bound $\binom{m}{\le d}\le(\ee m/d)^d$ is standard.
  The last statements follow from \cref{thm:trees,thm:uniform}.
\end{proof}

\begin{remark}[Rowbottom’s definition of stability]\label{rem:rowbottom}
  Stability has many equivalent characterizations, but its original definition was a growth condition.
  Rowbottom~\cite{Rowbottom:1964} called a complete theory
  $T$ \emph{stable in} an infinite cardinal $\lambda$ if for every model $\cM$ of $T$ and every set $A\subseteq\cM$ of cardinality $\lambda$, the space $S_1(A)$ of complete $1$-types over $A$ has cardinality at most $\lambda$.
  The reverse inequality is trivial, since distinct elements of $A$ realize distinct types over $A$;
  hence stability in $\lambda$ says that $|S_1(A)|=|A|$, that is, the number of types grows linearly with the number of parameters, whereas a priori it can be as large as $2^\lambda$.
  For a single formula $\varphi(x,y)$, stability in
  $T$ is equivalent to the bound $|S_\varphi(A)|\le|A|$
  for every infinite set $A$ of parameters from a model of
  $T$, where $S_\varphi(A)$ is the space of complete $\varphi$-types over $A$, the complete descriptions of which instances $\varphi(x,a)$, with $a$ from $A$, an element satisfies (see \cite[Chapter~1]{Pillay:1996} and~\cite{Shelah:1990a}).
  The polynomial bound of \cref{thm:growth} is a finite and adaptive counterpart of this condition:
  for a uniformly stable observable, the number of preparations resolved by $m$ probes at a fixed margin is bounded by a polynomial in $m$.
  For an infinite cardinal $\lambda$ and an integer $d\ge1$, we have $\lambda^d=\lambda$, so on infinite parameter sets the distinction between linear and polynomial bounds disappears.
\end{remark}

In \cref{thm:perceptron} below, we show that for observables that factor through a Hilbert space, the degree of the polynomial is controlled explicitly, by a linear learner.

\begin{keytheorem}[Linear bound for Hilbert-factorable observables]\label{thm:perceptron}
  Let $C\in B(H_1\otimes H_2)$ be self-adjoint, and suppose that there are a Hilbert space
  $\cH$, positive real numbers $R_u$ and $R_v$, and maps
  $u\colon\fD(H_1)\to\cH$ and $v\colon\fD(H_2)\to\cH$ such that $\|u(\rho)\|\le R_u$, $\|v(\sigma)\|\le R_v$ and $\varphi_C(\rho,\sigma)=\langle u(\rho),v(\sigma)\rangle$ for all $\rho\in\fD(H_1)$ and $\sigma\in\fD(H_2)$.
  Then for every $\delta>0$, we have
  \[
    d_C(\delta)\le16\Bigl(\frac{R_uR_v}{\delta}\Bigr)^{2},
  \]
  independently of the dimensions of $H_1$ and $H_2$.
  In particular, the tree dimension is at most $16/\delta^2$ for the swap operator $F$ and for the equality tests $[A=B]$ of \cref{sec:physics}, and at most $4/\delta^2$ for the swap test $P_+$ and the Hong--Ou--Mandel projection $P_-$.
\end{keytheorem}

\begin{proof}
  The perceptron updates a linear classifier by adding each misclassified example multiplied by its label $\pm1$.
  We run it along the tree, always moving to the child that contradicts its prediction.
  The leaf reached at the end is a linear separator with margin $\delta/2$, against which the perceptron makes few mistakes.
  Regard the Hilbert space as a real inner product space with $\langle\cdot,\cdot\rangle_{\RR}=\operatorname{Re}\langle\cdot,\cdot\rangle$;
  since $\varphi_C$ is real, we have $\varphi_C=\langle u,v\rangle_{\RR}$.
  Let a $\delta$-tree of height $m$ be given, with threshold $\gamma$ and sign $\theta$.
  We may assume that $m\ge1$;
  the tree then requires values of $\varphi_C$ on both sides of $\gamma$, which forces $|\gamma|\le R_uR_v$.
  Define $\hat{u}(\rho)=(u(\rho),R_u)$ and $\hat{v}(\sigma)=(v(\sigma),-\gamma/R_u)$ in $\cH\oplus\RR$;
  then $\langle\hat{u}(\rho),\hat{v}(\sigma)\rangle_{\RR}=\varphi_C(\rho,\sigma)-\gamma$, $\|\hat{u}\|\le\sqrt2R_u$ and $\|\hat{v}\|\le\sqrt2R_v$.
  Start with $z_0=0$ at the root $s_0=\emptyset$.
  At the node $s_i$ reached at step $i$, for $0\le i<m$, let $\hat{y}_i=\operatorname{sign}\langle z_i,\hat{v}(\sigma_{s_i})\rangle_{\RR}$, with
  $\operatorname{sign}0=1$, move to the child $s_{i+1}=\cat{s_i}{\iota}$ whose label $y_i=\theta(2\iota-1)$ equals $-\hat{y}_i$, and let $z_{i+1}=z_i+y_i\,\hat{v}(\sigma_{s_i})$.
  After $m$ steps we reach a branch $\eta$, and its leaf $\hat{u}_\eta=\hat{u}(\rho_\eta)$ satisfies $y_i\langle\hat{u}_\eta,\hat{v}(\sigma_{s_i})\rangle_{\RR}\ge\delta/2$ for every $i<m$.
  Summing over the steps, we obtain $\langle\hat{u}_\eta,z_m\rangle_{\RR}\ge m\delta/2$, while $\|z_m\|^2\le\sum_i\|\hat{v}(\sigma_{s_i})\|^2\le2mR_v^2$, because each step adds $2y_i\langle z_i,\hat{v}(\sigma_{s_i})\rangle_{\RR}\le0$ to the cross term.
  By the Cauchy--Schwarz inequality, we have $m\delta/2\le\sqrt2R_u\cdot\sqrt{2m}\,R_v$, that is, $m\le16(R_uR_v/\delta)^2$.

  For $F$ and $[A=B]$, both radii $R_u$ and $R_v$ are at most $1$ (see \cref{sec:swap} and \cref{prop:comparison}).
  Since $\varphi_{P_\pm}=\frac12(1\pm\varphi_F)$, every $\delta$-tree for $P_\pm$ is a $2\delta$-tree for $F$.
\end{proof}

\begin{remark}[Node-dependent thresholds]\label{rem:sfat}
  The proof of \cref{thm:perceptron} uses the threshold only through the probe vectors $\hat{v}(\sigma_s)$, and the bound $|\gamma|\le R_uR_v$ holds at each node separately, since every node has leaves on both sides of its threshold.
  The proof therefore applies without change to trees in which the threshold $\gamma_s$ depends on the node $s$.
  Consequently, for a Hilbert-factorable observable, the response class
  $\{\sigma\mapsto\varphi_C(\rho,\sigma):\rho\in\fD(H_1)\}$
  has sequential fat-shattering dimension at most $16(R_uR_v/\delta)^2$
  at scale $\delta$ in the sense of Rakhlin, Sridharan and Tewari~\cite{Rakhlin-Sridharan-Tewari:2015}, that is,
  $\delta$-trees for $C$ with node-dependent thresholds have height at most this number.
  The class is therefore learnable in the supervised online setting with absolute loss, the absolute prediction error, and sublinear regret, with bounds independent of the dimension.
  Here sublinear regret means that the expected excess cumulative loss over the best fixed response in hindsight, divided by the number of rounds, has a worst-case upper bound tending to zero.
\end{remark}

%#
%### SUBSECTION: Affine approximation versus exponential growth
%#
\subsection{Affine approximation versus exponential growth}\label{sec:linear-exp}
Affine approximation, the subject of \cref{sec:simulation}, and the growth of adaptive discrimination are different phenomena, and the second is governed by the stronger classification.
The following table sets the statements proved here side by side.
\begin{center}
  \renewcommand{\arraystretch}{1.3}
  \begin{tabular}{p{3.6cm}p{5.1cm}p{4.6cm}}
    \hline
    \textbf{Mathematical question} & \textbf{Hypothesis and conclusion} & \textbf{Scope or obstruction} \\
    \hline
    approximation of idealized responses & stability: uniform limits of \emph{convex} combinations of actual responses, that is, responses of finite mixtures, with Bob idealized first (see \cref{thm:polyapprox}) & for an unstable observable, some Bob-first response has positive distance from all actual responses; for the order projection this uniform distance is $1$, even on normal probes (see \cref{exa:nomixture}) \\
    transcripts realized by $m$ adaptive expectation queries at margin $\delta$ & uniform stability: at most $\sum_{i=0}^{d_C(\delta)}\binom mi=O(m^{d_C(\delta)})$ transcripts at every fixed margin (see \cref{thm:growth}) & failure of uniform stability: $2^m$ transcripts at some fixed margin for every $m$; qualitative stability alone does not exclude this behavior \\
    quantitative bounds from Hilbert-space factorization & for factors $u$ and $v$ on states bounded by $1$: tree dimension at most $16/\delta^2$, and $4/\delta^2$ for the swap test and Hong--Ou--Mandel coincidences (see \cref{thm:perceptron}) & for a linear and a conjugate-linear factor of the bilinear form, both bounded by $1$: mixtures of $\lceil4/\varepsilon^2\rceil$ terms suffice at accuracy $\varepsilon$, uniformly over all probes and independently of dimension (see \cref{thm:montecarlo}) \\
    \hline
  \end{tabular}
\end{center}
The approximating combinations in the first row are convex, with coefficients that are probabilities;
expectations preserve these combinations because they are affine in the state (see \cref{rem:degree-one}).
The polynomial upper bound in the second row has exponent $d_C(\delta)$, which need not be one:
for a positive integer $n$, the comparison $\mathrm{GT}_n$ of \cref{sec:outlook}, the projection onto the event that, on two registers of $n$ qubits read as binary numbers, Alice’s number exceeds Bob’s, has tree dimension $d_{\mathrm{GT}_n}(1)\ge n-1$.
In the Hilbert-factorable case, linearity enters twice more:
idealized responses are inner products with a fixed vector, and tree heights are bounded by a linear classifier.
The factor maps in the tree bound need only be bounded on states;
the mixture bound assumes the stated linear and conjugate-linear factors on trace-class operators.
Mixture terms may repeat, and their number bounds the size of the support;
neither this number nor the query count is a bound on measurement shots.
The first two rows concern different notions.
The sector comparison of \cref{exa:sector} is stable and therefore admits convex simulation of idealized responses;
it is not uniformly stable, however, and its growth at margin $1$ is $2^m$.

%#
%### SUBSECTION: Learning: Littlestone’s mistake bound
%#
\subsection{Learning: Littlestone’s mistake bound}\label{sec:learn}
The $\delta$-trees of \cref{def:trees} also govern mistake bounds in online learning~\cite{Littlestone:1988}.
The predictions concern thresholded expectations;
estimating these expectations from repeated preparations is a separate statistical task, whose measurement cost is not counted by the mistake bound.
Fix a threshold $\gamma$ and a margin $\delta>0$, and consider the following game.
Alice prepares an unknown state $\rho$.
In each round, Bob chooses a probe $\sigma$ for which $|\varphi_C(\rho,\sigma)-\gamma|\ge\delta/2$;
a learner predicts whether $\varphi_C(\rho,\sigma)$ lies above or below $\gamma$, and is then told the answer.
Let $\mist_C(\gamma,\delta)$ be the least integer
$m$ such that some deterministic learner makes at most $m$ mistakes, for every $\rho\in\fD(H_1)$ and every sequence of probes, or $\infty$ if there is no such integer, and let $\mist_C(\delta)=\sup_\gamma\mist_C(\gamma,\delta)$.
Let $d_C(\gamma,\delta)$ be the supremum of the heights of the $\delta$-trees for $C$ with threshold $\gamma$ and sign $\theta=1$.
Since reflecting a tree, that is, exchanging the two children at every node, exchanges $\theta=1$ and $\theta=-1$, we have $\sup_\gamma d_C(\gamma,\delta)=d_C(\delta)$.

\begin{proposition}[Mistake bound equals tree dimension]\label{prop:littlestone}
  For every self-adjoint $C\in B(H_1\otimes H_2)$, every real number
  $\gamma$ and every $\delta>0$, the mistake bound $\mist_C(\gamma,\delta)$ equals $d_C(\gamma,\delta)$.
  Consequently, the observable $C$ is uniformly stable if and only if $\mist_C(\delta)$ is finite for every $\delta>0$, and then $\mist_C(\delta)=d_C(\delta)$.
\end{proposition}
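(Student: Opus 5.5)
This is Littlestone's theorem applied to the class of thresholded responses: the hypotheses are the maps $\sigma\mapsto$ (side of $\gamma$ on which $\varphi_C(\rho,\sigma)$ lies), restricted to the probes at margin $\delta/2$ from $\gamma$. We prove the two inequalities separately and then take suprema.

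\emph{Lower bound $\mist_C(\gamma,\delta)\ge d_C(\gamma,\delta)$.} Fix a deterministic learner and a $\delta$-tree of height $n$ with threshold $\gamma$ and sign $\theta=1$. Bob, as adversary, starts at the root. At node $s$ he presents the probe $\sigma_s$ and declares as answer the side opposite to the learner's prediction, moving to the corresponding child $\cat{s}{\iota}$. After $n$ rounds he reaches a leaf $\eta$, and Alice's state is taken to be $\rho_\eta$. The tree inequalities guarantee two things at once. Every probe was admissible, since $|\varphi_C(\rho_\eta,\sigma_s)-\gamma|\ge\delta/2$. Every declared answer was correct for $\rho_\eta$. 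The learner is deterministic and its predictions depend only on the past history, which coincides for all leaves below $s$, so the adversary's choices are consistent. The learner therefore makes $n$ mistakes. This gives the lower bound, including the case where $d_C(\gamma,\delta)$ is infinite.

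\emph{Upper bound.} Assume $d=d_C(\gamma,\delta)<\infty$ and use the Standard Optimal Algorithm. The learner maintains the version space $\mathcal{R}\subseteq\fD(H_1)$ of preparations consistent with all answers so far; initially $\mathcal{R}=\fD(H_1)$. For $\mathcal{R}$ we use the quantity $d(\mathcal{R})$ of \cref{sec:growth} with this $\gamma$ and with $\theta=1$. Given a probe $\sigma$, split the consistent preparations that are admissible for $\sigma$ into $\mathcal{R}_1$, where $\varphi_C(\rho,\sigma)\ge\gamma+\delta/2$, and $\mathcal{R}_0$, where $\varphi_C(\rho,\sigma)\le\gamma-\delta/2$. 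The learner predicts the side $\iota$ with the larger $d(\mathcal{R}_\iota)$. As in the proof of \cref{thm:growth}, $d(\mathcal{R}_0)$ and $d(\mathcal{R}_1)$ cannot both be at least $d(\mathcal{R})$: otherwise two trees with leaves in $\mathcal{R}_0$ and $\mathcal{R}_1$, placed above $\sigma$, would form a taller tree with leaves in $\mathcal{R}$. Hence on a mistake the new version space has strictly smaller $d$. The true $\rho$ always stays in the version space, so $d\ge0$ while the game continues, and there are at most $d+1$ values of $d$ below the initial one. This crude count allows one mistake too many.

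The key delicate step is obtaining exactly $d$ mistakes rather than $d+1$. The extra mistake comes from the last level: when $d(\mathcal{R})=0$ we must rule out a further error. A single consistent leaf $\rho$ makes a tree of height $0$, so $d(\mathcal{R})=0$ means $\mathcal{R}$ is nonempty. If a mistake were possible there, both $\mathcal{R}_0$ and $\mathcal{R}_1$ would be nonempty, and two such leaves above $\sigma$ would form a tree of height $1$, contradicting $d(\mathcal{R})=0$. So at $d=0$ only one side is populated, the learner predicts it, and it is correct. Mistakes therefore occur only while $d\ge1$, each strictly lowering $d$, giving at most $d$ mistakes. Ties are handled by predicting either side. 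The sign convention $\theta=1$ matters here, because the learner's labels are ``above/below'' rather than tree directions; with $\theta=1$, label $1$ means ``above''.

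\emph{Consequence.} Take the supremum over $\gamma$ and use $\sup_\gamma d_C(\gamma,\delta)=d_C(\delta)$ to obtain $\mist_C(\delta)=d_C(\delta)$. By \cref{thm:uniform}(1)$\Leftrightarrow$(2), $C$ is uniformly stable exactly when $d_C(\delta)$ is finite for every $\delta>0$, which is exactly when $\mist_C(\delta)$ is finite for every $\delta>0$.
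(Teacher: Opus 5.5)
Your proof is correct and follows the paper's argument. For the lower bound, the adversary walks down a tree against the learner's predictions; for the upper bound, Littlestone's standard optimal algorithm uses the observation from the proof of \cref{thm:growth} that $d(\mathcal{R}_0)$ and $d(\mathcal{R}_1)$ cannot both equal $d(\mathcal{R})$; and \cref{thm:uniform} gives the consequence.

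Your ``delicate step'' is unnecessary, because the crude count does not lose a mistake. After each mistake the new version space still contains the true preparation, so its value of $d$ is at least $0$. Hence $j$ strict decreases starting from $d_C(\gamma,\delta)$ already force $j\le d_C(\gamma,\delta)$, which is exactly how the paper concludes. Your separate treatment of the level $d(\mathcal{R})=0$ is valid but only re-derives this.
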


\begin{proof}
  If there is a tree of height $n$, an adversary walks down it, choosing at each node the child that contradicts the learner’s prediction;
  the leaf reached is a preparation consistent with all answers, so every learner makes $n$ mistakes.
  If $d_C(\gamma,\delta)=\infty$, these lower bounds already give $\mist_C(\gamma,\delta)=\infty$.
  For the converse bound, assume that $d_C(\gamma,\delta)<\infty$ and run Littlestone’s standard optimal algorithm:
  keep the set $V$ of preparations consistent with the answers so far;
  for a probe $\sigma$, let $V^1$ and $V^0$ be the sets of elements of $V$ above $\gamma+\delta/2$ and below $\gamma-\delta/2$, and predict the side $\iota$ for which $d(V^\iota)$ is larger, in the notation of \cref{sec:growth}.
  As in the proof of \cref{thm:growth}, the numbers $d(V^0)$ and $d(V^1)$ cannot both equal $d(V)$, so after a mistake the new consistent set $V^{1-\iota}$ satisfies $d(V^{1-\iota})<d(V)$.
  Since the true preparation stays in $V$, we have $d(V)\ge0$ throughout, and there are at most $d_C(\gamma,\delta)$ mistakes.
  The second statement follows from \cref{thm:uniform}.
\end{proof}

By \cref{thm:trees,prop:littlestone}, for an unstable observable the mistake bound in the threshold game is infinite at some margin, and the sector comparison of \cref{exa:sector} shows that stability alone does not guarantee a finite mistake bound.
The threshold game is a restricted form of online learning:
general online learnability of real-valued classes allows thresholds that vary from probe to probe (see the historical remarks below and \cref{ques:sfat}).
In the Hilbert-factorable case the restriction is immaterial:
by \cref{rem:sfat}, the perceptron of \cref{thm:perceptron} learns online, even when the thresholds vary and in every dimension, with at most
$16(R_uR_v/\delta)^2$ mistakes, where $R_u$ and $R_v$ bound the norms of the two factors $u$ and $v$ of the expectation.

%#
%### SUBSECTION: Historical remarks
%#
\subsection{Historical remarks}\label{sec:hist-trees}
Stability in a cardinal $\lambda$, which extends Morley’s
$\omega$-stability (stability in $\aleph_0$)~\cite{Morley:1965} to uncountable languages,
was introduced by Rowbottom~\cite{Rowbottom:1964} in his work on the Łoś conjecture for uncountable theories,
the conjecture that a complete theory with exactly one model, up to isomorphism, in some cardinal larger than the cardinality of its language has exactly one in every such cardinal.
Shelah studied the stability spectrum, the class of cardinals in which a theory is stable, and related it to the order property~\cite{Shelah:1971b,Shelah:1990a}.
For Banach space structures, the stability spectrum theorem was proved by Iovino in his dissertation~\cite{Iovino:1994a}.
Binary trees of formulas and the associated rank are due to Shelah~\cite{Shelah:1971b,Shelah:1990a},
and the equivalence between bounded ladders, bounded trees and stability of a formula in all models of a theory, which underlies \cref{thm:uniform}, is part of classical stability theory~\cite{Shelah:1990a}.
For Banach spaces, the stability of all ultrapowers is Raynaud’s superstability~\cite{Raynaud:1983a}, the analytic counterpart of condition~(3) of \cref{thm:uniform};
the passage from ladders to trees in \cref{prop:ladders-to-trees} adapts Proposition~2.3.1 of Iovino’s notes on stability theory~\cite{Iovino:0000e}, where it is proved for first-order formulas, with the gap
$\delta$ given by the failure of the double limit property in place of the negation of a formula;
in \cref{thm:uniform}, ultrapowers take the place of compactness.
Quantitative bounds in the converse direction, from trees back to ladders, come from a Ramsey argument for trees due to Hodges~\cite{Hodges:1993};
Conant and Terry give explicit bounds for real-valued functions~\cite{Conant-Terry:2026}.
The counting bound of \cref{thm:growth} is the tree analogue of the Sauer--Shelah lemma~\cite{Sauer:1972,Shelah:1972b};
versions for Littlestone classes appear in the work of Ben-David, Pál and Shalev-Shwartz~\cite{BenDavid-Pal-ShalevShwartz:2009} and of Rakhlin, Sridharan and Tewari~\cite{Rakhlin-Sridharan-Tewari:2015}.
Littlestone introduced the dimension and the optimal algorithm that bear his name~\cite{Littlestone:1988};
for bounded real-valued classes, supervised online learnability with absolute loss and sublinear regret is characterized by finiteness of the sequential fat-shattering dimension at every scale~\cite{Rakhlin-Sridharan-Tewari:2015},
which allows node-dependent thresholds (see \cref{ques:sfat}).
The link between stability and Littlestone dimension in model theory is due to Chase and Freitag~\cite{Chase-Freitag:2019}.
The bound in \cref{thm:perceptron} is Novikoff’s mistake bound for the perceptron~\cite{Novikoff:1962,Block:1962}, read as a bound on tree height.
For $n$-qubit states probed by two-outcome measurements, Aaronson, Chen, Hazan, Kale and Nayak showed that the sequential fat-shattering dimension is $O(n)$ at each fixed scale~\cite{Aaronson-Chen-Hazan-Kale-Nayak:2018}.

%#
%## SECTION: Order: stable observables are closed under norm limits and orthogonal sums
%#
\section{Order: stable observables are closed under norm limits and orthogonal sums}\label{sec:lattice}

\noindent We now determine how stability behaves under operations on bounded operators and on observables.
We show that stable bounded operators form a norm-closed operator system, and that the same is true of uniformly stable bounded operators.
The self-adjoint members of these systems are the stable and uniformly stable observables, respectively.
Stable projections are closed under orthocomplements and orthogonal sums, but two uniformly stable projections can have an unstable meet.
Stability need not survive strong operator limits either.
These examples distinguish linear and norm closure from closure under the lattice operations of quantum logic.

%#
%### SUBSECTION: Topological and algebraic closure
%#
\subsection{Topological and algebraic closure}
A \emph{norm-closed operator system} in $B(H_1\otimes H_2)$ is a self-adjoint linear subspace that contains $I$ and is closed in the norm topology.

\begin{proposition}[Topological and algebraic closure]\label{prop:operatorsystem}
  The set $\cO$ of stable bounded operators is a norm-closed operator system in $B(H_1\otimes H_2)$.
  It contains the minimal tensor product $B(H_1)\otimes_{\min}B(H_2)$ and every bounded operator whose expectation factors through a Hilbert space.
  The same holds for the set of uniformly stable bounded operators.
\end{proposition}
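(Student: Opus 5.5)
For the stable operators $\cO$, the plan is to check each closure property directly from \cref{lem:preservation} and the observation $\varphi_{C^*}=\overline{\varphi_C}$ on density operators.
\emph{Linearity:} $\varphi_{\alpha C+\beta C'}=\alpha\varphi_C+\beta\varphi_{C'}$, and \cref{lem:preservation}(a) makes linear combinations of stable functions stable.
\emph{Adjoints:} complex conjugation commutes with the limits in \cref{def:stable}, so $\overline{\varphi_C}$ is stable whenever $\varphi_C$ is.
\emph{Identity:} $\varphi_I\equiv1$ is a separated sum, hence stable.
\emph{Norm closure:} $|\varphi_C-\varphi_{C'}|\le\|C-C'\|$, so a norm limit of stable operators has expectation a uniform limit of stable functions, stable by \cref{lem:preservation}(b).
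Containment of the minimal tensor product is \cref{thm:stable}(b), and the Hilbert-factorable operators are covered by \cref{prop:hilbertfactor}.

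For the uniformly stable operators, I would not work with ladders directly but use condition (3) of \cref{thm:uniform}, namely stability of $\Phi^\cU_C$ for every nonprincipal $\cU$.
The main obstacle is that \cref{def:uniform} defines uniform stability of a non-self-adjoint $C$ through its hermitian parts, whereas \cref{thm:uniform} is stated only for self-adjoint $C$.
So I would first prove the following lemma: for any bounded $C$, uniform stability is equivalent to stability of $\Phi^\cU_C$ for every $\cU$.
The key identity is $\Phi^\cU_C=\Phi^\cU_{\operatorname{Re}C}+i\,\Phi^\cU_{\operatorname{Im}C}$, where $\operatorname{Re}C$ and $\operatorname{Im}C$ denote the two hermitian parts; it holds because ultralimits are linear.
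The forward direction then follows from \cref{lem:preservation}(a).
The converse follows by taking real and imaginary parts: since $\Phi^\cU$ of a self-adjoint operator is real, we have $\Phi^\cU_{\operatorname{Re}C}=\operatorname{Re}\Phi^\cU_C$, and taking real parts preserves the double limit property.

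With this characterization the uniform case follows by the same steps as above.
\emph{Linearity:} $\Phi^\cU$ is linear in $C$, so \cref{lem:preservation}(a) applies.
\emph{Adjoints:} $\Phi^\cU_{C^*}=\overline{\Phi^\cU_C}$.
\emph{Identity:} $\Phi^\cU_I\equiv1$.
\emph{Norm closure:} $\sup|\Phi^\cU_C-\Phi^\cU_{C'}|\le\|C-C'\|$, so \cref{lem:preservation}(b) applies.
The two containments reduce to self-adjoint pieces covered by \cref{prop:uniform-examples}.
An operator $C$ in the minimal tensor product has hermitian parts in the minimal tensor product, because that algebra is closed under adjoints.
An operator with a Hilbert factorization $\varphi_C=\langle u,v\rangle$ has hermitian parts that factor through $\cH\oplus\cH$.
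Concretely, $\operatorname{Re}\varphi_C=\tfrac12(\langle u,v\rangle+\langle v,u\rangle)$ is the inner product of $(u,v)/\sqrt2$ with $(v,u)/\sqrt2$, and the imaginary part is handled similarly.
Alternatively, one can argue directly on $\Phi^\cU_C$ for general $C$ as in the proof of \cref{prop:uniform-examples}, since that argument never used self-adjointness.
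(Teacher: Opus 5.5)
Your proof is correct and follows the paper's route: \cref{lem:preservation}(a) and~(b) applied to $\varphi_C$ and to $\Phi^\cU_C$, together with \cref{thm:stable}, \cref{prop:hilbertfactor} and \cref{prop:uniform-examples}. Your lemma extending condition~(3) of \cref{thm:uniform} to non-self-adjoint $C$ through its hermitian parts spells out a reduction that the paper's short proof leaves implicit.

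There is one slip. The pair $(u,v)/\sqrt2$ and $(v,u)/\sqrt2$ is not a factorization, because each vector then depends on both $\rho$ and $\sigma$. A correct choice is $\rho\mapsto(u(\rho),\overline{u(\rho)})/\sqrt2$ and $\sigma\mapsto(v(\sigma),\overline{v(\sigma)})/\sqrt2$ in $\cH\oplus\overline{\cH}$. In any case, your alternative suffices: run the proof of \cref{prop:uniform-examples} directly on $\Phi^\cU_C$, which never uses self-adjointness, and then invoke your lemma.
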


\begin{proof}
  The identity $I=I\otimes I$ is stable, and for density operators we have $\varphi_{C^*}=\overline{\varphi_C}$.
  Linear combinations and norm limits are handled by \cref{lem:preservation}(a) and~(b), and for uniform stability the same lemma applies to the ultrapower expectation $\Phi^\cU_C$ of \cref{sec:ultra}.
  The last two statements are \cref{thm:stable}, \cref{prop:hilbertfactor} and \cref{prop:uniform-examples}.
\end{proof}

Ordered by inclusion of ranges, with $I-P$ as the orthocomplement of $P$, the projections on $H_1\otimes H_2$ form an \emph{orthomodular poset}:
any two mutually orthogonal projections $P$ and $Q$, that is, projections with $PQ=0$, have a join, namely $P+Q$, and whenever $P\le Q$, the projection $Q$ is the join of $P$ and a projection orthogonal to $P$, namely $Q-P$;
this last property is the \emph{orthomodular law}.

\begin{corollary}[Orthocomplements and orthogonal sums]\label{cor:orthoposet}
  The stable projections form an orthomodular poset:
  if a projection $P$ on $H_1\otimes H_2$ is stable, then so is $I-P$, and if two stable projections $P$ and $Q$ on $H_1\otimes H_2$ satisfy $PQ=0$, then the projection $P+Q$ is stable.
\end{corollary}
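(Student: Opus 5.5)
The plan is to obtain both assertions directly from \cref{prop:operatorsystem}, which says that the set $\cO$ of stable bounded operators is a norm-closed operator system. Such a set is in particular a complex linear subspace of $B(H_1\otimes H_2)$ that contains $I$. Everything in the corollary is then closure of $\cO$ under a specific linear combination.

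\emph{Orthocomplement.} Let $P$ be a stable projection. Then $I-P$ is a linear combination of $I\in\cO$ and $P\in\cO$, so it lies in $\cO$. It is a projection because $(I-P)^2=I-2P+P^2=I-P$ and $(I-P)^*=I-P$.

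\emph{Orthogonal sums.} Let $P$ and $Q$ be stable projections with $PQ=0$. Taking adjoints gives $QP=(PQ)^*=0$. Hence $(P+Q)^2=P^2+PQ+QP+Q^2=P+Q$, and $P+Q$ is self-adjoint, so it is a projection. It lies in $\cO$ because $\cO$ is closed under addition. Its range is the orthogonal sum of the ranges of $P$ and $Q$, so it is their join in the poset of all projections. Since that join is itself stable, it is also their join among the stable projections.

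\emph{Orthomodular law.} This is the only point needing a word of care. Suppose $P\le Q$ are stable projections. Then $Q-P$ is stable by linearity of $\cO$, and it is a projection orthogonal to $P$, since $PQ=QP=P$. Therefore $Q=P+(Q-P)$ exhibits $Q$ as the join of $P$ and a stable projection orthogonal to $P$.

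No step is a genuine obstacle. The only thing to note is that the order, joins and orthocomplements on the stable projections are those inherited from all projections. With that, the orthomodular poset axioms for the stable projections follow from those for all projections together with the closure properties just checked.
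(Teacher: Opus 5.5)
Your proposal is correct and follows essentially the same route as the paper. Both arguments treat $I-P$, $P+Q$ (when $PQ=0$) and $Q-P$ (when $P\le Q$) as linear combinations of stable operators, conclude their stability from \cref{prop:operatorsystem}, and inherit the orthomodular law from all projections; your extra checks that these operators are projections and that joins among the stable projections coincide with joins among all projections make explicit what the paper leaves implicit.
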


\begin{proof}
  If the projections $P$ and $Q$ are stable, then the projections $I-P$, $P+Q$ (when $PQ=0$) and $Q-P$ (when $P\le Q$) are linear combinations of stable observables, and hence they are stable by \cref{prop:operatorsystem}.
  Thus the stable projections are closed under orthocomplements and orthogonal sums, and the orthomodular law, valid for all projections on $H_1\otimes H_2$, holds among them.
\end{proof}

%#
%### SUBSECTION: The boundary of closure: meets and strong limits
%#
\subsection{The boundary of closure: meets and strong limits}
Closure first fails at meets.
The meet $P_1\wedge P_2$ of two projections, the quantum-logical “and”, is the projection onto the intersection of their ranges;
in \cref{prop:meet} below, we construct two uniformly stable projections whose meet is unstable.

\begin{proposition}[The boundary: meets]\label{prop:meet}
  If $H_1$ and $H_2$ are infinite-dimensional, then there are projections $P_1$ and $P_2$ in $B(H_1)\otimes_{\min}B(H_2)$, both uniformly stable, whose meet $P_1\wedge P_2$ is unstable.
\end{proposition}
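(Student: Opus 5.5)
We plan to realize the order projection of \cref{thm:unstable} as the meet of a product projection and the graph projection of an operator in the minimal tensor product. Since $H_1$ is infinite-dimensional, we identify $H_1$ with $K\otimes\CC^2$ for an infinite-dimensional separable $K$, with $\CC^2$ having basis $\epsilon_1,\epsilon_2$. This identification is unitary and acts on Alice's factor only, so it preserves membership in the minimal tensor product, stability and uniform stability. Then $H_1\otimes H_2\cong(K\otimes H_2)\otimes\CC^2$.

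Fix an orthonormal sequence $(e_k)$ in $K$ with closed span $E$, and an orthonormal basis $(f_l)$ of $H_2$. For each $k$, let $R_k$ be the projection onto the closed span of $\{f_l:l>k\}$. Define the positive operator
\[
  T=\sum_k2^{-k}|e_k\rangle\langle e_k|\otimes(I-R_k)+(I-E)\otimes I
\]
on $K\otimes H_2$. The sum converges in norm, so $T\in B(K)\otimes_{\min}B(H_2)$. Because the summands act on mutually orthogonal subspaces $e_k\otimes H_2$ and $(I-E)K\otimes H_2$, the kernel of $T$ is exactly the closed span of $\{e_k\otimes f_l:l>k\}$, the range of the order projection $P$.

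We take $P_1$ to be the projection onto $K\otimes H_2\otimes\epsilon_1$, which is $(I_K\otimes|\epsilon_1\rangle\langle\epsilon_1|)\otimes I_{H_2}$, a product and hence in the minimal tensor product. We take $P_2$ to be the projection onto the graph $\{\xi\otimes\epsilon_1+T\xi\otimes\epsilon_2\}$. With respect to the $\CC^2$ decomposition, $P_2$ is the $2\times2$ matrix
\[
  \begin{pmatrix}(1+T^2)^{-1}&(1+T^2)^{-1}T\\ T(1+T^2)^{-1}&T^2(1+T^2)^{-1}\end{pmatrix}.
\]
Its entries are continuous functions of $T$, so they lie in the C$^*$-algebra $B(K)\otimes_{\min}B(H_2)$. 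After absorbing the matrix units into Alice's factor, $P_2\in B(H_1)\otimes_{\min}B(H_2)$. Both projections are then uniformly stable by \cref{prop:uniform-examples}.

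For the meet, a graph vector lies in $K\otimes H_2\otimes\epsilon_1$ exactly when $T\xi=0$. Hence $P_1\wedge P_2$ is $P\otimes|\epsilon_1\rangle\langle\epsilon_1|$, and its expectation on the pure states $e_k\otimes\epsilon_1$ and $f_l$ reproduces the staircase of \cref{thm:unstable}. The meet is therefore unstable.

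The step that needs the most care is checking that $P_2$ really lies in the minimal tensor product. We will verify the graph-projection formula, and then use that continuous functional calculus stays inside a C$^*$-subalgebra and that $M_2(B(K)\otimes_{\min}B(H_2))$ equals $(B(K)\otimes M_2)\otimes_{\min}B(H_2)$. The kernel computation, including the complement of $E$ and the indexing convention for $R_k$, is routine but must match the order projection exactly.
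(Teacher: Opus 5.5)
Your proof is correct, and it takes a different route from the paper's. The paper does not reshape Alice's space. It picks fresh orthonormal directions $e'_k\otimes f'_l$ and tilts each $e_k\otimes f_l$ with $k\ge l$ towards $e'_k\otimes f'_l$ by the angles $\pm1/k$. It takes $P_1$ and $P_2$ to be the projections onto $\operatorname{ran}P\oplus\overline{\operatorname{span}}\{n_{kl}\}$ and $\operatorname{ran}P\oplus\overline{\operatorname{span}}\{n'_{kl}\}$. Block-diagonality gives $P_1\wedge P_2=P$. Each $P_i$ differs from the product projection $Q_1\otimes Q_2$ by an orthogonal sum of rank-two operators of norm $\sin(1/k)\to0$, hence by a compact operator, and this places it in the minimal tensor product without any functional calculus.

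The two constructions rest on the same mechanism. The paper's ranges are the graphs of $\pm D$ for a compact $D$ whose kernel is $\operatorname{ran}P$; yours are the graphs of $T$ and of $0$. What differs is how membership in $B(H_1)\otimes_{\min}B(H_2)$ is certified.

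Your version needs more machinery: the splitting $H_1\cong K\otimes\CC^2$, the graph-projection formula, closure of a C$^*$-algebra under continuous functional calculus, and the identification of $M_2$ over the minimal tensor product with $B(K\otimes\CC^2)\otimes_{\min}B(H_2)$. In return it isolates a general principle. After doubling Alice's space, the projection onto $\ker T\otimes\epsilon_1$, for any $T$ in $B(K)\otimes_{\min}B(H_2)$, is the meet of a product projection and a projection of the minimal tensor product.

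The only problem-specific step is then your reweighting by $2^{-k}$. It turns the strongly convergent series for $I-P$, which is $T$ without the weights, into a norm-convergent one with the same kernel. Your construction also exhibits the non-closed sum of ranges that the paper notes after its proof: $\operatorname{ran}P_1+\operatorname{ran}P_2$ is closed only if $\operatorname{ran}T$ is, and it is not, because the eigenvalues $2^{-k}$ accumulate at $0$.
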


\begin{proof}
  The idea is to hide the order projection inside the intersection of two subspaces that are compact perturbations of a product projection, by tilting each matrix entry into a fresh direction through a small angle, in opposite senses for the two subspaces.
  Let $(e_k)_{k\ge1}$ and $(e'_k)_{k\ge1}$ be sequences in
  $H_1$ whose terms together form an orthonormal set, let
  $(f_l)_{l\ge1}$ and $(f'_l)_{l\ge1}$ be such sequences in $H_2$, and let $P$ be the order projection built from $(e_k)$ and $(f_l)$, the projection onto the closed span of the vectors $e_k\otimes f_l$ with $k<l$.
  For $k\ge l$, let $\zeta_k=1/k$ and
  \[
    n_{kl}=\cos\zeta_k\,e_k\otimes f_l+\sin\zeta_k\,e'_k\otimes f'_l,\qquad
    n'_{kl}=\cos\zeta_k\,e_k\otimes f_l-\sin\zeta_k\,e'_k\otimes f'_l,
  \]
  and let $P_1$ and $P_2$ be the projections onto the closed subspaces $\operatorname{ran}P\oplus\overline{\operatorname{span}}\{n_{kl}:k\ge l\}$ and $\operatorname{ran}P\oplus\overline{\operatorname{span}}\{n'_{kl}:k\ge l\}$.
  The planes spanned by $e_k\otimes f_l$ and $e'_k\otimes f'_l$ are mutually orthogonal and orthogonal to the range of $P$, and in each of them the lines through $n_{kl}$ and $n'_{kl}$ meet only in $0$, since $0<\zeta_k<\pi/2$.
  Both subspaces are block diagonal for this decomposition, so their intersection is the range of $P$, that is, $P_1\wedge P_2=P$, which is unstable by \cref{thm:unstable}.

  Let $Q_1$ and $Q_2$ be the projections onto the closed spans of $(e_k)$ and $(f_l)$.
  The operator $P_1-Q_1\otimes Q_2$ is the orthogonal sum over $k\ge l$ of the rank-two operators $|n_{kl}\rangle\langle n_{kl}|-|e_k\otimes f_l\rangle\langle e_k\otimes f_l|$, of norm $\sin\zeta_k$.
  For every $\varepsilon>0$, only finitely many pairs $k\ge l$ have $1/k\ge\varepsilon$, so this sum is compact.
  Since $Q_1\otimes Q_2$ and the compact operators lie in $B(H_1)\otimes_{\min}B(H_2)$, so does $P_1$, and likewise $P_2$.
  Conclude with \cref{thm:stable} and \cref{prop:uniform-examples}.
\end{proof}

By von Neumann’s alternating projection theorem, the meet is the strong limit $P_1\wedge P_2=\lim_{m\to\infty}(P_1P_2)^m$~\cite{Deutsch:2001}.
The convergence is in norm if and only if the sum of the ranges of $P_1$ and $P_2$ is closed~\cite{Deutsch:2001}.
In \cref{prop:meet}, the angles $\zeta_k$ accumulate at $0$, so the sum of the ranges is not closed.
Since $B(H_1)\otimes_{\min}B(H_2)$ is an algebra, the operators $(P_1P_2)^m$, for positive integers $m$, are stable, and their strong limit $P_1\wedge P_2$ is unstable.
Thus stability passes to norm limits (see \cref{lem:preservation}(b)), but not to strong limits.

%#
%## SECTION: Outlook: registers of growing size
%#
\section{Outlook: registers of growing size}\label{sec:outlook}

\noindent We indicate, finally, how the uniform notions of \cref{sec:trees} apply to quantum computation.
There the natural setting is a family of registers of growing size,
each with a readout observable on two copies of the register.
Deep computations, in the sense of~\cite{Alva-Duenez-Iovino-Walton:2024,Duenez-Iovino-MatosWiederhold-Salvetti-Tall:2026}, are idealized as ultralimits of the outputs of longer and longer compositions of quantum channels,
the linear maps that send density operators to density operators even when they act on one part of a larger system.

To make this precise, for every positive integer $n$, let $H^{(n)}=(\CC^2)^{\otimes n}$ be the register of $n$ qubits, or two-level systems, and let $C_n$ be a readout observable on $H^{(n)}\otimes H^{(n)}$;
the outputs of deep computations are the ultralimit states
$\ulimit{k}{\cU}\mathcal{E}_k\circ\dots\circ\mathcal{E}_1(\rho_0)$, that is, the states
$A\mapsto\ulimit{k}{\cU}\Tr\bigl(\mathcal{E}_k\circ\dots\circ\mathcal{E}_1(\rho_0)\,A\bigr)$, along a nonprincipal ultrafilter
$\cU$ on $\NN$, for an initial density operator $\rho_0$ on a register and a sequence $(\mathcal{E}_k)_{k\in\NN}$ of quantum channels on that register.
On a single register, which is finite-dimensional, such a state is again given by a density operator.
Each $C_n$ acts on a finite-dimensional space and is thus stable (see \cref{cor:compact-and-findim});
the meaningful condition is the uniform stability of the family, in the sense of the paragraph after \cref{thm:uniform}.

Two families illustrate the distinction.
The equality tests $\mathrm{EQ}_n=\sum_x|x\rangle\langle x|\otimes|x\rangle\langle x|$, where
$x=x_1\cdots x_n$ runs over the strings in $\{0,1\}^n$
and $|x\rangle=|x_1\rangle\otimes\dots\otimes|x_n\rangle$
is the vector that $x$ labels in the computational basis of
$H^{(n)}$, the product basis built from a fixed orthonormal basis $|0\rangle,|1\rangle$ of $\CC^2$, factor through $\ell^2$ with constants independent of $n$ (see the proof of \cref{prop:comparison}).
The family is therefore uniformly stable:
its tree dimension at every margin $\delta>0$ is at most
$16/\delta^2$ for all $n$ (see \cref{thm:perceptron}),
so that its growth is polynomial of a degree independent of $n$, and the Monte Carlo bound of \cref{cor:montecarlo} holds, at every accuracy $\varepsilon>0$, with mixtures of $M=\lceil4/\varepsilon^2\rceil$ preparations for all $n$.
The comparisons $\mathrm{GT}_n=\sum_{x>y}|x\rangle\langle x|\otimes|y\rangle\langle y|$, in which the strings
$x$ and $y$ are compared as binary numbers, contain $1$-ladders of length $2^n-1$, hence $1$-trees of height $n-1$ by \cref{prop:ladders-to-trees}:
the family is not uniformly stable, and $n-1$ adaptive probes resolve $2^{n-1}$ basis preparations.
The same dichotomy appears in communication complexity, the study of how many bits two parties holding separate inputs must exchange to compute a function of both:
equality has constant public-coin randomized cost, that is, parties who share random bits and may err with small probability can compute it by exchanging a number of bits independent of
$n$, while Greater-Than on $n$-bit inputs has cost $\Theta(\log n)$~\cite{Kushilevitz-Nisan:1997,Viola:2015}.
In the language of graphs, the communication matrix of Greater-Than, the table of its values on all pairs of inputs, is the biadjacency matrix of a \emph{half-graph}:
its entries form a staircase of $0$s and $1$s, like the values of a ladder in the sense of \cref{def:trees}.
A family of communication problems excludes large Greater-Than instances exactly when its matrices contain no large half-graphs.
Harms, Wild and Zamaraev showed that this exclusion, which is stability, is necessary for constant randomized cost~\cite{Harms-Wild-Zamaraev:2022}, and Harms and Zamaraev proved it sufficient for matrices of sign-rank at most
$3$, those that record the signs of the entries of a real matrix of rank at most $3$, and for unit disk graphs, the intersection graphs of disks of equal radius in the plane~\cite{Harms-Zamaraev:2023}.

%#
%## SECTION: Questions
%#
\section{Questions}\label{sec:questions}

\noindent We close with questions raised by the results above, on the
structure of the operator system of stable bounded operators, on the gap between stability and uniform stability, and on quantitative bounds:
\begin{enumerate}[label=(Q\arabic*)]
\item
  Is every stable observable a norm limit of observables whose slice maps, which send each preparation of Alice’s system to the effective observable that it leaves on Bob’s side, factor through a Hilbert space?
\item
\label{ques:superreflexive} Is a self-adjoint observable $C$ uniformly stable if and only if its slice map $T_C$ factors through a superreflexive Banach space?
  By the discussion after \cref{exa:sector}, factorization through a superreflexive space implies uniform stability.
\item
  Is the operator system $\cO$ of stable bounded operators closed under products, and hence a C$^*$-algebra?
  A positive answer would make the meet of two stable projections whose ranges have a closed sum stable, by the alternating projection theorem of \cref{sec:lattice}.
\item
  Is the meet of two \emph{commuting} stable projections stable?
\item
\label{ques:rates} For a uniformly stable family of readouts on $n$-qubit registers, what is the minimal size of a mixture simulating idealized preparations to precision $\varepsilon$, and when is it independent of $n$?
  For Hilbert-factorable families, \cref{thm:montecarlo} gives a bound independent of $n$.
\item
\label{ques:sfat} The trees of \cref{thm:growth} and \cref{prop:littlestone} have a constant threshold.
  Does uniform stability of a self-adjoint observable $C$ imply that its response class $\{\sigma\mapsto\varphi_C(\rho,\sigma):\rho\in\fD(H_1)\}$ has finite sequential fat-shattering dimension at every scale, with node-dependent thresholds?
  For Hilbert-factorable observables, \cref{rem:sfat} gives a positive answer.
\item
  Is the tree dimension $d_C(\delta)$ of a uniformly stable self-adjoint observable
  $C$, the supremum of the heights of its $\delta$-trees, always bounded by a polynomial in $1/\delta$, as in the Hilbert case, and is the degree of that polynomial related to the Rademacher type of a space through which the slice map $T_C$ factors?
\end{enumerate}

\bibliographystyle{amsalpha}
\bibliography{references}

\end{document}